\newcommand{\Rmnum}[1]{\expandafter\@slowromancap\romannumeral #1@}
\newtheorem{thm}{Theorem}[section]
\newtheorem{lemma}[thm]{Lemma}
\newtheorem{remark}[thm]{Remark}
\newtheorem{theorem}[thm]{Theorem}
\begin{document}

\author{Wei Dai}
\address{Wei Dai
\newline\indent School of Mathematical Sciences \newline\indent Beihang University (BUAA) \newline\indent Beijing 100191, P. R. China \newline\indent and
\newline\indent Key Laboratory of Mathematics, Informatics and Behavioral Semantics \newline\indent Ministry of Education \newline\indent Beijing 100191, P. R. China}
\email{weidai@buaa.edu.cn}

\author{Leyun Wu}
\address{Leyun Wu
\newline\indent Department of Applied Mathematics\newline\indent Hong Kong Polytechnic University\newline\indent Hung Hom, Kowloon, Hong Kong, P. R. China\newline\indent and
\newline\indent School of Mathematical Sciences, MOE-LSC,
\newline\indent Shanghai Jiao Tong University, Shanghai, P. R. China}
\email{leyunwu@126.com}

\title[Uniform boundedness for $n$-th order Lane-Emden system]{Uniform a priori estimates for $n$-th order Lane-Emden system in $\mathbb{R}^{n}$ with $n\geq3$}

\thanks{Wei Dai is supported by the NNSF of China (No. 11971049 and No. 12222102) and the Fundamental Research Funds for the Central Universities, Leyun Wu is supported by the NNSF of China (No. 12031012) and China Postdoctoral Science Foundation (No. 2019M661472).}

\begin{abstract}
In this paper, we establish uniform a priori estimates for positive solutions to the (higher) critical order superlinear Lane-Emden system in bounded domains with Navier boundary conditions in arbitrary dimensions $n\geq3$. First, we prove the monotonicity of solutions for odd order (higher order fractional system) and even order system (integer order system) respectively along the inward normal direction near the boundary by the method of moving planes in local ways. Then we derive uniform a priori estimates by establishing the precise relationships between the maxima of $u$, $v$, $-\Delta u$ and $-\Delta v$ through Harnack inequality. Our results extended the uniform a priori estimates for critical order problems in \cite{KS1,KS2} from $n=2$ to higher dimensions $n\geq3$ and in \cite{CW,DD} from one single equation to system. With such a priori estimates, one will be able to obtain the existence of solutions via topological degree theory or continuation argument.
 \end{abstract}

\subjclass[2010]{Primary: 35B45; Secondary: 35J40, 35J91.}

\keywords{Uniform a priori estimates, critical order Lane-Emden system, Navier problems}

\maketitle

\numberwithin{equation}{section}

\section{Introduction and main results}

\subsection{Background and setting of the problem}

This paper concerns the superlinear (higher) critical order Lane-Emden system
\begin{equation}\label{eq1-1}
\begin{cases}
(-\Delta)^{\frac{n}{2}} u(x)=v^p(x),&x\in\Omega,\\
(-\Delta)^{\frac{n}{2}} v(x)=u^q(x),&x\in\Omega, \\
u(x)=-\Delta u(x)=\cdots =(-\Delta)^{\frac{n-1}{2}}u(x)=0,\\
v(x)=-\Delta v(x)=\cdots =(-\Delta)^{\frac{n-1}{2}}v(x)=0,
&x\in\mathbb{R}^n \backslash \Omega,\,\, \mbox{if}\,\, n\,\, \mbox{is odd};\\
u(x)=-\Delta u(x)=\cdots =(-\Delta)^{\frac{n}{2}-1}u(x)=0,\\
v(x)=-\Delta v(x)=\cdots =(-\Delta)^{\frac{n}{2}-1}v(x)=0,
&x\in\partial \Omega,\,\, \mbox{if}\,\, n\,\, \mbox{is even};\\
\end{cases}
\end{equation}
in a bounded and strictly convex domain $\Omega \subset \mathbb{R}^n$ with smooth boundary, where $n\geq3$, $p\geq 1$, $q\geq 1$ and $pq>1$.

\medskip

If $n\geq 4$ is even, then $(-\Delta)^{\frac{n}{2}}$ is the (higher) integer order operator, we say that $(u,v)$ is a pair of classical solution to \eqref{eq1-1} provided that $(u,v)\in (C^{n}(\Omega)\cap C^{n-2}(\bar \Omega))^2$ satisfies \eqref{eq1-1}.

\smallskip

If $n\geq 3$ is odd, then $(-\Delta)^{\frac{n}{2}}$ is the higher order fractional operator, which can be defined by
\begin{equation}\label{eq1-2}
(-\Delta)^{\frac{n}{2}}u(x):=(-\Delta)^{\frac{1}{2}}\circ (-\Delta)^{\frac{n-1}{2}} u(x).
\end{equation}
To define the classical solution of higher order fractional system \eqref{eq1-1}, we first introduce the definition of fractional Laplacian. Let $0<\sigma<1$, denote
$$
{\mathcal L}_{2\sigma}:=\left\{u\in L_{loc}^{1}(\mathbb{R}^{n}) \, \bigg| \int_{\mathbb{R}^n }\frac{|u(x)|}{1+|x|^{n+2\sigma}}\mathrm{d}x<+\infty\right\}.
$$
For any $u \in C_{loc}^{[2\sigma], \{2\sigma\}+\varepsilon}(\Omega)\cap   {\mathcal{L}}_{2\sigma}$ with arbitrarily small $\varepsilon>0$, the fractional Laplacian is a nonlocal operator given by
\begin{equation*}
\begin{split}
(-\Delta)^\sigma u(x) &= C_{n, \sigma} \, P.V. \int_{\mathbb{R}^n} \frac{u(x)-u(y)}{|x-y|^{n+2\sigma}} \mathrm{d}y\\
 &= C_{n, \sigma} \lim_{\delta \rightarrow 0^{+}}\int_{\mathbb{R}^{n}\backslash B_{\delta}(x)}
\frac{u(x)-u(y)}{|x-y|^{n+2\sigma}}\mathrm{d}y, \qquad \forall x\in\Omega,
\end{split}
\end{equation*}
where P.V. stands for the Cauchy principal value, $[2\sigma]$ is the integer part of $2\sigma$, and $\{2\sigma\}$ is the fractional part of $2\sigma$. Consequently, in order \eqref{eq1-2} to make sense for any $x\in\Omega$, we require that
 $$
 u \in C_{loc}^{n, \varepsilon}(\Omega) \,\, \mbox{ and } \,  (-\Delta)^\frac{n-1}{2} u \in {\mathcal{L}}_{1}
   $$
with arbitrarily small $\varepsilon >0$. Therefore, when $n$ is odd, we say that $(u, v)$ is a pair of classical solution to \eqref{eq1-1} if and only if $(u, v)$ satisfies \eqref{eq1-1} and $(u,v)\in \left(C_{loc}^{n, \varepsilon}(\Omega)\cap C^{n-1}(\overline{\Omega})\right)^2$.

\smallskip

From the definition of the higher order fractional Laplacian in \eqref{eq1-2}, we can see that it is a nonlocal operator like the fractional Laplacian. This kind of nonlocality makes the nonlocal operators different from the the regular Laplacians, and poses a strong obstacle in the generalization of many fine results from the regular Laplacians to the nonlocal ones. It is well-known that, when considering a Dirichlet problem involving the regular Laplacian in a bounded domain $\Omega$, we only need to prescribe the behavior of solution on $\partial\Omega$, while for the nonlocal operators, we need to impose the condition in the whole $\Omega^{c}:=\mathbb{R}^{n}\setminus\Omega$. The nonlocal operators (e.g. fractional Laplacians) have numerous applications in mathematical physics, probability and finance, such as anomalous diffusion and quasi-geostrophic flows, turbulence and water waves, molecular dynamics, relativistic quantum mechanics of stars, L\'{e}vy process, Brownian motion and Poisson process, etc. For more literature on higher order fractional Laplacian, please c.f. \cite{CDQ,CW,LZ,W} and the references therein.

\medskip

When $n=2$, the system \eqref{eq1-1} becomes the regular second order Lane-Emden system
\begin{equation}\label{eq1-1'}
\begin{cases}
-\Delta u(x)=v^p(x), &x\in\Omega,\\
-\Delta v(x)=u^q(x), &x\in\Omega, \\
u(x)=v(x)=0, &x\in\partial \Omega.\\
\end{cases}
\end{equation}
In \cite{CFM,FMR,HV,M}, the authors derived existence of solutions to \eqref{eq1-1'} for smooth bounded domains $\Omega\subseteq\mathbb{R}^{n}$ provided that $(p,q)$ is below the critical hyperbola (i.e., $\frac{1}{p+1}+\frac{1}{q+1}>\frac{n-2}{n}$), and nonexistence of solutions to \eqref{eq1-1'} for star-shaped domains $\Omega\subseteq\mathbb{R}^{n}$ if $(p,q)$ is equal to or above the critical hyperbola. Thus it is quite interesting to investigate the asymptotic properties of solutions $(u_{p,q},v_{p,q})$ when $(p,q)$ approaches the critical hyperbola from below. For $n\geq3$, \cite{CK,G} shown that, among other things, solutions to \eqref{eq1-1'} must blow up as $(p,q)$ getting close to the critical hyperbola. Recently, Kamburov and Sirakov \cite{KS2} established the uniform boundedness of positive solutions to critical order system \eqref{eq1-1} with $n=2$, that is, solutions are bounded independently of $(p,q)$ provided that $p$ and $q$ are comparable (i.e., $\frac{1}{K}q\leq p\leq Kq$). Being essentially different from the sub-critcial order cases $n\geq3$, the existence or nonexistence of the uniform a priori estimates can not be deduced from a limiting procedure and compactness argument in the critical order case $n=2$, since $(p,q)\rightarrow(+\infty,+\infty)$ as $(p,q)$ tends to the critical hyperbola. With such a priori estimates, one will be able to obtain the existence of solutions via topological degree or continuation arguments (see e.g. \cite{CDQ1,CLM,DQ,F1,FS,MP,QS1,QS2}). For related works on the asymptotic property of solutions as $(p,q)$ tends to the critical hyperbola in the critical order case $n=2$, please see \cite{CLZ} and the references therein. For more literature on existence, non-existence and qualitative properties of solutions to the Lane-Emden system \eqref{eq1-1'}, please refer to \cite{F1,F2,QS2} and the references therein.

\medskip

In particular, when $p=q$, the system \eqref{eq1-1} reduces into one single critical order scalar Lane-Emden equation
\begin{equation}\label{eq1-1''}
\begin{cases}
(-\Delta)^{\frac{n}{2}} u(x)=u^p(x),&x\in\Omega,\\
u(x)=-\Delta u(x)=\cdots =(-\Delta)^{\frac{n-1}{2}}u(x)=0, &x\in\mathbb{R}^n \backslash \Omega,\,\, \mbox{if}\,\, n\,\, \mbox{is odd};\\
u(x)=-\Delta u(x)=\cdots =(-\Delta)^{\frac{n}{2}-1}u(x)=0, &x\in\partial \Omega,\,\, \mbox{if}\,\, n\,\, \mbox{is even}.\\
\end{cases}
\end{equation}
When $n=2$, the uniform boundedness of solutions to the Lane-Emden equation \eqref{eq1-1''} was established by Ren and Wei \cite{RW} for the least-energy solutions and Kamburov and Sirakov \cite{KS1} for general positive solutions. For $n\geq4$ even, Dai and Duyckaerts \cite{DD} derived the uniform a priori estimates for the critical order Lane-Emden equation \eqref{eq1-1''}. Subsequently, for the higher order fractional cases $n\geq3$ odd, Chen and Wu \cite{CW} proved that positive solutions to the Lane-Emden equation \eqref{eq1-1''} are uniformly bounded. For $n\geq2$ even, among other things, \cite{CDQ1,DQ} established a priori estimates for (possibly sign-changing) classical solutions to generalized higher critical order uniformly elliptic equations and existence of positive solutions to critical order Lane-Emden equation \eqref{eq1-1''} for all $p\in(1,+\infty)$. Moreover, the $L^{\infty}$-norm of the positive solutions derived in \cite{CDQ1,DQ} will blow up as $p\rightarrow1^{+}$ provided that $diam \, \Omega<\sqrt{2n}$. For more literature on the asymptotic property of solutions to equation \eqref{eq1-1''} as $p\rightarrow+\infty$, please refer to \cite{H,R} and the references therein. The system case is much more delicate because of the coupling of $u$ and $v$, and the nonlinear interactive effect caused by the two different exponents $p$ and $q$.

\medskip

In this paper, we will investigate the uniform boundedness of positive solutions to the critical order Lane-Emden system \eqref{eq1-1} in general dimensions $n\geq3$, which extend the uniform estimates in \cite{KS1,KS2} from $n=2$ to higher dimensions $n\geq3$ and the uniform estimates in \cite{CW,DD} from scalar Lane-Emden equations to Lane-Emden system.

\subsection{Main results}

Our main result on the uniform boundedness of solutions to the higher critical order Lane-Emden system \eqref{eq1-1} is the following theorem.
\begin{theorem}\label{th1.3}
Let $\Omega \subset \mathbb{R}^n$ ($n\geq 3$) be a bounded and strictly convex domain with smooth boundary and $(u,v)$ be a pair of positive solution to \eqref{eq1-1} with $pq-1\geq\kappa$ for some constant $\kappa>0$. If $n$ is even, we assume $(u,v)\in (C^n(\Omega)\cap C^{n-2}(\bar \Omega))^2$. If $n$ is odd, we assume that $(u, v)\in (C_{loc}^{n, \varepsilon}(\Omega)\cap C^{n-1}(\overline{\Omega})\cap C^{n-2}_{0}(\Omega))^2$. In addition, suppose that
\begin{equation}\label{condition1}
p\leq K q \,\,\,\, (resp. \,\, q\leq K p)
\end{equation}
for some constant $K\geq1$. Then there exists a constant $C>0$ depending only on $n$, $\Omega$, $\kappa$ and $K$ such that any solution $(u,v):=(u_{p,q},v_{p,q})$ of \eqref{eq1-1} satisfies
$$
\|u\|_{L^\infty(\overline{\Omega})} \leq C \,\,\,\, (resp. \,\, \|v\|_{L^\infty(\overline{\Omega})}\leq C).
$$
Furthermore, if $\frac{1}{K}q\leq p\leq K q$, then there exists a constant $C>0$  depending only on $n$, $\Omega$, $\kappa$ and $K$ such that any solution $(u,v)$ of \eqref{eq1-1} are uniformly bounded by $C$ (we emphasize here that $C$ is independent of $p$ and $q$), i.e.,
$$
\|u\|_{L^\infty(\overline{\Omega})} \leq C \quad \mbox{and} \quad \|v\|_{L^\infty(\overline{\Omega})}\leq C.
$$
\end{theorem}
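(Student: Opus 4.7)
The plan is to adapt the strategy of Kamburov--Sirakov from $n=2$ to general $n\ge 3$ and from the scalar Lane--Emden equation to the system: push the maxima of $u$ and $v$ into a fixed interior subdomain via the method of moving planes, then chain Harnack inequalities along the iterated Laplacians (or across the half-Laplacian factor when $n$ is odd) to relate $\max u$ and $\max v$, and finally close a short algebraic inequality. By the $(u,p)\leftrightarrow(v,q)$ symmetry of \eqref{eq1-1}, it suffices to produce the one-sided bound $\|u\|_\infty\le C$ under $p\le Kq$; the two-sided statement follows by applying the argument in both directions.

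For the first step I would perform the method of moving planes locally along every inward normal direction at $\partial\Omega$. Strict convexity and smoothness of $\Omega$, together with the narrow-region maximum principle (classical in the even case; available in the higher order fractional case via the splitting \eqref{eq1-2} as in \cite{CW,CDQ}), produce a universal $\delta=\delta(\Omega)>0$ such that $u$ and $v$ are strictly monotone inward on the $\delta$-neighborhood of $\partial\Omega$. Consequently $\max u$ and $\max v$ are attained in the fixed interior set $\Omega_\delta:=\{x\in\Omega:\mathrm{dist}(x,\partial\Omega)\ge\delta\}$, which removes the boundary singularity of Green's functions and opens the way for interior Harnack estimates.

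For the second step, in the even case $n=2m$ set $u_k:=(-\Delta)^k u$, $v_k:=(-\Delta)^k v$, so that the system becomes a cascade $-\Delta u_k=u_{k+1}$, $-\Delta v_k=v_{k+1}$, $u_m=v^p$, $v_m=u^q$ with zero Dirichlet data at each level. The step-one monotonicity propagates to every $u_k,v_k$, and the classical interior Harnack inequality applied on balls of radius $\delta/4$ around interior maximum points, together with two-sided Green's function bounds on $\Omega$, yields comparabilities $\max u_k\asymp\max u_{k+1}$ with constants depending only on $n,\Omega$. Telescoping gives
\[
\max u\asymp(\max v)^p \qquad\text{and}\qquad \max v\asymp(\max u)^q,
\]
with implicit constants depending only on $n$ and $\Omega$. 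The odd case $n=2m+1$ is identical after absorbing the single half-Laplacian factor through its Poisson-kernel representation on $\Omega$, respecting the prescribed exterior Navier data. Chaining the two comparabilities yields $(\max u)^{pq-1}\le C^{1+p}$ with $C=C(n,\Omega)\ge 1$; using $pq-1\ge\kappa$ and $p\le Kq$ to bound $(1+p)/(pq-1)$ uniformly in terms of $\kappa,K$, taking logarithms delivers $\|u\|_\infty\le C(n,\Omega,\kappa,K)$.

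The hardest step is the Harnack chain in the odd-order fractional case: the nonlocality of $(-\Delta)^{1/2}$ blocks a direct pointwise Green-function telescoping, so one must either lift via a Caffarelli--Silvestre type extension and run classical Harnack in the extended half-space, or develop a Poisson-kernel comparability on $\Omega$ that respects the Navier exterior data. Equally essential is that every Harnack and Green-function constant in the cascade be \emph{independent} of $p$ and $q$, since the hypothesis allows $(p,q)\to(\infty,\infty)$ and no blow-up rescaling is available to extract a limiting profile --- this is precisely the critical order obstruction overcome in \cite{KS1,KS2,CW,DD} in their respective settings, and it is what forces the entire argument to be executed on the fixed domain $\Omega$.
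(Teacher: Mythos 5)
The monotonicity step of your proposal matches the paper's Theorems \ref{th1.1} and \ref{th1.2}, and your reduction of the two-sided bound to two runs of the one-sided bound is exactly what the paper does. But the central claim in your second step --- that Harnack chaining and two-sided Green's function estimates give the \emph{two-sided} comparability $\max u \asymp (\max v)^p$ and $\max v \asymp (\max u)^q$ with constants depending only on $n,\Omega$ --- is false, and it is precisely where the real work is hiding. Only the upper direction is available: writing $u_k(x)=\int_\Omega G_2(x,y)u_{k+1}(y)\,\mathrm{d}y$, one gets $\max u_k \le C\max u_{k+1}$ since $\int_\Omega G_2(x,y)\,\mathrm{d}y$ is bounded. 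The lower direction $\max u_k \ge c\max u_{k+1}$ is simply not a consequence of Harnack: if $u_{k+1}$ concentrates on a ball of radius $\varepsilon$ around its peak, then $u_k$ at that same point is of order $(\max u_{k+1})\,\varepsilon^2$, which is arbitrarily small relative to $\max u_{k+1}$. Nothing in the problem rules out such concentration \emph{a priori} --- indeed, ruling it out is the content of the theorem. Consequently the telescoped identity $(\max u)^{pq-1}\le C^{1+p}$ you derive from chaining two false lower bounds does not hold, and the argument does not close.

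What the paper actually proves is much weaker but just adequate: only the one-sided inequalities $M\le \frac12 N^p$ and $N\le \frac12 M^q$ (Lemma \ref{le3.2}, via a comparison function), and the opposite-direction estimates come in a very different, sublinear form, $M\le \gamma_3 N^{p/(\frac{2}{n}q+1)}$ and $N\le \gamma_3 M^{q/(\frac{2}{n}p+1)}$ (Lemma \ref{le4.4}). These are obtained not by comparing $\max u_k$ to $\max u_{k+1}$, but by (i) deriving the uniform integral bounds $\int_\Omega v^p\le \gamma_1$, $\int_\Omega u^q\le\gamma_1$ from the monotonicity layer plus a cut-off and integration by parts (Lemma \ref{le3.1}), (ii) estimating $\|\Delta u\|_\infty$ and $\|\Delta v\|_\infty$ via Green's representation with the radius of the near/far split chosen in terms of $M,N,p,q$, (iii) using the scalar Harnack inequality (with the inhomogeneous term in $L^n$) to show $u$ stays within a factor $1-\frac{1}{16q}$ of $M$ on a quantitative ball $B_{R_1}(x_u)$ with $R_1\sim M^{1/n}N^{-p/n}q^{-1/n}$, and (iv) inserting that lower bound into the integral bound $\int u^q\le\gamma_1$ to obtain the exponent relations. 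The final closure (Lemma \ref{le4.5}) uses the logarithmic Green kernel to turn these into $\ln M\cdot\ln N\le C$, which is again genuinely different from your algebraic step. If you want to repair your proposal, the missing ingredient is the estimate on how fast $u$ and $v$ can drop off away from their maxima (controlled by $\|\Delta u\|_\infty, \|\Delta v\|_\infty$), combined with the uniform $L^q$ and $L^p$ integral bounds; the naive two-sided Harnack comparability cannot replace this.
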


In order to prove Theorem \ref{th1.3}, we need to show the following monotonicity results in boundary layer with uniform thickness w.r.t. $(p,q)$ by using the method of moving planes in local ways.
\begin{theorem}\label{th1.1}
Assume that $n\geq 3$ is odd and $\Omega \subset \mathbb{R}^n$ is a bounded and strictly convex domain with smooth boundary. Suppose that $(u, v)\in \left(C_{loc}^{n, \varepsilon}(\Omega)\cap C^{n-1}(\overline{\Omega})\right)^2$ is a pair of positive classical solution to
\begin{equation*}
\begin{cases}
(-\Delta)^\frac{n}{2} u(x)=v^p(x),&x\in\Omega,\\
(-\Delta)^\frac{n}{2} v(x)=u^q(x),&x\in\Omega, \\
u(x)=-\Delta u(x)=\cdots =(-\Delta)^{\frac{n-1}{2}}u(x)=0, &x\in\mathbb{R}^n\backslash \Omega,\\
v(x)=-\Delta v(x)=\cdots =(-\Delta)^{\frac{n-1}{2}}v(x)=0, &x\in \mathbb{R}^n\backslash \Omega.
\end{cases}
\end{equation*}
Then, for any  $x^0 \in \partial \Omega$, there exists a $\delta_0>0$ depending only on $x^0$ and $\Omega$ such that $(-\Delta)^i u(x)$ and $(-\Delta)^i v(x)$ ($i=0, \cdots, \frac{n-1}{2}$) are strictly monotone increasing along the inward normal direction $\nu^0$ in the region
$$
\tilde{\Sigma}_{\delta_0}=\left\{x\in \bar \Omega \mid 0 \leq (x-x^0)\cdot \nu^0 \leq \delta_0\right\}.
$$
\end{theorem}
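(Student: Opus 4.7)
The plan is to apply the method of moving planes locally near $x^0$, after recasting the $n$-th order fractional system as a cascade of second-order equations capped by a pair of half-Laplacian equations. Setting $U_i:=(-\Delta)^i u$ and $V_i:=(-\Delta)^i v$ for $i=0,1,\dots,\frac{n-1}{2}$ (integer powers since $n$ is odd), the system \eqref{eq1-1} is equivalent to the cascade
\begin{align*}
-\Delta U_i&=U_{i+1},\ \ -\Delta V_i=V_{i+1}\qquad (0\le i\le\tfrac{n-3}{2}),\\
(-\Delta)^{1/2}U_{(n-1)/2}&=V_0^{p},\ \ (-\Delta)^{1/2}V_{(n-1)/2}=U_0^{q}
\end{align*}
in $\Omega$, together with $U_i=V_i=0$ on $\mathbb{R}^n\setminus\Omega$ for every $i$. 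The strong maximum principle for the half-Laplacian applied to $U_{(n-1)/2},V_{(n-1)/2}$, followed by iterated application of the classical strong maximum principle down the chain, gives $U_i,V_i>0$ in $\Omega$; the Hopf lemma (classical for $i<\tfrac{n-1}{2}$ and its fractional counterpart for $i=\tfrac{n-1}{2}$) then produces a strictly positive inward normal derivative of each $U_i,V_i$ at every boundary point.

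Fix $x^0\in\partial\Omega$ with inward unit normal $\nu^0$, and for $\lambda>0$ set $T_\lambda:=\{(x-x^0)\cdot\nu^0=\lambda\}$, $\Sigma_\lambda:=\{(x-x^0)\cdot\nu^0<\lambda\}$, and denote by $x^\lambda$ the reflection of $x$ across $T_\lambda$. Strict convexity and smoothness of $\Omega$ provide a constant $\delta_1=\delta_1(x^0,\Omega)>0$ with the property that, for every $\lambda\in(0,\delta_1]$, the cap $D_\lambda:=\Sigma_\lambda\cap\Omega$ is reflected into $\Omega$ by $x\mapsto x^\lambda$. Define the antisymmetric differences $w_i^\lambda(x):=U_i(x^\lambda)-U_i(x)$ and $W_i^\lambda(x):=V_i(x^\lambda)-V_i(x)$. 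By the rigid-motion invariance of $-\Delta$ and $(-\Delta)^{1/2}$, the pair $(w_i^\lambda,W_i^\lambda)$ satisfies the linearized cascade
\begin{align*}
-\Delta w_i^\lambda&=w_{i+1}^\lambda,\ \ -\Delta W_i^\lambda=W_{i+1}^\lambda\qquad (0\le i\le\tfrac{n-3}{2}),\\
(-\Delta)^{1/2}w_{(n-1)/2}^\lambda&=c_v^\lambda(x)\,W_0^\lambda,\ \ (-\Delta)^{1/2}W_{(n-1)/2}^\lambda=c_u^\lambda(x)\,w_0^\lambda
\end{align*}
in $D_\lambda$, where the nonnegative coefficients $c_u^\lambda,c_v^\lambda$ arise from the mean-value theorem applied to $t\mapsto t^{p},t^{q}$. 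On $T_\lambda\cap\bar\Omega$ all differences vanish, and on the remaining portion of $\partial D_\lambda$ they are nonnegative because each $U_i,V_i$ vanishes on $\partial\Omega$ while remaining nonnegative throughout $\mathbb{R}^n$.

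The crux, and main obstacle, is a unified narrow-region principle for the coupled cascade in $D_\lambda$. My plan is to invert the integer-order chains using the positive Dirichlet Green's function of $-\Delta$ on $D_\lambda$ together with the nonnegative Poisson boundary data, expressing $w_0^\lambda$ and $W_0^\lambda$ as positive iterated integrals of $w_{(n-1)/2}^\lambda$ and $W_{(n-1)/2}^\lambda$ plus nonnegative remainders. Substituting into the two fractional top equations and exploiting the antisymmetry $w_{(n-1)/2}^\lambda(x^\lambda)=-w_{(n-1)/2}^\lambda(x)$ to rewrite $(-\Delta)^{1/2}$ in a sign-definite integral form on the negative set of $w_{(n-1)/2}^\lambda$, the problem reduces to a coupled integral inequality for $(w_{(n-1)/2}^\lambda,W_{(n-1)/2}^\lambda)$ whose operator norm on $L^\infty(D_\lambda)$ is controlled by $C\lambda^{\alpha}$ for some $\alpha>0$. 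Choosing $\delta_0\le\delta_1$ with $C\delta_0^{\alpha}<1$ then forces $w_i^\lambda,W_i^\lambda\ge 0$ in $D_\lambda$ for every $\lambda\in(0,\delta_0]$, and the strong maximum principle applied equation by equation upgrades this to strict positivity. Since $w_i^\lambda,W_i^\lambda>0$ encodes strict monotonicity of each $U_i,V_i$ along $\nu^0$ within $\tilde\Sigma_{\delta_0}$, the conclusion follows. The hardest aspect is the joint control of the nonlocal and local contributions in the thin cap: standard system maximum principles do not apply directly because of the cross-coupling between the $u$- and $v$-chains, which is precisely why the moving planes must be run \emph{locally} in the spirit of \cite{CW}.
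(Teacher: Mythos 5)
Your overall plan follows the paper's framework closely: the same cascade decomposition into second-order equations capped by a pair of half-Laplacian equations, the same local moving-plane setup with the reflected caps, and the same linearized antisymmetric system. Where you diverge is the narrow-region principle itself. The paper works pointwise: it introduces the auxiliary function $\phi(x)=\cos\bigl(\tfrac{(x-x^0)\cdot\nu^0}{\delta}\bigr)$, passes to $U^\lambda/\phi$, tracks a chain of negative minima down the cascade from $U^\lambda$ to $U^\lambda_{s_0}$ then across to $V^\lambda$ through the two fractional equations, and closes the loop with a contradiction when $\delta^{2s+2t}$ is small relative to $pq\|u\|_\infty^{q-1}\|v\|_\infty^{p-1}$. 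You instead propose a Green's-function inversion of the integer chains in $D_\lambda$ followed by an $L^\infty$ contraction for the coupled fractional top layer. This is a genuinely different way to get a narrow-region principle, and in spirit it can work; but it doesn't change the structure of the smallness condition, since the linearized coefficients $c_u^\lambda,c_v^\lambda$ are of size $pN^{p-1}$, $qM^{q-1}$, so the constant $C$ in your "$C\lambda^\alpha<1$" inevitably carries the same $pq\,M^{q-1}N^{p-1}$ factor.

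This is where the gap lies. You write "Choosing $\delta_0\le\delta_1$ with $C\delta_0^\alpha<1$ then forces $w_i^\lambda,W_i^\lambda\ge 0$ in $D_\lambda$ for every $\lambda\in(0,\delta_0]$," and stop there. But with $C$ depending on $p$, $q$, $\|u\|_\infty$, $\|v\|_\infty$, the $\delta_0$ you obtain this way depends on the solution and the exponents, whereas the theorem asserts $\delta_0$ depends \emph{only} on $x^0$ and $\Omega$. Your proposal is missing the continuation argument (Step 2 of the paper's Lemma 2.1): after the narrow-region principle starts the plane at some small, solution-dependent $\delta$, one defines $\lambda_0=\sup\{\lambda:\text{all }w_i^\mu,W_i^\mu\ge 0 \text{ in }\Sigma_\mu\ \forall\,\mu\le\lambda\}$, shows strict positivity in $\Sigma_{\lambda_0}\cap\Omega$, and then uses a compactness-plus-narrow-region argument in the thin shell $\bigl(\Sigma_{\lambda}\setminus\overline{\Sigma_{\lambda_0-\sigma}}\bigr)\cap\Omega$ to move the plane a bit further, contradicting maximality of $\lambda_0$ unless the geometric obstruction (reflection leaving $\Omega$) is hit. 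Only through this continuation does $\delta_0$ become the purely geometric quantity $\delta_1(x^0,\Omega)$. Without Step 2, the narrow-region contraction alone proves monotonicity only in a boundary layer whose thickness degenerates as $p,q\to\infty$, which is exactly what the subsequent a priori estimates cannot afford. Adding the continuation step would repair the argument, and then your Green's-function variant of the narrow-region principle would be a reasonable, if more technically involved, alternative to the paper's auxiliary-function computation.
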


\begin{theorem}\label{th1.2}
Assume that $n\geq 4$ is even and $\Omega \subset \mathbb{R}^n$ is a bounded and strictly convex domain with smooth boundary. Let $(u,v)\in (C^n(\Omega)\cap C^{n-2}(\bar \Omega))^2$ be a pair of positive classical solution to
\begin{equation}\label{eq1-4}
\begin{cases}
(-\Delta)^{\frac{n}{2}}u(x)=v^p(x),&x\in\Omega,\\
(-\Delta)^{\frac{n}{2}} v(x)=u^q(x),&x\in\Omega, \\
u(x)=-\Delta u(x)=\cdots =(-\Delta)^{\frac{n}{2}}u(x)=0, &x\in\partial \Omega,\\
v(x)=-\Delta v(x)=\cdots =(-\Delta)^{\frac{n}{2}}v(x)=0, &x\in\partial \Omega.
\end{cases}
\end{equation}
Then, for any $x^0 \in \partial \Omega$, there exists a constant $\delta_0>0$ depending only on $x^0$ and $\Omega$ such that $(-\Delta)^i u(x)$ and $(-\Delta)^i v(x)$ ($i=0,\cdots, {\frac{n}{2}-1}$) are strictly monotone increasing along the inward normal direction $\nu^0$ in the region
$$
\tilde{\Sigma}_{\delta_0}=\left\{x\in \bar \Omega \mid 0 \leq (x-x^0)\cdot \nu^0 \leq \delta_0\right\}.
$$
\end{theorem}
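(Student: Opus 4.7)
The plan is to adapt the method of moving planes, localized to a narrow boundary layer, to the cooperative second-order system obtained by iteratively decomposing the $n$-th order Navier problem. Introduce the Laplacian iterates
\begin{equation*}
u_i:=(-\Delta)^{i}u,\qquad v_i:=(-\Delta)^{i}v,\qquad i=0,1,\ldots,\tfrac{n}{2}-1,
\end{equation*}
so that \eqref{eq1-4} becomes the chain
\begin{equation*}
-\Delta u_i=u_{i+1},\;\; -\Delta v_i=v_{i+1}\;\;(0\le i\le \tfrac{n}{2}-2),\qquad -\Delta u_{\frac{n}{2}-1}=v^{p},\;\; -\Delta v_{\frac{n}{2}-1}=u^{q},
\end{equation*}
with Navier data $u_i=v_i=0$ on $\partial\Omega$. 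A downward iteration of the classical strong maximum principle (starting at $i=\tfrac{n}{2}-1$, whose right-hand side is positive) yields $u_i,v_i>0$ in $\Omega$. After a rigid motion I assume $x^0=0$ and $\nu^0=e_n$, set $T_\lambda:=\{x_n=\lambda\}$, $\Sigma_\lambda:=\Omega\cap\{0<x_n<\lambda\}$, and define the reflection $x^\lambda:=(x_1,\ldots,x_{n-1},2\lambda-x_n)$. Strict convexity and smoothness of $\partial\Omega$ at $x^0$ provide $\delta_0=\delta_0(x^0,\Omega)>0$ such that $\{x^\lambda:x\in\Sigma_\lambda\}\subset\Omega$ whenever $\lambda\in(0,\delta_0]$.

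Introduce the difference functions
\begin{equation*}
U_\lambda^{i}(x):=u_i(x^\lambda)-u_i(x),\qquad V_\lambda^{i}(x):=v_i(x^\lambda)-v_i(x),\qquad x\in\overline{\Sigma}_\lambda,
\end{equation*}
which solve the cooperative system $-\Delta U_\lambda^{i}=U_\lambda^{i+1}$, $-\Delta V_\lambda^{i}=V_\lambda^{i+1}$ for $i<\tfrac{n}{2}-1$ together with
\begin{equation*}
-\Delta U_\lambda^{\frac{n}{2}-1}=v^{p}(x^\lambda)-v^{p}(x),\qquad -\Delta V_\lambda^{\frac{n}{2}-1}=u^{q}(x^\lambda)-u^{q}(x).
\end{equation*}
On the flat portion $T_\lambda\cap\Omega$ of $\partial\Sigma_\lambda$ every $U_\lambda^{i}$ and $V_\lambda^{i}$ vanishes, whereas on the curved portion $\partial\Omega\cap\overline{\Sigma}_\lambda$ the Navier data yield $u_i(x)=v_i(x)=0$ while $x^\lambda\in\Omega$ forces $u_i(x^\lambda),v_i(x^\lambda)\ge 0$; hence the differences are nonnegative on all of $\partial\Sigma_\lambda$.

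The core step is to prove $U_\lambda^{i},V_\lambda^{i}\ge 0$ in $\Sigma_\lambda$ for every $\lambda\in(0,\delta_0)$ via a narrow-region maximum principle for cooperative systems. Suppose to the contrary that some component has a negative minimum attained at an interior point $\bar x\in\Sigma_\lambda$. At $\bar x$ one has $-\Delta(\cdot)\le 0$, and combined with the sign identity $\operatorname{sign}(v^{p}(x^\lambda)-v^{p}(x))=\operatorname{sign} V_\lambda^{0}(x)$ (strict monotonicity of $t\mapsto t^{p}$ on $[0,\infty)$) and its $u^{q}$ counterpart, the negativity propagates along the cycle $U_\lambda^{0}\to\cdots\to U_\lambda^{\frac{n}{2}-1}\to V_\lambda^{0}\to\cdots\to V_\lambda^{\frac{n}{2}-1}\to U_\lambda^{0}$. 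Combined with the nonnegative boundary data and a narrow-region/ABP-type inequality this produces a contradiction once $|\Sigma_\lambda|$ is small enough. The strong maximum principle then upgrades nonnegativity to strict positivity in the interior. Since $-\Delta U_\lambda^{i}=U_\lambda^{i+1}\ge 0$ in $\Sigma_\lambda$ and $U_\lambda^{i}=0$ on $T_\lambda$, Hopf's lemma at $(x',\lambda)\in T_\lambda\cap\Omega$ gives $\partial_{x_n}U_\lambda^{i}(x',\lambda)<0$, which through the reflection identity $\partial_{x_n}U_\lambda^{i}(x',\lambda)=-2\partial_{x_n}u_i(x',\lambda)$ yields $\partial_{x_n}u_i(x',\lambda)>0$; the same argument applied to $V_\lambda^{i}$ gives $\partial_{x_n}v_i(x',\lambda)>0$. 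Varying $\lambda$ over $(0,\delta_0)$ then delivers the strict monotonicity of $(-\Delta)^{i}u$ and $(-\Delta)^{i}v$ along $\nu^0$ throughout $\tilde{\Sigma}_{\delta_0}$.

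The principal technical difficulty lies in ensuring that $\delta_0$ depends only on $x^0$ and $\Omega$ and not on $(p,q,u,v)$. A naive linearization $v^{p}(x^\lambda)-v^{p}(x)=p\xi^{p-1}(x)V_\lambda^{0}(x)$ introduces a potential with $L^{\infty}$-norm uncontrolled as $(p,q)\to(\infty,\infty)$, so any narrow-region argument quantitatively based on this Lipschitz constant would force $\delta_0$ to depend on $(p,q)$. The remedy is to avoid linearizing and retain only the cooperative sign information on $v^{p}(x^\lambda)-v^{p}(x)$ and $u^{q}(x^\lambda)-u^{q}(x)$, so that the smallness required of $\lambda$ is dictated purely by the local geometric reflection radius at $x^0$ supplied by the strict convexity of $\partial\Omega$; this ultimately yields the uniform thickness $\delta_0=\delta_0(x^0,\Omega)$ asserted in the statement.
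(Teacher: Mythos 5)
Your decomposition of the $n$-th order Navier problem into the second-order chain $-\Delta u_i=u_{i+1}$, $-\Delta u_{\frac{n}{2}-1}=v^p$ (and the analogue in $v$), the reflection setup, and the boundary comparison all match the structure of the paper's Lemma~\ref{le2.1a}, which is how Theorem~\ref{th1.2} is actually proved. However, there is a genuine gap in the central step that is supposed to make $\delta_0$ independent of $p$, $q$, $u$ and $v$.

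You correctly observe that the mean-value linearization $v^{p}(x^\lambda)-v^{p}(x)=p\xi^{p-1}V^{0}_\lambda$ produces a coupling coefficient whose size is uncontrolled as $(p,q)\to(\infty,\infty)$, and you propose to fix this by ``avoiding linearization and retaining only the cooperative sign information.'' This remedy does not close the argument. The narrow-region/ABP maximum principle for a cooperative chain $-\Delta w_i=w_{i+1}$ terminating in $-\Delta w_{\frac{n}{2}-1}=v^{p}(x^\lambda)-v^{p}(x)$ is inherently quantitative: knowing only that a negative interior minimum of one component forces a nonpositive value of the next somewhere does not produce a self-contradiction, because the cycle simply closes without any strict gain. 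Any ABP-type estimate you apply to the last equation must bound $\|(v^{p}(x^\lambda)-v^{p}(x))^{-}\|_{L^n(\Sigma_\lambda)}$ in terms of $(V^{0}_\lambda)^{-}$, and any such bound reintroduces a factor of order $p\|v\|_\infty^{p-1}$ whether or not one writes the mean-value form. The $(p,q)$-dependence of the narrow-region threshold is intrinsic, not an artifact of notation, and as a result the width you can certify by the narrow-region argument alone degenerates as $(p,q)\to(\infty,\infty)$ --- exactly what must be avoided for the later a priori estimates.

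What the paper actually does is accept this dependence and then remove it by a sliding argument. In Step 1 of Lemma~\ref{le2.1}, the narrow-region principle is proved with the auxiliary ratio $\overline{U^{\lambda}_i}=U^{\lambda}_i/\phi$, $\phi(x)=\cos\bigl((x-x^0)\cdot\nu^0/\delta\bigr)$, and yields a starting threshold of the form $\delta<\bigl(C^{-1}pq\|u\|_\infty^{q-1}\|v\|_\infty^{p-1}\bigr)^{-1/(2s+2t)}$, which explicitly depends on $(p,q,u,v)$. The uniformity of $\delta_0$ is then obtained in Step 2: one defines $\lambda_0$ as the supremum of admissible $\lambda$, shows by contradiction and continuity that $\lambda_0$ can be pushed until the reflected cap leaves $\Omega$, and at each push re-applies the narrow-region argument only in the thin set $D_\lambda=(\Sigma_\lambda\setminus\overline{\Sigma_{\lambda_0-\sigma}})\cap\Omega$, where $\sigma$ is an auxiliary parameter that one is free to shrink. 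The $(p,q)$-dependent smallness is thereby absorbed into $\sigma$, while the final $\delta_0$ is dictated solely by the geometric reflection condition, hence only by $x^0$ and $\Omega$. This sliding mechanism is the missing idea in your proposal; without it your argument at best gives a boundary layer whose thickness shrinks to zero as $p,q\to\infty$.

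A smaller point: the assertion that ``the negativity propagates along the cycle'' is not automatic. At an interior negative minimum $\bar x$ of $U^{i}_\lambda$ you get only $U^{i+1}_\lambda(\bar x)\le 0$, which is weaker than a strict negative minimum of $U^{i+1}_\lambda$. The paper's auxiliary function $\phi$ is precisely what converts this into the strict quantitative inequality $U^{i+1}_\lambda(x_i)\le U^{i}_\lambda(x_i)/\delta^{2}<0$ that drives the induction.
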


The above monotonicity results implies that the maxima of $u(x)$, $v(x)$, $(-\Delta)^i u(x)$ and $(-\Delta)^i v(x)$ ($i=1,\cdots, {\frac{n}{2}-1}$) are attained in the interior of $\Omega$ away from the boundary, that is, in the set $\{x\in\Omega\mid dist(x,\partial\Omega)\geq\delta\}$. As a consequence, by Theorems \ref{th1.1} and \ref{th1.2}, constructing suitable cut-off functions instead of the first eigenvalues and eigenfunctions, and the integration by parts formula for higher order fractional operators, we can deduce a uniform estimates on $\int_{\Omega}v^{p}\mathrm{d}x$, $\int_{\Omega}u^{q}\mathrm{d}x$ and integrals with logarithmic singularity (see Lemmas \ref{le3.1} and \ref{le4.2}).

\smallskip

Comparing with the planar second order Lane-Emden system \eqref{eq1-1'} in \cite{KS2}, for higher critical order Lane-Emden system \eqref{eq1-1}, we need to derive the monotonicity of $-\Delta u$ and $-\Delta v$ in boundary layer with uniform thickness w.r.t. $(p,q)$, control the maxima of $-\Delta u$ and $-\Delta v$ the speed of $u$ and $v$ when they decrease away from their maxima in terms of maxima of $u$ and $v$ by showing more precise relationships between the maxima of $u$ and $v$, and show that if one of the maxima of $(u,v)$ is large enough then the other is also large. Being different from the single critical order Lane-Emden equation \eqref{eq1-1''} in \cite{CW,DD,KS1}, in order to prove uniform a priori estimates for higher critical order Lane-Emden system \eqref{eq1-1}, it is necessary to establish complicated and precise relationships between the maxima of $u$ and $v$. Consequently, based on the uniform estimates of $\int_{\Omega}v^{p}\mathrm{d}x$, $\int_{\Omega}u^{q}\mathrm{d}x$ and integrals with logarithmic singularity, through combination of a re-scaling argument, the Green representation for (higher order derivatives of) solutions and the inhomogeneous Harnack inequality, we derive uniform a priori estimates by establishing the relationships between the maxima of $u$, $v$, $-\Delta u$ and $-\Delta v$ and estimating how fast the values of $u$ and $v$ can decrease away from their maxima (see Lemmas \ref{le3.2}, \ref{le4.1}, \ref{le4.2}, \ref{le4.3}, \ref{le4.4} and \ref{le4.5}), in which we divided the integral domain into three proper parts and estimate on each part very carefully. In particular, we derived a refined relationship between the maxima of $u$ and $v$ in Lemma \ref{le4.4}, which play a crucial role in controlling the the maxima of $-\Delta u$ and $-\Delta v$ in terms of maxima of $u$ and $v$ and the speed of $u$ and $v$ when they decrease away from their maxima in Lemma \ref{le4.3}. Lemmas \ref{le4.4}, \ref{le4.3} and \ref{le4.5} are the key ingredients in our proof.

\medskip

In Section 2, we use the method of moving planes in local ways to derive the monotonicity of solutions in boundary layer with uniform thickness w.r.t. $(p,q)$ and prove Theorem \ref{th1.1} and \ref{th1.2}. Section 3 is devoted to proving some basic relationships between the maxima of $u$ and $v$. Finally, in Section 4, we establish the uniform a priori estimates and prove Theorem \ref{th1.3}.

\medskip

In what follows, without confusion, we shall use $c$, $C,$ $\gamma_i$ or $C_i$ ($i=0, 1, 2\cdots$) to denote a generic constant which may vary in the context.
In addition, we will denote the solution $(u_{p,q}, v_{p,q})$ by $(u,v)$ for the sake of simplicity.

\section{monotonicity}

In this section, we show that the solutions to \eqref{eq1-1} are strictly monotone increasing along the inward normal direction in a boundary layer of $\Omega$ with uniform thickness. To this end, when $n\geq 3$ is odd, we split the system \eqref{eq1-1} into two fractional equations and $n-1$ integer order equations, then these $n+1$ equations together with the Navier boundary conditions constitute a system. Then applying the direct method of moving planes in local ways, we can obtain the monotonicity results. If $n\geq 4$ is even, we split the system \eqref{eq1-1} into $n$ integer order equations to constitute a system, similarly, using the method of moving planes in local ways, we also can obtain the desired monotonicity results. The proofs for these two cases are similar but not exactly the same since we need to deal with fractional equations and integer order equations in different ways.

\medskip

\subsection{Case 1: $n\geq3$ is odd.}

For any $s=s_0+s_1$, where $s_0\in\mathbb{N}_+$ is the integer part of $s$ and $s_1\in(0,1)$ is the fractional part, the general higher order fractional Laplacian is defined by
$$
(-\Delta)^s u(x):=(-\Delta)^{s_1}\circ(-\Delta)^{s_0} u(x), \quad \forall \, x\in\Omega
$$
for any $u\in C_{loc}^{2s_0+[2s_1], \{2s_1\}+\varepsilon}(\Omega)$ with arbitrary $\varepsilon>0$ such that $(-\Delta)^{s_0}u\in {\mathcal{L}}_{2 s_1}$. Similarly, for any $t=t_0+t_1$, denote $t_0 \in \mathbb{N}_+$ as the integer part of $t$ and $t_1\in(0, 1)$ as the decimal part. If $v\in C_{loc}^{2t_0+[2t_1],\{2t_1\}+\varepsilon}(\Omega)$ with arbitrary $\varepsilon>0$ such that $(-\Delta)^{t_0}u\in {\mathcal{L}}_{2 t_1}$, we can define
$$
(-\Delta)^t v(x):=(-\Delta)^{t_1}\circ(-\Delta)^{t_0} v(x), \quad \forall \, x\in\Omega.
$$

\medskip

We can derive monotonicity results for solutions to more general higher order fractional system (see \eqref{eq2-1}) than \eqref{eq1-1} with $n\geq3$ odd.
\begin{lemma}\label{le2.1}
Let $n\geq1$ and $\Omega \subset \mathbb{R}^n$ be a bounded and strictly convex domain with smooth boundary. Assume that $(u,v)\in \left(C_{loc}^{2s_0+[2s_1], \{2s_1\}+\varepsilon}(\Omega)\cap C^{2s_{0}}(\overline{\Omega})\right)\times\left(C_{loc}^{2t_0+[2t_1], \{2t_1\}+\varepsilon}(\mathbb{R}^n)\cap C^{2t_{0}}(\overline{\Omega})\right)$ is a pair of positive solution to
\begin{equation}\label{eq2-1}
\begin{cases}
(-\Delta)^s u(x)=v^p(x),&x\in\Omega,\\
(-\Delta)^t v(x)=u^q(x),&x\in\Omega, \\
u(x)=-\Delta u(x)=\cdots =(-\Delta)^{s_0}u(x)=0, &x\in\mathbb{R}^n\backslash \Omega,\\
v(x)=-\Delta v(x)=\cdots =(-\Delta)^{t_0}v(x)=0, &x\in\mathbb{R}^n\backslash \Omega,
\end{cases}
\end{equation}
where $s=s_0+s_1, t= t_0+t_1$ with integers $s_0, t_0\in\mathbb{N}_+$ and decimals $s_1, t_1\in(0, 1)$. Then for any  $x^0 \in \partial \Omega,$ there exists a constant  $\delta_0>0$ depending only on $x^0$ and $\Omega$ such that $(-\Delta)^i u(x)$ and $(-\Delta)^j v(x)$ ($i=0,\cdots, s_0$, $j=0,\cdots, t_0$) are strictly monotone increasing along the inward normal direction $\nu^0$ in the region
$$
\tilde{\Sigma}_{\delta_0}=\{x\in \bar \Omega \mid 0 \leq (x-x^0)\cdot \nu^0 \leq \delta_0\}.
$$
\end{lemma}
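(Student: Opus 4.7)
The plan is to run the method of moving planes locally near $x^0$, applied simultaneously to the coupled cascade extracted from \eqref{eq2-1}. Set $U_i := (-\Delta)^i u$ for $i = 0,\dots,s_0$ and $V_j := (-\Delta)^j v$ for $j = 0,\dots,t_0$, so that the Navier-type exterior conditions make each $U_i$ and $V_j$ vanish on $\mathbb{R}^n\setminus\Omega$ and \eqref{eq2-1} rewrites as the chain
\begin{equation*}
-\Delta U_i = U_{i+1} \quad (0 \le i < s_0), \qquad (-\Delta)^{s_1} U_{s_0} = V_0^p,
\end{equation*}
together with its $V$-analogue closed by $(-\Delta)^{t_1} V_{t_0} = U_0^q$. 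As a preliminary, the fractional Dirichlet maximum principle applied to $U_{s_0}$ and $V_{t_0}$ and the classical maximum principle cascaded down give $U_i, V_j \ge 0$ in $\Omega$ for all $i, j$. After rotating/translating so that $x^0 = 0$ and $\nu^0 = e_1$, introduce $T_\lambda := \{x_1 = \lambda\}$, $\Sigma_\lambda := \{x \in \Omega : x_1 < \lambda\}$, and $x^\lambda := (2\lambda - x_1, x_2,\dots,x_n)$. The antisymmetric differences
\begin{equation*}
W_i^\lambda(x) := U_i(x^\lambda) - U_i(x), \qquad Z_j^\lambda(x) := V_j(x^\lambda) - V_j(x)
\end{equation*}
satisfy the coupled cooperative system
\begin{equation*}
-\Delta W_i^\lambda = W_{i+1}^\lambda, \;\; -\Delta Z_j^\lambda = Z_{j+1}^\lambda, \;\; (-\Delta)^{s_1} W_{s_0}^\lambda = c_1^\lambda Z_0^\lambda, \;\; (-\Delta)^{t_1} Z_{t_0}^\lambda = c_2^\lambda W_0^\lambda
\end{equation*}
in $\Sigma_\lambda$, with $c_1^\lambda, c_2^\lambda \ge 0$ coming from the mean-value theorem applied to $v^p, u^q$. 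Combining the interior positivity with the exterior vanishing, one checks that $W_i^\lambda, Z_j^\lambda \ge 0$ on $\{x_1 \le \lambda\} \setminus \Sigma_\lambda$.

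The crux is to show $W_i^\lambda, Z_j^\lambda \ge 0$ throughout $\Sigma_\lambda$ for every $\lambda \in (0, \delta_0)$. Strict convexity of $\Omega$ forces $\mathrm{diam}(\Sigma_\lambda) \to 0$ as $\lambda \downarrow 0$, placing us in the narrow-region regime. Argue by contradiction: let $\bar x \in \overline{\Sigma_\lambda}$ realize the overall minimum $m < 0$ across all antisymmetric differences; the exterior sign information forces $\bar x$ into the interior of $\Sigma_\lambda$. If the minimum is achieved at an integer-order level $W_{i_*}^\lambda$ with $i_* < s_0$, then $-\Delta W_{i_*}^\lambda(\bar x) \ge 0$ at the interior minimum forces $W_{i_*+1}^\lambda(\bar x) \ge 0$, so by walking up the chain we may assume $m$ is attained at the top (fractional) level. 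There the antisymmetric identity for $(-\Delta)^{s_1}$ at a negative interior minimum gives
\begin{equation*}
(-\Delta)^{s_1} W_{s_0}^\lambda(\bar x) \;\le\; 2 C_{n, s_1}\, m \int_{\{y_1 < \lambda\}} |\bar x - y^\lambda|^{-(n + 2s_1)}\,dy \;<\; 0,
\end{equation*}
while the equation yields $(-\Delta)^{s_1} W_{s_0}^\lambda(\bar x) = c_1^\lambda(\bar x)\, Z_0^\lambda(\bar x) \ge c_1^\lambda(\bar x)\, m$. Since $\|c_1^\lambda\|_{L^\infty}$ is controlled by the $L^\infty$ norms of $u, v$ and the exponents (all fixed for the given solution), choosing $\delta_0$ small enough compared to these data makes the first estimate dominate in absolute value and produces the contradiction. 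The same argument handles a minimum realized on the $V$-chain.

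Finally, the strong maximum principle applied equation by equation along the cascade — using that none of the differences can vanish identically in $\Sigma_\lambda$ (else tracing back through the chain would force $U_0 \equiv 0$ or $V_0 \equiv 0$ in $\Sigma_\lambda$, contradicting the positivity recorded above) — upgrades non-negativity to strict positivity, which is exactly the strict monotonicity of every $(-\Delta)^i u$ and $(-\Delta)^j v$ along $\nu^0$ in the uniform layer $\tilde\Sigma_{\delta_0}$. The main obstacle I anticipate is the joint narrow-region argument across operators of different orders: the fractional levels force one to work with the global antisymmetric identity and to control the exterior contribution, while the integer levels use purely local ellipticity, so the single contradiction must be orchestrated to propagate through both types of equations with one common choice of $\delta_0$ depending on $x^0$ and $\Omega$.
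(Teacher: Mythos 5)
The narrow-region argument you propose does not close, for two reasons. First, there is a sign error: at an interior negative minimum $\bar x$ of $W_{i_*}^\lambda$, the Hessian is positive semi-definite, so $\Delta W_{i_*}^\lambda(\bar x)\ge 0$, i.e.\ $-\Delta W_{i_*}^\lambda(\bar x)\le 0$ and therefore $W_{i_*+1}^\lambda(\bar x)\le 0$, not $\ge 0$. Second, even with the sign fixed, knowing $W_{i_*+1}^\lambda(\bar x)\le 0$ together with $W_{i_*+1}^\lambda(\bar x)\ge m$ (the overall minimum) only bounds this value in $[m,0]$; it does not show the minimum is attained at the top fractional level, nor that $W_{i_*+1}^\lambda$ has a negative minimum comparable to $m$. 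So the ``walking up the chain'' reduction is not justified, and the contradiction at the fractional level never becomes available. The paper's proof supplies precisely the missing quantitative propagation: introducing $\phi(x)=\cos\bigl(\tfrac{(x-x^0)\cdot\nu^0}{\delta}\bigr)$, one looks at negative minima of the quotients $W_i^\lambda/\phi$. Since $\nabla(W_i^\lambda/\phi)$ vanishes at such a point and $-\Delta\phi/\phi=1/\delta^2$, the computation yields
\[
W_{i+1}^\lambda(x_i)=-\Delta W_i^\lambda(x_i)\le \frac{W_i^\lambda(x_i)}{\delta^2}<0,
\]
which drives a \emph{strictly} negative value to the next level, scaled by $\delta^{-2}$. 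Iterating through the integer chain and then invoking the antisymmetric fractional estimate at the top produces the cumulative factor $\delta^{-(2s+2t)}$, and the contradiction follows for $\delta$ small. Without the quotient trick, there is no mechanism to force a comparably negative value upward through the integer-order equations.

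There is a second, independent gap. Your narrow-region estimate determines $\delta$ (and hence the layer) only after comparing against $\|c_1^\lambda\|_{L^\infty},\|c_2^\lambda\|_{L^\infty}$, i.e.\ against $p,q,\|u\|_{L^\infty},\|v\|_{L^\infty}$. That yields a layer depending on the particular solution, whereas the lemma asserts $\delta_0$ depends only on $x^0$ and $\Omega$. The paper resolves this with a second step: after the narrow-region start, define $\lambda_0:=\sup\{\lambda: W_i^\mu,Z_j^\mu\ge0 \text{ in }\Sigma_\mu\text{ for all }\mu\le\lambda\}$ and show, via the strong positivity from the narrow-region argument plus continuity in $\lambda$, that the plane can always be pushed a bit further; hence $\lambda_0$ equals the geometric threshold at which the reflection of $\Sigma_\lambda\cap\Omega$ first exits $\Omega$, which is determined solely by $x^0$ and the strict convexity of $\Omega$. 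Your proposal omits this continuation step entirely, so even granting the narrow-region estimate it would not produce a uniform $\delta_0$. Apart from these two issues the setup (passing to the $(U_i,V_j)$ cascade, the cooperativity of the linearized system, the use of the antisymmetric fractional identity, and the final strong-maximum-principle upgrade to strict inequality) is the same as in the paper.
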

\begin{proof}
\noindent\textbf{The case $s_0=t_0=1$.} Let $u_1=-\Delta u$. Then \eqref{eq2-1} can be reduced to the system
\begin{equation}\label{eq2-2}
\begin{cases}
-\Delta u(x)=u_1(x),&x\in\Omega,\\
(-\Delta)^{s_1} u_1(x)=v^p(x),&x\in\Omega, \\
-\Delta v(x)=v_1(x),&x\in\Omega,\\
(-\Delta)^{t_1} v_1(x)=u^q(x),&x\in\Omega, \\
u(x)=v(x)=u_1(x)=v_1(x)=0, &x\in \mathbb{R}^n\backslash \Omega,
\end{cases}
\end{equation}
Noting that $u_1$ satisfies
\begin{equation*}
\begin{cases}
(-\Delta)^{s_1} u_1(x)=v^p(x)>0,&x\in\Omega, \\
u_1(x)=0, &x\in\mathbb{R}^n\backslash \Omega,
\end{cases}
\end{equation*}
by the strong maximum principle for the fractional Laplacian (see \cite{CLM}), one has
$$
u_1(x)>0, x\in \Omega
$$
similarly, we have
$$
v_1(x)>0, x\in \Omega.
$$
For any $x^0\in \partial\Omega$, let $\nu^0$ be the unit inward normal vector of $\partial \Omega$ at $x^0$. We will show that there exists a constant $\delta_0>0$ depending only on $x^0$ and $\Omega$ such that $u(x)$  and $v(x)$ are monotone increasing along the inward normal direction in the region
\begin{eqnarray*}
\tilde\Sigma_{\delta_0}=\{x\in \overline{\Omega}\mid 0\leq(x-x^0)\cdot \nu^0\leq \delta_0\}.
\end{eqnarray*}

To this end, we define the moving plane by
$$
T_\lambda=\{x\in \mathbb{R}^n \mid (x-x^0)\cdot \nu^0=\lambda\},
$$
and
$$
\Sigma_\lambda=\{x\in \mathbb{R}^n \mid (x-x^0)\cdot \nu^0<\lambda\}
$$
for $\lambda>0,$ and let $x^\lambda$ be the reflection of the point $x$ about the plane $T_\lambda.$

To compare the values of the solution at two different points, between which one point is obtained from the reflection of the other, we define
$$
u_\lambda(x)=u(x^\lambda),\,\, (u_1)_\lambda(x)=u_1(x^\lambda),\,\, v_\lambda(x)=v(x^\lambda),\,\, (v_1)_\lambda(x)=v_1(x^\lambda),
$$
and
$$U^\lambda(x)=u_\lambda(x)-u(x), \quad  U_1^\lambda(x)=(u_1)_\lambda(x)-u_1(x),$$
$$V^\lambda(x)=v_\lambda(x)-v(x), \quad  V_1^\lambda(x)=(v_1)_\lambda(x)-v_1(x).$$
Then for any $\lambda$ such that the reflection of $\Sigma_\lambda \cap \Omega$ is contained in $\Omega$, we derive from \eqref{eq2-2} that
\begin{equation}\label{eq2-3}
\begin{cases}
-\Delta U^\lambda(x)=U_1^\lambda (x),&x\in\Sigma_\lambda \cap \Omega,\\
(-\Delta)^{s_1} U_1^\lambda(x)=p \xi_\lambda^{p-1}(x)V^\lambda(x),&x\in\Sigma_\lambda \cap \Omega, \\
-\Delta V^\lambda(x)=V_1^\lambda(x),&x\in\Sigma_\lambda \cap \Omega,\\
(-\Delta)^{t_1} V_1^\lambda(x)=q \zeta_\lambda^{q-1}(x)U^\lambda(x),&x\in\Sigma_\lambda \cap \Omega, \\
U^\lambda(x), V^\lambda(x), U_1^\lambda(x), V_1^\lambda(x)\geq 0, &x\in\Sigma_\lambda\backslash(\Sigma_\lambda \cap \Omega),
\end{cases}
\end{equation}
where $\xi_\lambda(x)$ lies in  $v(x)$ and $v_\lambda(x),$ and $\zeta_\lambda(x)$ is valued between $u(x)$ and $u_\lambda(x).$

\medskip

Next we divide the proof into two steps.

\medskip

\textit{Step 1.} In this step, we show that there exists a small constant $\delta>0$ such that
\begin{equation}\label{eq2-4}
U^\lambda(x),\; V^\lambda(x),\; U_1^\lambda(x),\; V_1^\lambda(x)\geq 0, \quad \forall \, x\in \Sigma_\lambda \cap \Omega, \,\, \forall \, 0<\lambda \leq \delta.
\end{equation}

Moreover, we actually have
\begin{eqnarray}\label{eq2-5}
U^\lambda(x),\; V^\lambda(x),\; U_1^\lambda(x),\; V_1^\lambda(x)>0, \quad \forall \, x\in \Sigma _\lambda\cap \Omega, \,\,  \forall\, 0<\lambda\leq \delta.
\end{eqnarray}

We first show \eqref{eq2-4}.

Indeed, \eqref{eq2-4} is a narrow region principle for the system \eqref{eq2-3}. Suppose \eqref{eq2-4} is not valid, we may assume that, for any $\delta>0$ small, there exists a $\lambda\in(0,\delta]$ such that
\begin{eqnarray}\label{eq2-7}
U^\lambda(x)<0 \,\, \mbox{somewhere in}\,\, \Sigma_\lambda \cap \Omega.
\end{eqnarray}
Otherwise, if $U^\lambda(x) \geq 0$ in $\Sigma_\lambda \cap \Omega$ for some $0<\lambda\leq \delta$, from \eqref{eq2-3} and applying the maximum principle for the regular Laplacian and the fractional Laplacian, we obtain
$$
V_1^\lambda(x)\geq 0,\,\, V^\lambda(x)\geq 0\,\,\mbox{and}\,\, U_1^\lambda(x)\geq 0, \quad \forall \, x\in \Sigma _\lambda\cap \Omega
$$
successively.

Let
$$
\phi(x)=\cos\frac{(x-x^0)\cdot \nu^0}{\delta}.
$$
Then it is obvious that
$$\phi(x)\in [\cos 1, 1],\,\, \frac{-\Delta \phi(x)}{\phi(x)}=\frac{1}{\delta^2}, \quad \forall \, x\in \Sigma_\lambda\cap\Omega. $$

Now we introduce the auxiliary function
$$
\overline {U^\lambda}(x)=\frac{ U^\lambda(x)}{\phi(x)}, \quad \forall \, x\in \Sigma_\lambda\cap\Omega.
$$
It follows from \eqref{eq2-7} that there exists a point $x_0\in \Sigma_\lambda \cap \Omega$ such that
$$
\overline {U^\lambda}(x_0)=\min_{\Sigma_\lambda\cap\Omega} \overline {U^\lambda}(x)<0.
$$
By a direct calculation at this  negative minimum point of $\overline {U^\lambda}(x),$ one has
\begin{equation}\label{eq2-8}
\begin{split}
U_1^\lambda(x_0)=&-\Delta U^\lambda(x_0)\\
=&-\Delta\overline{U^\lambda}(x_0)\phi(x_0)-2\nabla\overline{U^\lambda}(x_0)\cdot \nabla \phi(x_0)-\overline{U^\lambda}(x_0)\Delta\phi(x_0)\\
=&-\Delta\overline{U^\lambda}(x_0)\phi(x_0)-\overline{U^\lambda}(x_0)\Delta\phi(x_0)\\
\leq&-\overline{U^\lambda}(x_0)\Delta\phi(x_0)\\
=&\frac{U^\lambda(x_0)}{\delta^2}<0.
\end{split}
\end{equation}
Thus it indicates that there exists a point $x_1\in \Sigma_\lambda\cap \Omega$ such that
\begin{equation*}
U_1^\lambda(x_1)=\min_{\Sigma_\lambda}U_1^\lambda(x)<0.
\end{equation*}
Then at the negative minimum point of $U_1^\lambda(x),$ using \eqref{eq2-8}, the definition of the fractional Laplacian and the second equation in \eqref{eq2-3},  we obtain
\begin{equation}\label{eq2-10}
\begin{split}
p\xi_\lambda^{p-1}(x_1) V^\lambda(x_1)=&(-\Delta)^{s_1}U_1^\lambda(x_1) \\
=&C_{n, s_1}  PV \left(\int_{\Sigma_\lambda}\frac{U_1^\lambda(x_1)-U_1^\lambda(y)}{|x_1-y|^{n+2s_1}}\mathrm{d}y
+\int_{\Sigma^c_\lambda}\frac{U_1^\lambda(x_1)-U_1^\lambda(y)}{|x_1-y|^{n+2s_1}}\mathrm{d}y\right)\\
=&C_{n, s_1}  PV \left(\int_{\Sigma_\lambda}\frac{U_1^\lambda(x_1)-U_1^\lambda(y)}{|x_1-y|^{n+2s_1}}\mathrm{d}y
+\int_{\Sigma_\lambda}\frac{U_1^\lambda(x_1)+U_1^\lambda(y)}{|x_1-y^\lambda|^{n+2s_1}}\mathrm{d}y\right)\\
\leq &2C_{n, s_1}  U_1^\lambda(x_1) \int_{\Sigma_\lambda}\frac{1}{|x_1-y^\lambda|^{n+2s_1}}\mathrm{d}y\\
\leq&\frac{C_1 U_1^\lambda(x_1)}{\delta^{2s_1}}\\
\leq&\frac{C_1 U_1^\lambda(x_0)}{\delta^{2s_1}}\\
\leq&\frac{C_1 U^\lambda(x_0)}{\delta^{2+2s_1}}<0.
\end{split}
\end{equation}
Define
$$
\overline {V^\lambda}(x)=\frac{ V^\lambda(x)}{\phi(x)}, \quad \forall \, x\in \Sigma_\lambda\cap\Omega.
$$
We obtain from \eqref{eq2-10} that $\overline {V^\lambda}(x_1)<0$ and thus there exists a point $y_0\in \Sigma_\lambda \cap \Omega$ such that
\begin{equation}\label{eq2-11}
\overline {V^\lambda}(y_0)=\min_{\Sigma_\lambda\cap\Omega} \overline {V^\lambda}(x) <0.
\end{equation}
By a similar argument as \eqref{eq2-8}, we derive
\begin{equation}\label{eq2-12}
V_1^\lambda(y_0)=-\Delta V^\lambda(y_0)
\leq\frac{V^\lambda(y_0)}{\delta^2}<0.
\end{equation}
It follows that there exists a point $y_1 \in \Sigma_\lambda\cap \Omega$ such that
\begin{equation*}
V_1^\lambda(y_1)=\min_{\Sigma_\lambda}V_1^\lambda(x)<0.
\end{equation*}
Then by a similar argument as \eqref{eq2-10},   applying \eqref{eq2-11}, \eqref{eq2-12} and the fourth equation in \eqref{eq2-3}, we obtain
\begin{equation*}
q \zeta_\lambda^{q-1}(y_1)U^\lambda(y_1)=(-\Delta)^{t_1} V_1^\lambda(y_1)\leq\frac{ C_2 V_1^\lambda(y_1)}{\delta^{2t_1}}\leq\frac{ C_2 V_1^\lambda(y_0)}{\delta^{2t_1}}\leq\frac{ C_2 V^\lambda(y_0)}{\delta^{2+2t_1}}\leq\frac{ C_3 V^\lambda(x_1)}{\delta^{2+2t_1}}<0,
\end{equation*}
which combining with \eqref{eq2-10} yields that
$$
q \|u\|_{L^\infty(\overline{\Omega})}^{q-1}U^\lambda(y_1)\leq \frac{ C_3 V^\lambda(x_1)}{\delta^{2+2t_1}}\leq \frac{C_4 U^\lambda(x_0)}{p\|v\|_{L^\infty(\overline{\Omega})}^{p-1}\delta^{4+2s_1+2t_1}}.
$$
Since
 $$U^\lambda(y_1)=\overline{U^\lambda}(y_1)\phi(y_1)\geq \overline{U^\lambda}(x_0)\phi(y_1)=U^\lambda(x_0)\frac{\phi(y_1)}{\phi(x_0)}\geq C_5 U^\lambda(x_0),$$
 as a consequence,
 $$
 U^\lambda(x_0)\leq \frac{C_6 U^\lambda(x_0)}{pq\|u\|_{L^\infty(\overline{\Omega})}^{q-1}\|v\|_{L^\infty(\overline{\Omega})}^{p-1}\delta^{2s+2t}},
 $$
which is a contradiction if we let $\delta>0$ efficiently small such that
$$
0<\delta<\left(C_{6}^{-1}pq\|u\|_{L^\infty(\overline{\Omega})}^{q-1}\|v\|_{L^\infty(\overline{\Omega})}^{p-1}\right)^{-\frac{1}{2s+2t}}.
$$
Therefore, we conclude that \eqref{eq2-4} is true.

Second, we show \eqref{eq2-5}. Suppose on the contrary that, for some $\lambda\in(0,\delta]$, there exists a point $\xi_0\in \Sigma_\lambda \cap \Omega$ such that $U^\lambda(\xi_0)=0$ or $U_1^\lambda(\xi_0)=0$. If $U^\lambda(\xi_0)=0$, by \eqref{eq2-4}, we know that $\xi_0$ is one of the minimum points of $U^\lambda(x)$ in $\Sigma_{\lambda}$, it follows that
$$
U_1^\lambda(\xi_0)=-\Delta U^\lambda(\xi_0)\leq 0,
$$
hence one also has $U_1^\lambda(\xi_0)=0$. Using \eqref{eq2-4} again, we deduce that $\xi_0$ is also one of the minimum points of $U_1^\lambda(x)$ in $\Sigma_{\lambda}$, i.e.,
$$
U_1^\lambda(\xi_0)=0=\min_{\Sigma_{\lambda}}U_1^\lambda(x),
$$
hence
\begin{equation}\label{eq2-13}
\begin{split}
&(-\Delta)^{s_1} U_1^\lambda(\xi_0)\\
=&C_{n, s_1} PV \int_{\mathbb{R}^n}\frac{- U_1^\lambda(y)}{|\xi_0-y|^{n+2s_1}}\mathrm{d}y\\
=& C_{n, s_1} PV \left(\int_{\Sigma_\lambda}\frac{- U_1^\lambda(y)}{|\xi_0-y|^{n+2s_1}}\mathrm{d}y
+\int_{\Sigma_\lambda}\frac{-U_1^\lambda(y^\lambda)}{|\xi_0-y^\lambda|^{n+2s_1}}\mathrm{d}y\right)\\
=& C_{n, s_1} PV \int_{\Sigma_\lambda}\left(\frac{1}{|\xi_0-y^\lambda|^{n+2s_1}}-\frac{1}{|\xi_0-y|^{n+2s_1}}\right) U_1^\lambda(y)\mathrm{d}y.
\end{split}
\end{equation}
Since
$$
\frac{1}{|\xi_0-y^\lambda|^{n+2s_1}}-\frac{1}{|\xi_0-y|^{n+2s_1}}<0, \quad \forall \, y\in \Sigma_\lambda
$$
and $U_1^\lambda(x)>0$ on $\partial\Omega\cap\Sigma_\lambda$, then \eqref{eq2-13} indicates that
$$
(-\Delta)^{s_1} U_1^\lambda(\xi_0)<0.
$$
From the second equation in \eqref{eq2-3}, we have
$$V^\lambda(\xi_0)<0,$$
which contradicts \eqref{eq2-4}. Therefore, for any $\lambda\in(0,\delta]$, $U^\lambda(x)>0$ and $U_1^\lambda(x)>0$ in $\Sigma_\lambda\cap\Omega$. Similarly, suppose that there exists a point $\xi_1\in \Sigma_\lambda \cap \Omega$ such that $V^\lambda(\xi_{1})=0$ or $V_1^\lambda(\xi_{1})=0$, we can also derive a contradiction. Thus $V^\lambda(x)>0$ and $V_1^\lambda(x)>0$ in $\Sigma_\lambda\cap\Omega$ for any $\lambda\in(0,\delta]$. Therefore, we have proved \eqref{eq2-5}.

\medskip

\textit{Step 2.} In this step, we keep moving the plane continuously along the inward normal direction $x^0$ to its limiting position as long as the inequalities
\begin{equation}\label{eq2-14}
U^\lambda(x),\; V^\lambda(x),\; U_1^\lambda(x),\; V_1^\lambda(x)\geq 0, \quad \forall \, x\in \Sigma_\lambda
\end{equation}
hold. We will prove that the moving plane procedure can be carried on (with the property \eqref{eq2-14}) as long as the reflection of $\Sigma_\lambda\cap \Omega$ is contained in $\Omega$.

To this end, define
$$
 \lambda_0:=\sup\left\{\lambda \mid U^\mu (x), U_1^\mu(x),  V^\mu (x), V_1^\mu (x) \geq 0, \,\,\forall \,x \in \Sigma_\mu, \,\, \forall \,\mu \leq \lambda \right\},
$$
then $\lambda_{0}\geq\delta$ and $U^{\lambda_0}(x),\; V^{\lambda_0}(x),\;U_1^{\lambda_0}(x),\; V_1^{\lambda_0}(x)\geq0$ in $\Sigma_{\lambda_{0}}$. Suppose on the contrary that there is a small constant $\varepsilon>0$ such that the reflection of domain $\Sigma_{\lambda_0+\varepsilon}\cap \Omega$ is still contained in $\Omega$. From \eqref{eq2-5} and its proof in Step 1, we can deduce that
\begin{eqnarray*}
U^{\lambda_0}(x),\; V^{\lambda_0}(x),\;U_1^{\lambda_0}(x),\; V_1^{\lambda_0}(x) >0 \quad \mbox{ in } \, \Sigma_{\lambda_0}\cap \Omega.
\end{eqnarray*}

It follows that for any $\sigma\in(0,\lambda_{0})$ small, there exists a positive constant $c_\sigma$ such that
\begin{eqnarray*}
U^{\lambda_0}(x),\; V^{\lambda_0}(x),\;U_1^{\lambda_0}(x),\; V_1^{\lambda_0}(x) \geq c_\sigma>0 \quad \mbox{ in } \, \overline{\Sigma_{\lambda_0-\sigma}\cap \Omega}.
\end{eqnarray*}
Then the continuity of $U^\lambda,\,\, V^\lambda,\, U^\lambda_1$ and $V^\lambda_1$ with respect to $\lambda$ yields that, there exists a $0<\eta<\min\{\varepsilon,\sigma\}$ small such that
\begin{eqnarray*}
U^{\lambda}(x),\; V^{\lambda}(x),\;U_1^{\lambda}(x),\; V_1^{\lambda}(x) \geq 0, \quad \forall \, x\in \overline{\Sigma_{\lambda_0-\sigma}\cap \Omega}, \,\,\, \forall \, \lambda\in(\lambda_0, \lambda_0+\eta].
\end{eqnarray*}
For any $\lambda\in(\lambda_0, \lambda_0+\eta]$, let
$$
D_{\lambda}:=\left(\Sigma_{\lambda} \setminus \overline{\Sigma_{\lambda_0-\sigma}}\right)\cap \Omega.
$$
If $\sigma$ is sufficiently small, $D_{\lambda}$ is a narrow region. Through entirely similar proof as Step 1, one can obtain that
$$U^\lambda (x),\, V^\lambda (x),\, U_1^\lambda (x),\, V_1^\lambda (x) \geq 0, \quad \forall \, x \in D. $$
It follows immediately that
$$U^\lambda (x),\, V^\lambda (x),\, U_1^\lambda (x),\, V_1^\lambda (x) \geq 0, \quad \forall \, x \in \Sigma_\lambda\cap\Omega, \,\,\, \forall \, \lambda\in(\lambda_0, \lambda_0+\eta],$$
which indicates that the plane $T_\lambda$ can still be moved inward a little bit from $T_{\lambda_0}$ with \eqref{eq2-14} and contradicts the definition of $\lambda_0$.

\medskip

In summary, there exists a constant $\delta_0>0$ depending only on $x^0$ and $\Omega$ such that $u(x), v(x), -\Delta u(x)$ and $-\Delta v(x)$
are strictly monotone increasing along the inward normal direction $\nu^0$ in the region
$$
\tilde \Sigma_{\delta_0}=\left\{x\in\overline{\Omega} \mid 0\leq (x-x^0)\cdot \nu^0\leq \delta_0\right\}.
$$

\medskip

\noindent\textbf{The case $s_0t_0>1$.} In contrast with the cases $s_0>1$ and $t_0>1$, the cases $s_0=1$ and $t_0>1$ \emph{or} $t_0=1$ and $s_0>1$ are simple. Therefore, without loss of generality, we may assume that $s_0>1$ and $t_0>1$.

For $i=1,\cdots, s_0$ and $j=1,\cdots, t_0$, let
$$u_i(x):=(-\Delta)^i u(x)  \quad \mbox{and} \quad v_j(x)=(-\Delta)^j v(x) $$
and
$$
U_i^\lambda(x)=u_i(x^\lambda)-u_i(x)  \quad \mbox{and} \quad V_j^\lambda(x)=v_j(x^\lambda)-v_j(x).
$$
For any $\lambda$ such that the reflection of $\Sigma_\lambda \cap \Omega$ is contained in $\Omega$,  we derive from \eqref{eq2-1} that
\begin{equation}\label{eq2-17}
\begin{cases}
(-\Delta)^{s_1} U_{s_0}^\lambda(x)=p \xi_\lambda^{p-1}(x)V^\lambda(x),&x\in\Sigma_\lambda \cap \Omega, \\
-\Delta U_{s_0-1}^\lambda(x)=U_{s_0}^\lambda(x) ,&x\in\Sigma_\lambda \cap \Omega,\\
\cdots\cdots\\
-\Delta U^\lambda(x)=U_1^\lambda (x),&x\in\Sigma_\lambda \cap \Omega,\\
(-\Delta)^{t_1} V_{t_0}^\lambda(x)=q \zeta_\lambda^{q-1}(x)U^\lambda(x),&x\in\Sigma_\lambda \cap \Omega, \\
-\Delta V_{t_0-1}^\lambda(x)=V_{t_0}^\lambda(x) ,&x\in\Sigma_\lambda \cap \Omega,\\
\cdots\cdots\\
-\Delta V^\lambda(x)=V_1^\lambda (x),&x\in\Sigma_\lambda \cap \Omega,\\
U^\lambda(x), V^\lambda(x), U_i^\lambda(x), V_j^\lambda(x)\geq 0, \,\, i=1,\cdots,s_0,\,j=1,\cdots, t_0,  &x\in\Sigma_\lambda\backslash(\Sigma_\lambda \cap \Omega),
\end{cases}
\end{equation}
where $\xi_\lambda(x)$ lies in $v(x)$ and $v_\lambda(x),$ and $\zeta_\lambda(x)$ is valued between $u(x)$ and $u_\lambda(x).$

Analogously, we construct the auxiliary functions:
$$
\overline{U^\lambda}(x)=\frac{U^\lambda(x)}{\phi(x)}, \quad \overline{V^\lambda}(x)=\frac{V^\lambda(x)}{\phi(x)}
$$
and
$$
\overline{U_i^\lambda}(x)=\frac{U_i^\lambda(x)}{\phi(x)}, \quad \overline{V_j^\lambda}(x)=\frac{V_j^\lambda(x)}{\phi(x)}
$$
for $i=1,\cdots, s_0-1, j=1,\cdots t_0-1 $ and any $x \in \Sigma_\lambda\cap \Omega.$

If $U^\lambda(x) <0$ somewhere in $\Sigma_\lambda \cap \Omega$, by a similar argument as in the case $s_0=t_0=1$, we derive from \eqref{eq2-17} that there exists a point $x_0\in \Sigma_\lambda\cap\Omega$ such that
$$
\overline{U^\lambda}(x_0)=\min_{\Sigma_\lambda\cap\Omega}\overline{U^\lambda}(x)<0,
$$
and
$$
U_1^\lambda(x_0)\leq \frac{U^\lambda(x_0)}{\delta^2}<0,
$$
which indicates that there exists a point $x_1\in \Sigma_\lambda\cap\Omega$ such that
$$
\overline{U_1^\lambda}(x_1)=\min_{\Sigma_\lambda\cap\Omega}\overline{U_1^\lambda}(x)<0.
$$
Then at the minimum point $x_1$, one has
$$
U_2^\lambda(x_1)\leq \frac{U_1^\lambda(x_1)}{\delta^2}<0.
$$
Repeating the process in this way, we conclude that there exist $x_i\in \Sigma_\lambda\cap\Omega$ ($i=1, \cdots, s_0-1$) such that
$$
\overline{U_{i}^\lambda}(x_i)=\min_{\Sigma_\lambda\cap\Omega}\overline{U_{i}^\lambda}(x)<0, \quad i=1, \cdots, s_0-1,
$$
and
$$
U_{i}^\lambda(x_{i-1})\leq \frac{U_{i-1}^\lambda(x_{i-1})}{\delta^2}<0, \quad i=2, \cdots, s_0.
$$
Ultimately, we obtain that there exist a point $x_{s_0}\in \Sigma_\lambda\cap\Omega$ such that
$$
{U_{s_0}^\lambda}(x_{s_0})=\min_{\Sigma_\lambda}{U_{s_0}^\lambda}(x)<0,
$$
and
\begin{equation}\label{eq2-18}
\begin{split}
p\xi_\lambda^{p-1}(x_{s_0}) V^\lambda(x_{s_0})=&(-\Delta)^{s_1}U_{s_0}^\lambda(x_{s_0})
\leq\frac{C_7 U_{s_0}^\lambda(x_{s_0})}{\delta^{2s_1}}
\leq\frac{C_7 U_{s_0}^\lambda(x_{s_0-1})}{\delta^{2s_1}}
\leq\frac{C_7 U_{s_0-1}^\lambda(x_{s_0-1})}{\delta^{2+2s_1}}\\
\leq&\frac{C_8 U_{s_0-1}^\lambda(x_{s_0-2})}{\delta^{2+2s_1}}
\leq\frac{C_8 U_{s_0-2}^\lambda(x_{s_0-2})}{\delta^{4+2s_1}}
\leq \cdots
\leq\frac{C_9 U^\lambda(x_0)}{\delta^{2s}}<0,
\end{split}
\end{equation}
which implies that there exists a point $y_0\in \Sigma_\lambda \cap \Omega$ such that
\begin{equation*}
\overline {V^\lambda}(y_0)=\min_{\Sigma_\lambda\cap\Omega} \overline {V^\lambda}(x) <0.
\end{equation*}
Then arguing similarly as in deriving \eqref{eq2-18}, there exist a sequence of $\{y_j\} \subset \Sigma_\lambda\cap \Omega$ ($j=1, \cdots, t_0$) such that
$$
\overline {V_j^\lambda}(y_j)=\min_{\Sigma_\lambda\cap\Omega} \overline {V_j^\lambda}(x)<0, \quad j=1, \cdots, t_0,
$$
and
$$
V_{1}^\lambda(y_{0})\leq \frac{V^\lambda(y_{0})}{\delta^2}<0, \quad V_{j}^\lambda(y_{j-1})\leq \frac{V_{j-1}^\lambda(y_{j-1})}{\delta^2}<0, \quad j=2, \cdots, t_0,
$$
and
\begin{equation*}
\begin{split}
q \zeta_\lambda^{q-1}(y_{t_0})U^\lambda(y_{t_0})=&(-\Delta)^{t_1}V_{t_0}^\lambda(y_{t_0})
\leq\frac{C_{10} V_{t_0}^\lambda(y_{t_0})}{\delta^{2t_1}}
\leq\frac{C_{10} V_{t_0}^\lambda(y_{t_0-1})}{\delta^{2t_1}}
\leq\frac{C_{10} V_{t_0-1}^\lambda(y_{t_0-1})}{\delta^{2+2t_1}}\\
\leq&\frac{C_{11} V_{t_0-1}^\lambda(y_{t_0-2})}{\delta^{2+2t_1}}
\leq\frac{C_{11} V_{t_0-2}^\lambda(y_{t_0-2})}{\delta^{4+2t_1}}
\leq \cdots
\leq\frac{C_{12} V^\lambda(y_0)}{\delta^{2t}}\leq\frac{C_{13} V^\lambda(x_{s_0})}{\delta^{2t}}<0,
\end{split}
\end{equation*}
which together with \eqref{eq2-18} yields
 $$
 U^\lambda(x_0)\leq \frac{C_{14}U^\lambda(x_0)}{pq\|u\|_{L^\infty(\overline{\Omega})}^{q-1}\|v\|_{L^\infty(\overline{\Omega})}^{p-1}\delta^{2s+2t}},
 $$
which is a contradiction if we let $\delta>0$ efficiently small such that
$$
0<\delta<\left(C_{14}^{-1}pq\|u\|_{L^\infty(\overline{\Omega})}^{q-1}\|v\|_{L^\infty(\overline{\Omega})}^{p-1}\right)^{-\frac{1}{2s+2t}}.
$$
Therefore, there exists a $\delta>0$ small enough such that for any $\lambda\in(0,\delta]$,
\begin{eqnarray*}
U^\lambda(x),\; V^\lambda(x),\; U_i^\lambda(x) \; V_j^\lambda(x) \geq 0, \,\,\, i=1,\cdots , s_0, \;j=1,\cdots, t_0, \quad \forall \, x\in \Sigma_\lambda.
\end{eqnarray*}

Finally, through a similar argument as the case $s_0=t_0=1$, we obtain that there exists a constant $\delta_0>0$ depending only on $x^0$ and $\Omega$ such that $(-\Delta)^i u(x)$ ($i=0, \cdots, s_0$) and $(-\Delta)^j v(x)$ ($j=0,\cdots , t_0$) are strictly monotone increasing along the inward normal direction in the region
$$
\tilde\Sigma_{\delta_0}=\left\{x\in \overline{\Omega}\mid 0\leq(x-x^0)\cdot \nu^0\leq \delta_0\right\}.
$$
This completes the proof of Lemma \ref{le2.1}.
\end{proof}

\begin{proof}
[Proof of Theorem \ref{th1.1}]
Theorem \ref{th1.1} is a direct consequence of Lemma \ref{le2.1} by letting $s=t=\frac{n}{2}$ with $n\geq3$ odd and thus concludes the proof.
\end{proof}

\subsection{Case 2: $n\geq4$ is even.}

If $n\geq 4$ is even, then the system \eqref{eq1-4} consists of two (higher) integer order equations, we can also obtain monotonicity results. Consequently, we shall give the proof of Theorem \ref{th1.2}.

We can derive monotonicity results for solutions to more general (higher) integer order system (see \eqref{eq2-1a}) than \eqref{eq1-1} with $n\geq4$ even.
\begin{lemma}\label{le2.1a}
Let $n\geq1$, $s$, $t\geq1$ be integers and $\Omega \subset \mathbb{R}^n$ be a bounded and strictly convex domain with smooth boundary. Assume that $(u,v)\in \left(C^{2s}(\Omega)\cap C^{2s-2}(\overline{\Omega})\right)\times\left(C^{2t}(\Omega)\cap C^{2t-2}(\overline{\Omega})\right)$ is a pair of positive solution to
\begin{equation}\label{eq2-1a}
\begin{cases}
(-\Delta)^s u(x)=v^p(x),&x\in\Omega,\\
(-\Delta)^t v(x)=u^q(x),&x\in\Omega, \\
u(x)=-\Delta u(x)=\cdots =(-\Delta)^{s-1}u(x)=0, &x\in\partial\Omega,\\
v(x)=-\Delta v(x)=\cdots =(-\Delta)^{t-1}v(x)=0, &x\in\partial\Omega.
\end{cases}
\end{equation}
Then for any  $x^0 \in \partial \Omega,$ there exists a constant  $\delta_0>0$ depending only on $x^0$ and $\Omega$ such that $(-\Delta)^i u(x)$ and $(-\Delta)^j v(x)$ ($i=0,\cdots, s-1$, $j=0,\cdots, t-1$) are strictly monotone increasing along the inward normal direction $\nu^0$ in the region
$$
\tilde{\Sigma}_{\delta_0}=\{x\in \bar \Omega \mid 0 \leq (x-x^0)\cdot \nu^0 \leq \delta_0\}.
$$
\end{lemma}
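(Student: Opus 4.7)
My plan is to follow the blueprint of Lemma \ref{le2.1} essentially verbatim, with the important simplification that every operator now is a regular Laplacian, so no nonlocal principal-value integrals need to be split. First I would decompose \eqref{eq2-1a} into a chain of second order Poisson problems by setting
$$
u_i(x) := (-\Delta)^i u(x),\quad 0 \leq i \leq s-1, \qquad v_j(x) := (-\Delta)^j v(x),\quad 0 \leq j \leq t-1,
$$
so that the system reads $-\Delta u_i = u_{i+1}$ for $0\leq i\leq s-2$, $-\Delta u_{s-1} = v^p$, together with the analogous chain for the $v_j$'s, under Navier boundary conditions $u_i = v_j = 0$ on $\partial\Omega$. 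Iterated strong maximum principle then gives $u_i, v_j > 0$ in $\Omega$, legitimizing the mean-value-theorem linearization of the nonlinear couplings.

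For fixed $x^0 \in \partial\Omega$ with inward normal $\nu^0$, I introduce the moving plane $T_\lambda$, the cap $\Sigma_\lambda$, the reflection $x^\lambda$, and the differences $U_i^\lambda(x) := u_i(x^\lambda) - u_i(x)$, $V_j^\lambda(x) := v_j(x^\lambda) - v_j(x)$. For every $\lambda$ such that the reflection of $\Sigma_\lambda \cap \Omega$ lies in $\Omega$, these satisfy the same chain of Poisson equations, now closed by the two coupling relations $-\Delta U_{s-1}^\lambda = p\,\xi_\lambda^{p-1} V^\lambda$ and $-\Delta V_{t-1}^\lambda = q\,\zeta_\lambda^{q-1} U^\lambda$, with intermediate $\xi_\lambda,\zeta_\lambda$ between the corresponding values of $v,v_\lambda$ and $u,u_\lambda$. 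On the boundary of $\Sigma_\lambda \cap \Omega$, each $U_i^\lambda, V_j^\lambda$ is nonnegative: they vanish on $T_\lambda$, while on $\partial \Omega \cap \Sigma_\lambda$ the Navier condition gives $u_i(x) = v_j(x) = 0$ and positivity inside $\Omega$ gives $u_i(x^\lambda), v_j(x^\lambda) > 0$.

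\textbf{Step 1} is a narrow region principle: there exists $\delta>0$ such that for every $\lambda \in (0,\delta]$, all $U_i^\lambda, V_j^\lambda \geq 0$ in $\Sigma_\lambda \cap \Omega$. Arguing by contradiction and using the barrier $\phi(x) = \cos\!\big((x-x^0)\cdot \nu^0/\delta\big)$ together with the quotients $\overline{U_i^\lambda} = U_i^\lambda/\phi$, $\overline{V_j^\lambda} = V_j^\lambda/\phi$, the chain argument proceeds exactly as in the case $s_0>1,\,t_0>1$ of Lemma \ref{le2.1}: at a negative minimum of $\overline{U^\lambda}$ one picks up $U_1^\lambda \leq \delta^{-2} U^\lambda$, and iterating down the $s$-chain yields negative values of $U_i^\lambda$ at points $x_i$ carrying a factor $\delta^{-2i}$; the coupling equation then forces $V^\lambda$ to attain a negative quotient minimum, the $t$-chain contributes another $\delta^{-2t}$, and the second coupling closes the loop to give
$$
|U^\lambda(x_0)| \,\leq\, \frac{C_0\,|U^\lambda(x_0)|}{pq\,\|u\|_{L^\infty(\overline\Omega)}^{q-1}\|v\|_{L^\infty(\overline\Omega)}^{p-1}\,\delta^{2(s+t)}},
$$
a contradiction once $\delta$ is chosen small enough. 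Strict positivity in $\Sigma_\lambda \cap \Omega$ follows from the standard Hopf/strong maximum principle applied at any hypothetical interior zero and propagated up the chain.

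\textbf{Step 2} is the continuation: define $\lambda_0 := \sup\{\lambda : U_i^\mu, V_j^\mu \geq 0 \text{ in } \Sigma_\mu\cap\Omega,\ \forall\mu\leq\lambda\}$, which is at least $\delta$ by Step 1. If the reflection of $\Sigma_{\lambda_0}\cap\Omega$ were still strictly inside $\Omega$, strict positivity at $\lambda_0$ plus continuity in $\lambda$ would give a uniform positive lower bound $c_\sigma>0$ for all $2(s+t)$ differences on $\overline{\Sigma_{\lambda_0-\sigma}\cap\Omega}$, and the remaining thin strip $(\Sigma_\lambda\setminus\overline{\Sigma_{\lambda_0-\sigma}})\cap\Omega$ would be narrow enough for Step 1 to apply, contradicting maximality of $\lambda_0$. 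Hence $\lambda_0$ equals the purely geometric limit determined by strict convexity at $x^0$, giving $\delta_0$ depending only on $x^0$ and $\Omega$, and yielding monotonicity of every $(-\Delta)^i u$ and $(-\Delta)^j v$ along $\nu^0$ in $\tilde\Sigma_{\delta_0}$. Theorem \ref{th1.2} is the special case $s = t = n/2$. The main obstacle is the bookkeeping in Step 1, where one must chain $s+t$ quotient-minimum arguments while losing a factor $\delta^{-2}$ at each step; the initial $\delta$ therefore depends on $p,q,\|u\|_\infty,\|v\|_\infty$, but Step 2 erases this dependence in the final monotonicity thickness.
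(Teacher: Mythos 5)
Your proposal is correct and follows the same route the paper takes: reduce \eqref{eq2-1a} to a chain of second-order Poisson problems, run the quotient-minimum narrow-region argument of Lemma \ref{le2.1} down the full chain (now including quotient functions $\overline{U_{s-1}^\lambda},\overline{V_{t-1}^\lambda}$, since the top equations are local), and then carry out the standard continuation step. One minor slip: in your displayed contradiction inequality, once you pass to absolute values of the negative quantity $U^\lambda(x_0)$ the direction should flip, i.e.\ $|U^\lambda(x_0)| \geq C_0|U^\lambda(x_0)|/(pq\|u\|_\infty^{q-1}\|v\|_\infty^{p-1}\delta^{2(s+t)})$, which is what contradicts smallness of $\delta$; as written with $\leq$ the inequality is harmless, not contradictory.
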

\begin{proof}
The proof of Lemma \ref{le2.1a} is similar to Lemma \ref{le2.1}, the only difference is that when we deal with $U^\lambda_{s_0}(x)$ and $V^\lambda_{t_0}(x)$ with integers $s_{0}:=s-1\geq0$ and $t_{0}:=t-1\geq0$, it is also necessary to introduce the auxiliary functions
$$
\overline{U_{s_0}^\lambda}(x)=\frac{U_{s_0}^\lambda(x)}{\phi(x)}\,\,\,\mbox{and}\,\,\, \overline{V_{t_0}^\lambda}(x)=\frac{V_{t_0}^\lambda(x)}{\phi(x)},
$$
where $U_{0}^\lambda:=U^\lambda$ and $V_{0}^\lambda:=V^\lambda$. In this way, we are able to obtain the estimates
$$
-\Delta U_{s_0}^\lambda(x_{s_0})
\leq\frac{C U_{s_0}^\lambda(x_{s_0})}{\delta^2}
\,\,\,\mbox{and}\,\,\,-\Delta V_{t_0}^\lambda(x_{t_0})
\leq\frac{C V_{t_0}^\lambda(x_{t_0})}{\delta^2},
$$
and the rest of the proof is entirely similar to Lemma \ref{le2.1}. This finishes the proof of Lemma \ref{le2.1a}.
\end{proof}

\begin{proof}
[Proof of Theorem \ref{th1.2}]
Theorem \ref{th1.2} is a direct consequence of Lemma \ref{le2.1a} by letting $s=t=\frac{n}{2}$ with $n\geq4$ even and thus concludes the proof.
\end{proof}

\section{Relationships between $\|u\|_{L^\infty}$ and $\|v\|_{L^\infty}$}

In this section, we mainly focus on the preliminary relationships between $\|u\|_{L^\infty}$ and $\|v\|_{L^\infty}$. More complicated and precise relationships will be given in Section 4. By constructing different suitable  super-solutions for odd $n$  and even $n$  respectively and using the maximum principles for the higher order equations, we derive that $\|u\|_{L^\infty}$ can be controlled by  $\|v\|_{L^\infty}$, and $\|v\|_{L^\infty}$ can be controlled by  $\|u\|_{L^\infty}$ in turn. In the proof, we also use the classical existence theorem and the representation of solutions for the Dirichlet problems.

\medskip

For any $R>0$, recall the Green function $G_{2, R}(x, y)$ for the $-\Delta$ in the ball $B_{R}(0)\subset \mathbb{R}^n$ with $n\geq 3$:
\begin{equation*}
G_{2, R}(x, y)=\frac{1}{(2-n)\omega_n}\left(|y-x|^{2-n}-\left|\frac{|x|}{R}y-\frac{R}{|x|}x\right|^{2-n}\right), \quad  x\in B_R(0)\backslash \{0\},\,\, y\in B_R(0)\backslash \{x\},
\end{equation*}
and
\begin{equation}\label{eq3.1}
G_{2, R}(0,y)=\frac{1}{(2-n)\omega_n}\left(|y|^{2-n}-R^{2-n}\right), \quad \forall \, y\in  B_R(0)\backslash \{0\},
\end{equation}
where $\omega_n$ denotes the area of the unit sphere in $\mathbb{R}^{n}$.

If $n\geq3$ is odd, then the Green function $G_{n-1, R}(x, y)$ for $(-\Delta)^\frac{n-1}{2}$ on the ball $B_{R}(0)$ is given by
\begin{equation}\label{eq3.2}
\begin{split}
&G_{n-1, R}(x, y)\\
=&\int_{B_R(0)}\left(\int_{B_R(0)}\left(\cdots\left(\int_{B_R(0)}\left(\int_{B_R(0)}G_{2, R}(x, z^1) G_{2, R}(z^1, z^2)dz^1   \right)       G_{2, R}(z^2, z^3)dz^2  \right)\cdots\right)
\right.\\
&\left.
\times G_{2, R}(z^{\frac{n-5}{2}}, z^\frac{n-3}{2})dz^{\frac{n-5}{2}}\right)G_{2, R}(z^{\frac{n-3}{2}}, y)dz^{\frac{n-3}{2}}.
\end{split}
\end{equation}
Let
\begin{equation}\label{eq3.2-1}
\phi_R(x)=
\begin{cases}
(R^2-|x|^2)^{\frac{1}{2}},&x\in B_{R}(0),\\
0,& x\in \mathbb{R}^n\backslash B_{R}(0).
\end{cases}
\end{equation}
It is well-known that (see \cite{RS})
\begin{equation}\label{eq3.3}
(-\Delta)^{\frac{1}{2}}\phi_R(x)=c_0, \quad \forall \, x\in B_{R}(0)
\end{equation}
with $c_0$ independent of $R$ and  easy to check that $\phi_R(x)$ is H\"{o}lder continuous on $\partial B_{R}(0).$
Therefore, by the classical existence theorem for the  Poisson equation (see e.g. \cite{GT}) and an iteration process, there exists a nonnegative function $h_{R}(x)$ such that
\begin{equation}\label{a1}
  (-\Delta)^{i} h_{R}(x)\geq 0,\,\,\, i=0,\cdots, \frac{n-3}{2}, \quad \forall \, x\in B_{R}(0),
\end{equation}
and
\begin{equation}\label{a2}
  (-\Delta)^{\frac{n-1}{2}} h_{R}(x)= \phi_R(x), \quad \forall \, x\in B_{R}(0), \quad \text{if} \,\, n\,\, \mbox{is odd}.
\end{equation}
Indeed, $h_{R}(x)$ can be represented via the Green function $G_{n-1, R}(x, y)$  in \eqref{eq3.2} as
\begin{equation}\label{eq3.3-1}
h_{R}(x)=\int_{B_{R}(0)} G_{n-1, R}(x, y)\phi_R(y)\mathrm{d}y,
\end{equation}
moreover, by \eqref{eq3.1}, we have
\begin{equation}\label{eq3.3-2}
h_{R}(0)=\int_{B_{R}(0)} G_{n-1, R}(0, y)\phi_R(y)\mathrm{d}y=R^{n}\int_{B_{1}(0)} G_{n-1, 1}(0, y)\phi_1(y)\mathrm{d}y:=K_1R^{n},
\end{equation}
where $K_1$ is a positive constant depending only on $n$ which is defined by
\begin{equation}\label{eq3.4}
K_1:=\int_{B_{1}(0)} G_{n-1, 1}(0, y)\phi_1(y)\mathrm{d}y.
\end{equation}

\medskip

If $n\geq4$ is even, we define
\begin{equation}\label{eq3.5}
K_2:=\int_{B_1(0)} G_{n, 1}(0, y)\mathrm{d}y,
\end{equation}
where the Green function $G_{n, R}(x, y)$ for $(-\Delta)^\frac{n}{2}$ on the ball $B_{R}(0)$ can be represented as
\begin{equation*}
\begin{split}
&G_{n, R}(x, y)\\
=&\int_{B_R(0)}\left(\int_{B_R(0)}\left(\cdots\left(\int_{B_R(0)}\left(\int_{B_R(0)}G_{2, R}(x, z^1) G_{2, R}(z^1, z^2)dz^1\right)G_{2, R}(z^2, z^3)dz^2  \right)\cdots\right)
\right.\\
&\left.
\times G_{2,R}(z^{\frac{n}{2}-2}, z^{\frac{n}{2}-1})dz^{\frac{n}{2}-2}\right)G_{2, R}(z^{\frac{n}{2}-1}, y)dz^{\frac{n}{2}-1}.
\end{split}
\end{equation*}
It is easy to check that $K_2$ is a positive constant depending only on $n$.

\medskip

Let
\begin{equation}\label{eq3.6}
\sigma:=
\begin{cases}
\min\left\{\frac{1}{4}, \left(\frac{c_0}{2K_1}\right)^{\frac{1}{n}}\right\},&\mbox{if }\,\, n\,\, \mbox{is odd}; \\
\min\left\{\frac{1}{4}, \left(\frac{1}{2K_2}\right)^{\frac{1}{n}}\right\},&\mbox{if }\,\, n\,\, \mbox{is even};
\end{cases}
\end{equation}
where $c_0$, $K_1$ and $K_{2}$ are presented in \eqref{eq3.3}, \eqref{eq3.4} and \eqref{eq3.5} respectively.

\medskip

Without loss of generality, we may assume that
$$\Omega \subset B_{\sigma}(0)$$
by rescaling, where $\sigma$ is defined in \eqref{eq3.6}.
In fact, let $\rho:=\rho(\Omega)\geq1$ be the smallest radius such that $\Omega \subset B_{\sigma\rho}(0)$. Let
$$
u_{\rho}(x)=\rho^{\frac{n(p+1)}{pq-1}}u(\rho x)\,\,\, \mbox{and}\,\,\, v_{\rho}(x)=\rho^{\frac{n(q+1)}{pq-1}}v(\rho x)
$$
Then
$$
(-\Delta)^{\frac{n}{2}} u_{\rho}(x)= v_\rho^p(x) \,\,\, \mbox{and}\,\,\,(-\Delta)^{\frac{n}{2}} v_{\rho}(x)= u_\rho^q(x),
$$
it follows that $(u_\rho(x), v_\rho(x))$ is also a pair of solution to \eqref{eq1-1} in $\rho^{-1}\Omega \subset B_{\sigma}(0),$
and thus we only need to consider  $(u_\rho(x), v_\rho(x))$ instead.

\medskip

In what follows, we shall denote the maxima of $u$ and $v$ respectively by
$$
M:=\max_{\overline{\Omega}} u \quad \mbox{and} \quad N:=\max_{\overline{\Omega}} v.
$$

By rescaling and constructing auxiliary functions, we can derive some preliminary relationships between $M$  and $N.$

\begin{lemma}\label{le3.2}
Let $n\geq3$ and $\Omega\subset \mathbb{R}^n$ be a bounded domain with smooth boundary and $(u, v)$ be a pair of positive classical solution to \eqref{eq1-1}.
Then
$$
M\leq \frac{1}{2} N^p \quad \mbox{and} \quad N\leq \frac{1}{2} M^q.
$$
\end{lemma}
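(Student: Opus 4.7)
The proof should exploit the bound $v^p \leq N^p$ to dominate $u$ by an explicit super-solution on the enlarged ball $B_\sigma(0) \supset \Omega$, and the auxiliary functions introduced just before the lemma have been set up precisely for this purpose. The general recipe is: build $w$ on $B_\sigma(0)$ with $(-\Delta)^{n/2} w = N^p$ (or $\geq N^p$) satisfying the same boundary/exterior vanishing conditions as $u$, show $u \leq w$ by an iterated maximum principle, and then verify that $\max w = w(0) \leq N^p/2$, which is exactly what the specific choice of $\sigma$ in \eqref{eq3.6} is tailored to deliver.

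For $n$ odd, I would take $w(x) := \tfrac{N^p}{c_0} h_\sigma(x)$, extended by zero outside $B_\sigma(0)$. By \eqref{a2} and \eqref{eq3.3}, $(-\Delta)^{n/2} w = N^p$ on $B_\sigma(0)$, and the iterated Navier construction \eqref{eq3.3-1} gives $(-\Delta)^i w = 0$ on $\mathbb{R}^n \setminus B_\sigma(0)$ for $i=0,\ldots,(n-1)/2$, matching the exterior conditions on $u$. Setting $u_i := (-\Delta)^i u$ and $w_i := (-\Delta)^i w$, I would compare $w_{(n-1)/2} - u_{(n-1)/2}$ first: its fractional Laplacian on $\Omega$ equals $N^p - v^p \geq 0$, and it is non-negative on $\mathbb{R}^n \setminus \Omega$ (where $u_{(n-1)/2} \equiv 0$ by the exterior condition and $w_{(n-1)/2} \geq 0$ by \eqref{a1}), so the maximum principle for $(-\Delta)^{1/2}$ yields $w_{(n-1)/2} \geq u_{(n-1)/2}$ on $\Omega$. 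Iterating this through the chain $-\Delta(w_i - u_i) = w_{i+1} - u_{i+1} \geq 0$ with $w_i - u_i \geq 0$ on $\partial\Omega$ descends to $w \geq u$ on $\Omega$. Since $h_\sigma$ is radial and each iterate in its construction is non-negative, superharmonic, and vanishes on $\partial B_\sigma$, the standard $(r^{n-1} h')' \leq 0$ argument shows each iterate is radially decreasing, so $w$ attains its max at the origin. Then \eqref{eq3.3-2} and the choice of $\sigma$ yield $w(0) = \tfrac{N^p}{c_0} K_1 \sigma^n \leq \tfrac{1}{2} N^p$, which gives $M \leq \tfrac{1}{2} N^p$.

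For $n$ even, the analogous $w$ is defined directly via the Green function $G_{n,\sigma}$ as $w(x) := N^p \int_{B_\sigma(0)} G_{n,\sigma}(x,y)\,dy$, so that $(-\Delta)^{n/2} w = N^p$ in $B_\sigma(0)$ with Navier data $w = -\Delta w = \cdots = (-\Delta)^{n/2-1} w = 0$ on $\partial B_\sigma(0)$. The comparison $u \leq w$ on $\Omega$ proceeds by the same cascaded maximum principle (now using only the classical Laplacian at each stage, with $\Omega \subset B_\sigma(0)$ ensuring the Navier data for $u$ sits inside the region where $w$ already dominates). A change of variable $y = \sigma\eta$ shows $w(0) = N^p \sigma^n K_2$, and the radial decrease of each iterated potential again places the maximum at the origin, so the choice $\sigma^n \leq 1/(2K_2)$ gives $M \leq \tfrac{1}{2} N^p$.

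Swapping the roles of $u,v$, $p,q$, $M,N$ and running the identical argument with $M^q$ in place of $N^p$ yields $N \leq \tfrac{1}{2} M^q$. The most delicate point, and where care is needed, is justifying the cascaded maximum principle for the odd case: one must check that each $w_i$ has enough regularity and the correct exterior/boundary sign so that the known maximum principles (for the fractional Laplacian and for $-\Delta$) apply sequentially; the construction of $h_\sigma$ via Green-function iteration is exactly what makes all these intermediate signs and boundary values transparent, so I expect no serious obstruction beyond bookkeeping.
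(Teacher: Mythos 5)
Your proposal is correct and takes essentially the same approach as the paper: compare $u$ against the explicit supersolution built from $h_\sigma$ (odd $n$) or $\int_{B_\sigma} G_{n,\sigma}(\cdot,y)\,dy$ (even $n$), apply the cascaded maximum principle through the Navier chain, and then use the choice of $\sigma$ in \eqref{eq3.6}. The only cosmetic differences are that the paper normalizes $u$ by $M$ first and disposes of the ``maximum at the origin'' point via a WLOG, whereas you skip the normalization and instead explicitly verify that the supersolution is radially decreasing so that its maximum over $B_\sigma(0)$ is at the origin -- arguably a cleaner way to close that step; the one tiny slip is citing \eqref{a1} for $w_{(n-1)/2}\geq 0$ when that inequality actually comes from \eqref{a2} and the nonnegativity of $\phi_\sigma$.
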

\begin{proof}
Denote
$$
\tilde u(x)=\frac{u(x)}{M} \quad \mbox{and} \quad \tilde v(x)=\frac{v(x)}{N}.
$$
Then $\|\tilde u\|_{L^\infty(\Omega)}=\|\tilde v\|_{L^\infty(\Omega)}=1,$
$$
(-\Delta)^{\frac{n}{2}} \tilde u(x)=\frac{1}{M}(-\Delta)^{\frac{n}{2}} u(x) =\frac{N^p}{M}\tilde v^p(x),
$$
and
$$
(-\Delta)^{\frac{n}{2}} \tilde v(x)=\frac{1}{N}(-\Delta)^{\frac{n}{2}} v(x) =\frac{M^q}{N}\tilde u^q(x).
$$

Without loss of generality, we may assume that $\tilde u(x)$ attains its maximum at $0\in \Omega$ such that $\tilde u(0)=1.$

\smallskip

If $n\geq3$ is odd, taking $R=\sigma$ in \eqref{eq3.2}, \eqref{eq3.2-1} and \eqref{eq3.3-1}, we have
\begin{equation*}
h_{\sigma}(x)=\int_{B_{\sigma}(0)} G_{n-1, \sigma}(x, y)\phi_\sigma(y)\mathrm{d}y,
\end{equation*}
and $h_{\sigma}(0)=K_1\sigma^{n}$ by \eqref{eq3.3-2}, where
$$
\phi_\sigma(x)=
\begin{cases}
(\sigma^2-|x|^2)^{\frac{1}{2}},&x\in B_{\sigma}(0),\\
0,& x\in \mathbb{R}^n\backslash B_{\sigma}(0).
\end{cases}
$$
Then by \eqref{a1} and \eqref{a2}, we know that
$$
(-\Delta)^{i} h_{\sigma}(x)\geq 0, \quad i=1,\cdots, \frac{n-3}{2}, \quad \forall \,\, x\in B_{\sigma}(0),
$$
and
$$
(-\Delta)^{\frac{n-1}{2}} h_{\sigma}(x)= \phi_{\sigma}(x), \quad \forall \,\, x\in B_{\sigma}(0).
$$
Now we compare $C_{1}h_{\sigma}(x)$ with $\tilde u(x)$, where $C_1=\frac{N^p}{c_0M}$ and $c_0$ is presented in \eqref{eq3.3}. It is obvious that
$$
C_1(-\Delta)^{\frac{n}{2}}h_{\sigma}(x)=C_1(-\Delta)^{\frac{1}{2}}\circ(-\Delta)^{\frac{n-1}{2}} h_{\sigma}(x)=C_1(-\Delta)^{\frac{1}{2}} \phi_\sigma(x)=\frac{N^p}{M}\geq(-\Delta)^{\frac{n}{2}}\tilde u(x).
$$
Noting that
$$
(-\Delta)^{i} \tilde u(x)= 0, \,\,\, i=0,\cdots, \frac{n-1}{2}, \quad \forall \, x\in \mathbb{R}^n\setminus\Omega,
$$
it follows from the maximum principle and the definition of $\sigma$ in \eqref{eq3.6} that
$$
1=\tilde u(0)\leq C_1h_{\sigma}(0)=\frac{N^p}{c_0M}K_1 \sigma^{n}\leq \frac{1}{2}\frac{N^p}{M}
$$
and hence
$$M\leq \frac{1}{2} N^p.$$
Exchanging $u$ and $v$ yields the other estimate
$$N\leq \frac{1}{2} M^q.$$

\smallskip

If $n\geq4$ is even, we consider the following auxiliary function defined on $B_{\sigma}(0)$:
$$
g_{\sigma}(x):=\frac{N^p}{M}\int_{B_{\sigma}(0)} G_{n, \sigma}(x,y)\mathrm{d}y,
$$
where $\sigma$ is defined in \eqref{eq3.6}. By rescaling, we can show that
$$
g_{\sigma}(0)=K_2\sigma^{n}\frac{N^p}{M}
$$
with $K_2$ defined in \eqref{eq3.5}.

We compare the function $g_{\sigma}(x)$ with $\tilde u(x)$. Since
$$(-\Delta)^i g_{\sigma}(x)\geq 0=(-\Delta)^i\tilde u(x), \quad \forall x\in  \partial\Omega, \,\,\, i=0,\cdots, \frac{n}{2}-1,$$
and
$$
(-\Delta)^{\frac{n}{2}}g(x)=\frac{N^p}{M}\geq (-\Delta)^{\frac{n}{2}}\tilde u(x), \quad  \forall \, x\in\Omega,
$$
by the maximum principle and the definition of $\sigma$ in \eqref{eq3.6}, we obtain that
$$
1=\tilde u(0)\leq g_{\sigma}(0)=K_2\sigma^{n}\frac{N^p}{M}\leq \frac{1}{2}\frac{N^p}{M},
$$
and hence
$$M\leq\frac{1}{2} N^p.$$
Exchanging $u$ and $v$ yields the other estimate
$$N\leq \frac{1}{2} M^q.$$
This concludes the proof of Lemma \ref{le3.2}.
\end{proof}

\section{Uniform a priori estimates}

In this section, we will carry out our proof of Theorem \ref{th1.3} for $n\geq2$ and $pq-1\geq\kappa$ with $\kappa>0$. Under the assumption \eqref{condition1} (i.e., $p\leq Kq$ with $K\geq1$), we establish the uniform a priori estimates for the component $u$ of $(u,v)$. We can also prove in a similar way that $v$ is uniformly bounded under the assumption $pq-1\geq\kappa$ and $q\leq Kp$ for some $\kappa>0$ and $K\geq1$.

\medskip

By Lemma \ref{le3.2}, we may assume that
\begin{equation}\label{a4}
  M, N>1 \quad \text{and} \quad M^{q}, \, N^{p}>\Gamma(n,\Omega,\kappa)
\end{equation}
with the sufficiently large uniform quantity $\Gamma(n,\Omega,\kappa)$ depending only on $n$, $\Omega$ and $\kappa$ (to be determined later). Furthermore, we may assume
\begin{equation}\label{a5}
  M>\Lambda(n,\Omega,\kappa)2^{\frac{n^{2}K}{2}}
\end{equation}
with the sufficiently large uniform quantity $\Lambda(n,\Omega,\kappa)$ depending only on $n$, $\Omega$ and $\kappa$ (to be determined later), or else we have done.

\medskip

By proving uniform estimates on integrals (with logarithmic singularity) and using Green's representation of the higher order derivative of $(u,v)$ and the Harnack inequality, we first establish more precise relationships between $M$ and $N$, which indicates that $\|u\|_{L^\infty}$ and $\|v\|_{L^\infty}$ are comparable and equivalent. Then, we prove more precise estimates for $\|-\Delta u\|_{L^\infty}$ and $\|-\Delta v\|_{L^\infty}$ in terms of $\|u\|_{L^\infty}$ and $\|v\|_{L^\infty}$ by Green's representation of the higher order derivative of $(u,v)$ and the Harnack inequality, in which we carefully divided the integral domain into three proper parts. Finally, we are able to use the Green's representation for $u$ and $v$ to derive the uniform bounds.

\medskip

To obtain the uniform a priori estimates for $u$ and $v$, we need to estimate $\int_\Omega v^p\mathrm{d}x$ and $\int_\Omega u^q\mathrm{d}x$.
\begin{lemma}\label{le3.1}
Let $n\geq3$ and $(u,v)$ be a pair of positive classical solution to \eqref{eq1-1} in a bounded and strictly convex domain $\Omega\subset\mathbb{R}^n$ with smooth boundary. Assume that $pq-1\geq\kappa$. Then there exist some positive constants $\delta$ ($\leq\min\left\{\sigma,\left(\frac{n}{2^{n}\omega_{n}}\right)^{\frac{1}{2n}}\right\}$) depending only on $n$ and $\Omega$, and $\gamma_1$ depending only on $n, \Omega$ and $\kappa$ such that

\noindent (i) The maxima of $(-\Delta)^{i}u$ and $(-\Delta)^{i}v$ ($i=0,\cdots,\lfloor\frac{n}{2}\rfloor$) in $\overline{\Omega}$ are attained in $\Omega_{\delta}:=\{x\in\Omega \mid dist(x, \partial \Omega)\geq \delta\}$, where $\lfloor x\rfloor$ denotes the largest integer $<x$;

\noindent (ii) The solution $(u, v)$ satisfies the uniform bound estimates:
$$
\int_\Omega v^p(x)\mathrm{d}x\leq \gamma_1,
$$
and
$$
\int_\Omega u^q(x)\mathrm{d}x\leq \gamma_1.
$$

\end{lemma}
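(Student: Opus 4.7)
For part~(i), the claim is essentially a compactness argument on top of Theorems~\ref{th1.1} and~\ref{th1.2}. Those theorems furnish, for each $x^{0}\in\partial\Omega$, a local thickness $\delta_{0}(x^{0})>0$ of a boundary strip in which every $(-\Delta)^{i}u$ and $(-\Delta)^{i}v$ is strictly monotone increasing along the inward normal $\nu^{0}$. By the smoothness, strict convexity, and compactness of $\partial\Omega$, the map $x^{0}\mapsto \delta_{0}(x^{0})$ is bounded below by a positive constant, so I would extract a single $\delta>0$ uniform over $\partial\Omega$, shrinking it further to meet the stated explicit upper bound. Strict monotonicity along every inward normal then rules out any local maximum of the listed functions in the outer strip $\Omega\setminus\Omega_{\delta}$, so each maximum must lie in $\Omega_{\delta}$.

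For part~(ii), my plan is a self-improving cut-off argument that avoids eigenfunctions. Pick $\eta\in C_{c}^{\infty}(\Omega)$ with $0\le\eta\le1$ and $\eta\equiv1$ on $\Omega_{\delta}$. Because $\eta$ and all its derivatives vanish off $\Omega$, $\eta$ automatically satisfies both the integer-order and the extended fractional-order Navier boundary conditions needed to integrate by parts, yielding
\begin{equation*}
\int_{\Omega}v^{p}\eta\,\mathrm{d}x=\int_{\Omega}u\,(-\Delta)^{n/2}\eta\,\mathrm{d}x,\qquad \int_{\Omega}u^{q}\eta\,\mathrm{d}x=\int_{\Omega}v\,(-\Delta)^{n/2}\eta\,\mathrm{d}x.
\end{equation*}
With $\Lambda:=\|(-\Delta)^{n/2}\eta\|_{L^{\infty}(\Omega)}$ (depending only on $n,\Omega,\delta$) and H\"older's inequality,
\begin{equation*}
\int_{\Omega_{\delta}}v^{p}\,\mathrm{d}x\le \Lambda|\Omega|^{1-1/q}\Big(\int_{\Omega}u^{q}\,\mathrm{d}x\Big)^{1/q},\qquad \int_{\Omega_{\delta}}u^{q}\,\mathrm{d}x\le \Lambda|\Omega|^{1-1/p}\Big(\int_{\Omega}v^{p}\,\mathrm{d}x\Big)^{1/p}.
\end{equation*}

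Part~(i) next lets me trade $\int_{\Omega}$ for $\int_{\Omega_{\delta}}$: choosing $\delta$ to be at most half the uniform monotonicity thickness and parametrising the boundary strip by normal coordinates $x=s+t\nu^{s}$, the monotonicity gives
\begin{equation*}
v^{p}(s+t\nu^{s})\le \tfrac{1}{\delta}\int_{\delta}^{2\delta}v^{p}(s+\tau\nu^{s})\,\mathrm{d}\tau\qquad \text{for all } t\in[0,\delta].
\end{equation*}
Integrating over $s\in\partial\Omega$ and absorbing the bounded Jacobian of the normal map produces $\int_{\Omega\setminus\Omega_{\delta}}v^{p}\le C\int_{\Omega_{\delta}}v^{p}$, and hence $\int_{\Omega}v^{p}\le C_{1}\int_{\Omega_{\delta}}v^{p}$; symmetrically for $u^{q}$.

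Writing $X:=\int_{\Omega_{\delta}}v^{p}$ and $Y:=\int_{\Omega_{\delta}}u^{q}$, substituting these bounds into one another gives an inequality of the form $X\le C_{2}\,X^{1/(pq)}$, where $C_{2}$ depends on $n,\Omega$ and on exponents of the shape $(p+1)/(pq-1)$. Splitting into the cases $pq\ge 2$ and $1+\kappa\le pq<2$ one checks the uniform bound $(p+1)/(pq-1)\le 3/\min\{1,\kappa\}$, so $C_{2}$ is controlled by $n,\Omega,\kappa$ alone. Consequently $X^{1-1/(pq)}\le C_{2}$, i.e.\ $X\le C_{2}^{pq/(pq-1)}\le C_{2}^{1+1/\kappa}$, and $\int_{\Omega}v^{p}\le C_{1}X\le \gamma_{1}(n,\Omega,\kappa)$; the bound on $\int_{\Omega}u^{q}$ follows identically. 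The most delicate step I anticipate is the monotonicity-based conversion $\int_{\Omega}\lesssim\int_{\Omega_{\delta}}$: $\delta$ must be chosen simultaneously compatible with the uniform thickness of Theorems~\ref{th1.1}–\ref{th1.2} and with the tubular-neighbourhood diffeomorphism of $\partial\Omega$, which is precisely where the smoothness and strict convexity of $\Omega$ enter; the algebraic closing afterwards hinges entirely on $pq-1\ge\kappa$ keeping all exponents bounded.
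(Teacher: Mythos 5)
Your proposal is correct and follows essentially the same route as the paper: extract a uniform boundary-strip thickness $\delta$ from Theorems~\ref{th1.1}/\ref{th1.2} by compactness of $\partial\Omega$; use monotonicity along the inward normal to pass from $\int_{\Omega}$ to $\int_{\Omega_{\delta}}$; use a smooth cut-off $\eta\equiv1$ on $\Omega_{\delta}$ together with the higher-order integration-by-parts identity and H\"older to get the pair of reverse-H\"older-type inequalities; then close the loop using $pq-1\geq\kappa$. The only cosmetic deviations are that you track the two integrals $X,Y$ symbolically before substituting (the paper directly iterates the two displayed inequalities) and you flag an exponent $(p+1)/(pq-1)$ that in fact does not appear — with $p,q\geq1$ the exponents on your constant $C_{2}$ are already bounded by $2$, and the only place $pq-1\geq\kappa$ is needed is in controlling $pq/(pq-1)\leq 1+1/\kappa$, exactly as you also note.
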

\begin{proof}
By Theorem \ref{th1.1}, Theorem \ref{th1.2} and Heine-Borel theorem, it is easy to know that $(-\Delta)^{i}u$ and $(-\Delta)^{i}v$ ($i=0,\cdots,\lfloor\frac{n}{2}\rfloor$) are strictly monotone increasing along the inward normal direction in a boundary layer with uniform thickness $\delta$ depending only on $n$ and $\Omega$ (see \cite{DD}). Therefore, the maxima of $(-\Delta)^{i}u$ and $(-\Delta)^{i}v$ ($i=0,\cdots,\lfloor\frac{n}{2}\rfloor$) must be attained in
$$\Omega_\delta:=\left\{x\in\Omega \mid dist(x, \partial \Omega)\geq \delta\right\}.$$
Hence we have reached (i).

\medskip
Next we prove (ii).
Let $\eta(x) \in C_0^\infty (\Omega)$, $\eta(x) \in [0, 1], \; x\in \Omega$ and
\begin{equation*}
\eta(x)=
\begin{cases}
1,\; &x\in \Omega_\delta,\\
0,\; &x\in \Omega \backslash \Omega_{\frac{\delta}{2}}.
\end{cases}
\end{equation*}

By Theorem \ref{th1.1}, Theorem \ref{th1.2} and a similar argument as in \cite{DD}, we can derive
\begin{equation}\label{eq3-1}
\int_\Omega u^q(x) \mathrm{d}x \leq C_1 \int_{\Omega_\delta}  u^q(x) \mathrm{d}x,
\end{equation}
and
\begin{equation}\label{eq3-2}
\int_\Omega v^p(x) \mathrm{d}x \leq C_1 \int_{\Omega_\delta}  v^p(x) \mathrm{d}x,
\end{equation}
where $C_{1}$ depends only on $n$ and $\Omega.$

Using the integrating by parts (if $n$ is odd, $u\in C^{n-2}_{0}(\Omega)$, c.f. \cite[Theorem 2]{CW}), we obtain
 $$
\int_{\Omega} (-\Delta)^{\frac{n}{2}} u(x) \eta(x) \mathrm{d}x=\int_{\Omega} u(x) (-\Delta)^{\frac{n}{2}}\eta(x) \mathrm{d}x.
$$
Then from the first equation in \eqref{eq1-1}, one has
\begin{equation*}
\int_{\Omega}v^p(x) \eta(x) \mathrm{d}x=\int_{\Omega} u(x) (-\Delta)^{\frac{n}{2}}\eta(x) \mathrm{d}x,
\end{equation*}
which together with \eqref{eq3-2} yields
\begin{equation}\label{eq3-3}
\begin{split}
\int_{\Omega}v^p(x) \mathrm{d}x \leq& C_1 \int_{\Omega_\delta}v^p(x) \mathrm{d}x \\
\leq& C_1 \int_{\Omega}v^p(x) \eta(x) \mathrm{d}x\\
=& C_1\int_{\Omega} u(x) (-\Delta)^{\frac{n}{2}}\eta(x) \mathrm{d}x\\
\leq& C_2 \int_{\Omega} u(x)\mathrm{d}x\\
\leq& C_2 |\Omega|^{1-\frac{1}{q}}\left (\int_{\Omega} u^q(x)\mathrm{d}x\right)^\frac{1}{q}.
\end{split}
\end{equation}
Similarly, using \eqref{eq3-1} and the second equation in \eqref{eq1-1}, we have
\begin{equation}\label{eq3-4}
\int_{\Omega}u^q(x) \mathrm{d}x \leq
C_{2}|\Omega|^{1-\frac{1}{p}}\left (\int_{\Omega} v^p(x)\mathrm{d}x\right)^\frac{1}{p}.
\end{equation}
Combining \eqref{eq3-3} with \eqref{eq3-4} and using the assumption $pq-1\geq\kappa$, we end up with
$$
\int_\Omega u^q(x)\mathrm{d}x\leq C_{2}^{\frac{2}{1-\frac{1}{pq}}}|\Omega|\leq C_2^{\frac{2}{1-\frac{1}{\kappa+1}}}|\Omega|=:C_3(n,\Omega, \kappa)
$$
and
$$
\int_\Omega v^p(x)\mathrm{d}x\leq C_{2}^{\frac{2}{1-\frac{1}{pq}}}|\Omega|\leq C_2^{\frac{2}{1-\frac{1}{\kappa+1}}}|\Omega|=:C_3(n,\Omega, \kappa).
$$
Therefore, (ii) is valid and we complete the proof of Lemma \ref{le3.1}.
\end{proof}

For any fixed $x\in \Omega_\delta,$ let  $G(x, y)$ be  Green's function for $(-\Delta)^{\frac{n}{2}}$ with pole at $x$ associated with Navier boundary conditions (if $n$ is even) and
 Navier exterior conditions (if $n$ is odd). Then we have
$$
G(x, y)=C_n \ln\frac{1}{|x-y|}-h(x,y), \quad \forall \, y \in \overline{\Omega},
$$
where $h(x, y)$ denotes the $\frac{n}{2}$-harmonic function.

\medskip

If $n\geq 4$ is even, $h(x, y)$ satisfies
\begin{equation}\label{eq4-1}
\begin{cases}
(-\Delta)^{\frac{n}{2}} h(x,y)=0, &y\in \Omega, \\
(-\Delta)^i h(x,y) =(-\Delta)^i \left(C_n \ln\left(\frac{1}{|x-y|}\right)\right),\,\,  i=0, 1, \cdots, \frac{n}{2}-1, &y\in \partial \Omega.
\end{cases}
\end{equation}

\medskip

If $n\geq 3$ is odd, $h(x, y)$ satisfies
\begin{equation}\label{eq4-2}
\begin{cases}
(-\Delta)^{\frac{n}{2}} h(x,y)=0, &y\in \Omega, \\
(-\Delta)^i h(x,y) =(-\Delta)^i \left(C_n \ln\left(\frac{1}{|x-y|}\right)\right),\,\,  i=0, 1, \cdots, \frac{n-1}{2}, &y\in \mathbb{R}^n\backslash \Omega.
\end{cases}
\end{equation}

\medskip

We will show that the $\frac{n}{2}$-harmonic function $h(x, y)$ and $(-\Delta)^{i}h(x,y)$ ($i=1, \cdots, \left\lfloor\frac{n}{2}\right\rfloor$) are bounded for any $x\in \Omega_\delta$ and $y\in \Omega$.

\begin{lemma}\label{le4.1}
The $\frac{n}{2}$-harmonic function harmonic function $h(x, y)$ is bounded from above for any $x\in \Omega_\delta$ and $y\in \Omega,$ more precisely, there exists a constant $\gamma_2>0$ depending only on $n$ and $\Omega$ such that
$$
0\leq h(x, y)\leq \gamma_2, \quad \forall \, x\in \Omega_\delta, \,\,\, \forall \, y\in \Omega.
$$
Moreover,
$$
0 \leq (-\Delta)^i h(x, y) \leq \gamma_2, \quad i=1, \cdots, \left\lfloor\frac{n}{2}\right\rfloor, \quad \forall \, x\in \Omega_\delta, \,\,\, \forall \, y\in \Omega
$$
where $\left\lfloor x\right\rfloor$ denotes the largest integer $<x$.
\end{lemma}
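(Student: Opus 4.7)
The plan is to exploit the cascade structure of the Navier-type problem satisfied by $h(x,\cdot)$, reducing the polyharmonic equation to a descending sequence of Poisson equations (together with a single half-Laplacian equation at the top when $n$ is odd). For fixed $x\in\Omega_{\delta}$, view $h(x,\cdot)$ as a function of $y$ and set
\[
w_i(y):=(-\Delta_y)^i h(x,y),\qquad i=0,1,\ldots,\left\lfloor \tfrac{n}{2}\right\rfloor,
\]
so that $-\Delta w_i=w_{i+1}$ in $\Omega$ for every $i<\lfloor n/2\rfloor$, while $w_{\lfloor n/2\rfloor}$ is harmonic (even $n$) or $\tfrac12$-harmonic (odd $n$) in $\Omega$.

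The first step is to compute the prescribed data $g_i(y):=(-\Delta_y)^i\bigl(C_n\ln(1/|x-y|)\bigr)$ on $\partial\Omega$ (even $n$) or on $\mathbb{R}^n\setminus\Omega$ (odd $n$). A short induction based on $-\Delta|x-y|^{\alpha}=-\alpha(\alpha+n-2)|x-y|^{\alpha-2}$ yields, for every $i\geq 1$ and $y\neq x$,
\[
(-\Delta_y)^i\ln(1/|x-y|)=\frac{c_{n,i}}{|x-y|^{2i}},\qquad c_{n,i}=2^{i-1}(i-1)!\,(n-2)(n-4)\cdots(n-2i).
\]
Because $i\leq\lfloor n/2\rfloor$ forces $n-2i\geq 1$, every $c_{n,i}$ is strictly positive, so $g_i\geq 0$ wherever it is prescribed. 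Moreover, $x\in\Omega_{\delta}$ together with $y\in\partial\Omega$ (resp.\ $y\in\mathbb{R}^n\setminus\Omega$) gives $|x-y|\geq\delta$, hence $\|g_i\|_\infty\leq C_nc_{n,i}/\delta^{2i}$ for $i\geq 1$, and in the odd case $g_{(n-1)/2}(y)$ additionally decays to $0$ as $|y|\to\infty$. For $i=0$, the running normalisation $\Omega\subset B_\sigma(0)$ with $\sigma\leq 1/4$ from \eqref{eq3.6} forces $|x-y|\leq 1/2<1$, so $g_0\in[C_n\ln 2,\,C_n\ln(1/\delta)]$ is non-negative and bounded.

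The second step treats the top index. For even $n$, $w_{n/2-1}$ is harmonic in $\Omega$ with boundary datum $g_{n/2-1}\geq 0$, so the classical maximum principle gives $0\leq w_{n/2-1}\leq\|g_{n/2-1}\|_\infty$. For odd $n$, $w_{(n-1)/2}$ solves $(-\Delta)^{1/2}w_{(n-1)/2}=(-\Delta)^{n/2}h=0$ in $\Omega$ with non-negative bounded exterior datum $g_{(n-1)/2}\in\mathcal{L}_1$, and the maximum principle for the fractional Laplacian (cf.\ \cite{CLM}) again yields $0\leq w_{(n-1)/2}\leq\|g_{(n-1)/2}\|_\infty$. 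Now I descend: for each $0\leq i<\lfloor n/2\rfloor$, $w_i$ solves
\[
-\Delta w_i=w_{i+1}\geq 0 \ \text{in}\ \Omega, \qquad w_i=g_i\geq 0 \ \text{on}\ \partial\Omega,
\]
so comparison with the barrier $\|g_i\|_\infty+\|w_{i+1}\|_\infty(\mathrm{diam}\,\Omega)^2/(2n)$ gives $w_i\geq 0$ together with
\[
\|w_i\|_{L^\infty(\Omega)}\leq\|g_i\|_{L^\infty(\partial\Omega)}+C(n,\Omega)\,\|w_{i+1}\|_{L^\infty(\Omega)}.
\]
Iterating from $i=\lfloor n/2\rfloor-1$ down to $i=0$ produces a uniform bound $\gamma_2=\gamma_2(n,\Omega)$.

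The main obstacle is the odd-$n$ case at the top step, where one must invoke a maximum principle for $(-\Delta)^{1/2}$ on the bounded domain $\Omega$ with data defined on the unbounded complement $\mathbb{R}^n\setminus\Omega$; this is legitimate precisely because $g_{(n-1)/2}(y)=C/|x-y|^{n-1}$ is non-negative, bounded on $\mathbb{R}^n\setminus\Omega$, and decays to $0$ at infinity, so it lies in $\mathcal{L}_1$ and admits the expected comparison principle. Everything else reduces to a mechanical iteration of the second-order maximum principle, with the $\delta$-dependence absorbed into $\gamma_2$ because $\delta=\delta(n,\Omega)$ was already fixed in Lemma~\ref{le3.1}.
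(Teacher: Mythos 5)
Your proof is correct, and it is considerably more self-contained than the paper's, which proves Lemma~\ref{le4.1} merely by citing Lemma~4.1 of \cite{CW} (for $n$ odd) and equations (2.20)--(2.21) of \cite{DD} (for $n$ even). You have essentially reconstructed the argument those references rely on: compute $(-\Delta_y)^i\ln(1/|x-y|)=c_{n,i}|x-y|^{-2i}$ with $c_{n,i}=2^{i-1}(i-1)!\,(n-2)(n-4)\cdots(n-2i)>0$ for all $i\le\lfloor n/2\rfloor$ (the constraint $n-2i\ge 1$, resp.\ $\ge 2$, is exactly what the paper's convention $\lfloor x\rfloor=$ largest integer $<x$ guarantees), observe that the normalisation $\Omega\subset B_\sigma(0)$ with $\sigma\le 1/4$ makes the $i=0$ datum nonnegative, bound the prescribed data uniformly because $x\in\Omega_\delta$ and $y\in\Omega^c$ give $|x-y|\ge\delta$, and then descend through the Navier cascade $-\Delta w_i=w_{i+1}$ using the second-order maximum principle. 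Your handling of the top level for odd $n$ is the only non-classical ingredient: you correctly justify the comparison principle for $(-\Delta)^{1/2}$ by noting the exterior datum $C|x-y|^{-(n-1)}$ is nonnegative, bounded, and in $\mathcal{L}_1$; this is precisely the hypothesis needed to invoke the fractional maximum principle from \cite{CLM}. What your write-up buys, relative to the paper's bare citation, is a transparent and verifiable argument; what it does not buy is anything the references would not already give a reader willing to chase them down.
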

\begin{proof}
Lemma \ref{le4.1} can be deduced from   \eqref{eq4-1} and  \eqref{eq4-2} satisfied by $h(x, y).$  Indeed,
if $n$ is odd, the conclusion follows from Lemma 4.1 in \cite{CW}. If $n$ is even, the result is a direct consequence of (2.20) and (2.21) in \cite{DD}.
\end{proof}

Based on the above lemmas, we are able to obtain the following refined relationships between $M$ and $N$, which is a key ingredient in our proof.
\begin{lemma}\label{le4.4}
Let $n\geq3$ and $(u,v)$ be a pair of positive classical solution to \eqref{eq1-1} with $pq-1\geq\kappa$. Then there exists a constant $\gamma_3>0$ depending only on $n$, $\Omega$ and $\kappa$ such that
$$
N\leq \gamma_3 M^{\frac{q}{\frac{2}{n}p+1}} \quad \mbox{and} \quad M\leq \gamma_3 N^{\frac{p}{\frac{2}{n}q+1}}.
$$
\end{lemma}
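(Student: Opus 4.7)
The plan is to estimate $N$ by rescaling around the interior maximum point of $v$, apply an interior gradient bound to show that $v$ remains at least $N/2$ on a ball of controlled radius, and then invoke the integral bound in Lemma \ref{le3.1}(ii). The companion inequality $M\leq \gamma_{3}N^{p/(2q/n+1)}$ follows from the same argument with $u$ and $v$ exchanged, rescaling by $\mu:=(N^{p}/M)^{1/n}\geq 2^{1/n}$ around the interior maximum $x_{0}$ of $u$.

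By Lemma \ref{le3.1}(i) fix $x_{1}\in\Omega_{\delta}$ with $v(x_{1})=N$, and set $\nu:=(M^{q}/N)^{1/n}$, which satisfies $\nu\geq 2^{1/n}$ by Lemma \ref{le3.2}. Define
\[
\hat{u}(z):=u(x_{1}+z/\nu)/M, \qquad \hat{v}(z):=v(x_{1}+z/\nu)/N,
\]
extended by $0$ outside the rescaled domain $\nu(\Omega-x_{1})\supset B_{\nu\delta}(0)$. Then $0\leq\hat{u},\hat{v}\leq 1$, $\hat{v}(0)=1$, and the chosen rescaling converts the second equation into $(-\Delta)^{n/2}\hat{v}=\hat{u}^{q}\leq 1$ on $\nu(\Omega-x_{1})$. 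Set $R:=\min(\nu\delta/2,1)$. Interior regularity for $(-\Delta)^{n/2}$---classical Schauder theory when $n$ is even, and interior estimates for the higher order fractional Laplacian when $n$ is odd (using the $L^{\infty}(\mathbb{R}^{n})$-bound on $\hat{v}$ afforded by the Navier exterior condition)---produces a uniform constant $C_{*}=C_{*}(n)$ with $\|\nabla\hat{v}\|_{L^{\infty}(B_{R/2}(0))}\leq C_{*}/R$. A mean value argument then gives $\hat{v}\geq 1/2$ on $B_{R/(4C_{*})}(0)$, equivalently $v\geq N/2$ on $B_{R/(4C_{*}\nu)}(x_{1})\subset\Omega$.

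Applying Lemma \ref{le3.1}(ii) to this ball yields two cases. In the good regime $\nu\delta\geq 2$, we have $R=1$ and the ball has radius $1/(4C_{*}\nu)$; substituting $\nu^{-n}=N/M^{q}$ gives
\[
\gamma_{1} \geq \int_{\Omega}v^{p}\,\mathrm{d}x \geq c_{n}(N/2)^{p}(4C_{*}\nu)^{-n} = c_{n}'\,N^{p+1}/(2^{p}M^{q}),
\]
so that $N^{p+1}\leq C(n,\Omega,\kappa)\,M^{q}$, and taking $(p+1)$-th roots (with $2^{p/(p+1)}\leq 2$ and $\gamma_{1}^{1/(p+1)}\leq\max(\gamma_{1},1)$) yields $N\leq C'M^{q/(p+1)}$ uniformly in $p,q$. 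Since $q/(p+1)\leq qn/(2p+n)$ for $n\geq 2$ and $M\geq 1$, this gives $N\leq C'M^{q/(2p/n+1)}$. In the bad regime $\nu\delta<2$, instead $R=\nu\delta/2$ and the ball has radius $\delta/(8C_{*})$, \emph{independent of $\nu$}; the same integral inequality then forces $N^{p}\leq C''(n,\Omega,\kappa)$, hence $N\leq\max(C'',1)$, which is absorbed into $\gamma_{3}M^{q/(2p/n+1)}$ whenever $M\geq 1$. The residual case $M\leq 1$ follows at once from Lemma \ref{le3.2} because $M^{q}\leq M^{q/(2p/n+1)}$ for $M\leq 1$. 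The main obstacle is securing the scale-invariant gradient estimate $\|\nabla\hat{v}\|_{L^{\infty}(B_{R/2})}\leq C_{*}/R$ with $C_{*}$ independent of $p,q,M,N$ and of $R\in(0,1]$; this is routine via Schauder in the even case but in the odd case requires composing interior regularity for $(-\Delta)^{1/2}$ with that for $(-\Delta)^{(n-1)/2}$ while keeping track of the global $L^{\infty}$ extension.
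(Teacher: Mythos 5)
Your strategy is to rescale around the interior maximum of $v$ by $\nu=(M^{q}/N)^{1/n}$ and obtain the pointwise lower bound $v\geq N/2$ on a ball of radius $\sim 1/\nu$ via a scale-invariant interior gradient estimate for $(-\Delta)^{n/2}$. This differs substantially from the paper's proof: there, $-\Delta u$ is estimated at its maximum point directly from the Green representation of $u$ for $(-\Delta)^{n/2}$ (splitting the integral at the intermediate radius $r_{0}=\delta N^{-p/n}$ to get $\|-\Delta u\|_{L^{\infty}}\lesssim N^{2p/n}$), and then the Harnack inequality is applied to $M-u$ via the \emph{second-order} equation $-\Delta(M-u)=\Delta u$, giving the ball radius $R_{1}\sim\sqrt{M/(qN^{p})}$ and the exponent $q/(\tfrac{2}{n}p+1)$. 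The crucial feature of the paper's route is that the only PDE tool invoked beyond the Green function is the classical second-order Harnack inequality, so the proof is entirely insensitive to whether $n$ is even or odd.

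The gap in your argument is exactly the step you flag at the end. For $n$ even, $(-\Delta)^{n/2}$ is local and the rescaled Schauder/$W^{n,p}$ estimate
$\|\nabla\hat{v}\|_{L^{\infty}(B_{R/2})}\leq C(n)R^{-1}(\|\hat{v}\|_{L^{\infty}(B_{R})}+R^{n}\|f\|_{L^{\infty}(B_{R})})$ is indeed standard and scale-invariant. For $n$ odd, however, $(-\Delta)^{n/2}=(-\Delta)^{1/2}\circ(-\Delta)^{(n-1)/2}$ is defined as a composition, and the interior regularity theory you invoke is \emph{not} available from the mere $L^{\infty}(\mathbb{R}^{n})$ bound $\|\hat{v}\|_{\infty}\leq1$: to apply interior estimates for $(-\Delta)^{1/2}$ to $w:=(-\Delta)^{(n-1)/2}\hat{v}$ you would first need a uniform a priori bound on $w$ inside the domain (the Navier exterior condition only gives $w\equiv 0$ outside), and bounding $w$ requires control of $n-1$ derivatives of $\hat{v}$ — precisely what you are trying to prove. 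Bootstrapping from the bottom up hits the same circularity: $(-\Delta)\hat{v}$ is not a priori controlled by $\|\hat{v}\|_{\infty}$ and $\|f\|_{\infty}$ for a nonlocal composition of this type. The paper sidesteps this entirely by extracting the bound on $-\Delta u$ from the explicit Green kernel of $(-\Delta)^{n/2}$ (Lemma \ref{le4.1}) rather than from interior elliptic regularity, and then feeding that bound into the classical Harnack inequality for the ordinary Laplacian. If you wish to rescue your route in the odd case, you would either have to (a) replace the interior gradient estimate by a Green-function estimate for $\nabla\hat{v}$ on the rescaled domain, which reintroduces the logarithmic tail the paper handles with the $r_{0}$ cutoff, or (b) prove a scale-invariant interior $C^{1}$ estimate for $(-\Delta)^{n/2}$ ($n$ odd) using only $\|\hat{v}\|_{L^{\infty}(\mathbb{R}^{n})}$ and $\|(-\Delta)^{n/2}\hat{v}\|_{L^{\infty}(B_{R})}$, which is a non-trivial standalone lemma.

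One further remark: if the gradient estimate held, your argument would actually yield $N\leq CM^{q/(p+1)}$, which is \emph{stronger} than what Lemma \ref{le4.4} asserts and coincides with the refined bound the paper only obtains later (Remark \ref{rem1}) as a consequence of Lemma \ref{le4.3}, itself proved \emph{using} Lemma \ref{le4.4}. That the paper proceeds in two steps — a crude $N^{2p/n}$ bound on $\Delta u$, then a sharper three-piece decomposition once $M,N$ are known to be comparable — is an indication that the stronger conclusion requires more input than the integral bound alone. Your case analysis (good/bad regime, and the reduction to $M\geq 1$ via Lemma \ref{le3.2}) is otherwise correct.
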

\begin{proof}
From the first equation of \eqref{eq1-1}, by Green's representation formula, we have
\begin{equation}\label{eq4-3}
u(x)=C_1 \int_\Omega \ln\frac{1}{|x-y|} v^p(y)\mathrm{d}y -\int_\Omega h(x, y) v^p(y)\mathrm{d}y.
\end{equation}
It can be deduced from \eqref{eq1-1} and the maximum principle that
$$
-\Delta u(x)\geq 0, \quad \forall \, x\in \Omega.
$$
Applying Theorem \ref{th1.1} and Theorem \ref{th1.2}, we know that the maximum of $-\Delta u(x)$ in $\Omega$ can only be attained at some point $x_1 \in \Omega_\delta$, more precisely,
$$
-\Delta u(x_1)=\max_{\overline{\Omega}}\left(-\Delta u(x)\right).
$$

It can be seen from Lemma \ref{le4.1} that
$$
-\Delta h(x_1,y)\geq 0, \quad \forall \, y\in \Omega,
$$
then by \eqref{eq4-3}, we have
\begin{equation}\label{eq4-4}
\begin{split}
-\Delta u(x_1)&=C_2\int_\Omega\frac{v^p (y)}{|x_1-y|^2} \mathrm{d}y -\int_\Omega \left[-\Delta h(x_1, y)\right] v^p(y)\mathrm{d}y\\
&\leq C_2\int_\Omega \frac{v^p (y)}{|x_1-y|^2} \mathrm{d}y \\
&=C_{2}\int_{B_{r_0}(x_1)} \frac{v^p (y)}{|x_1-y|^2} \mathrm{d}y+C_{2}\int_{\Omega \cap B_{r_0}^c(x_1)} \frac{v^p (y)}{|x_1-y|^2} \mathrm{d}y \\
&=: J_1+J_2,
\end{split}
\end{equation}
where $r_{0}:=\delta N^{-\frac{p}{n}}$. By Lemma \ref{le3.1}, we have the following estimates on $J_{1}$ and $J_{2}$:
\begin{equation}\label{eq4-6'}
J_1=C_{2}\int_{B_{r_0}(x_1)} \frac{v^p (y)}{|x_1-y|^2} \mathrm{d}y\leq C_{3}r_{0}^{n-2}N^p=C_{4}N^{\frac{2p}{n}},
\end{equation}
\begin{equation}\label{eq4-7'}
J_2=C_{2}\int_{\Omega \cap B_{r_{0}}^c(x_1)} \frac{v^p (y)}{|x_1-y|^2} \mathrm{d}y\leq C_{2}r_{0}^{-2}\int_{\Omega}v^p(y)\mathrm{d}y\leq C_{5} N^{\frac{2p}{n}}.
\end{equation}
Combining \eqref{eq4-4}, \eqref{eq4-6'} and \eqref{eq4-7'} yields that
$$
0\leq -\Delta u(x_1)=\max_{\overline{\Omega}}\left(-\Delta u(x)\right)\leq C_{6}N^{\frac{2p}{n}}.
$$
Similarly, there exists some point $x_2 \in \Omega_\delta$ such that $-\Delta v(x_2)=\max\limits_{\overline{\Omega}}\left(-\Delta v(x)\right)$, by exchanging $u$ and $v$, one can get
$$
0\leq -\Delta v(x_2)=\max_{\overline{\Omega}}\left(-\Delta v(x)\right)\leq C_{7}M^{\frac{2q}{n}}.
$$

To estimate $M-u$, assume that $M=\max\limits_{\overline{\Omega}} u(x)$ is attained at $x_u\in\Omega_{\delta}$. Since $-\Delta(M-u)=\Delta u$ and $B_\delta(x_u)\subset \Omega$, employing the Harnack inequality (refer to e.g. \cite{GT}, see also \cite{S}), we have
$$
\sup_{B_r(x_u)}(M-u(x))\leq C_8 \left(\inf_{B_{2r}(x_u)}(M-u(x))+ r \|\Delta u\|_{L^n(B_{2r}(x_u))}\right)
$$
for all $r\in(0, \frac{\delta}{4}]$. Taking
$$r=R_1=\frac{\delta}{4}\sqrt{\frac{M}{C_{6}C_{8}qN^{p}}},$$
it follows from Lemma \ref{le3.2} that $r\in(0, \frac{\delta}{4}]$ (we may assume that $C_6, \, C_8>1$) and thus we deduce that
 $$
0\leq M-u(x)\leq 2C_{8}\left(\frac{\omega_n}{n}\right)^{\frac{1}{n}}\left(-\Delta u(x_1)\right)r^2\leq\frac{M}{16q}, \quad \forall \, |x-x_u|\leq R_1,
$$
here we have used the fact that $\delta \leq\min\left\{\sigma,\left(\frac{n}{2^{n}\omega_{n}}\right)^{\frac{1}{2n}}\right\}$ in Lemma \ref{le3.1}.
Thus we infer that
\begin{equation}\label{eqspecial}
u^q(y)\geq M^q \left(1-\frac{1}{16 q}\right)^q\geq e^{-\frac{1}{15}} M^q, \quad \forall \, y\in B_{R_1}(x_u)\subset \Omega.
\end{equation}
Applying Lemma \ref{le3.1} and \eqref{eqspecial}, we get
$$
C_9 \geq \int_\Omega u^q(x)\mathrm{d}x \geq \int_{B_{R_1}(x_u)}u^q(x)\mathrm{d}x \geq e^{-\frac{1}{15}} M^q |B_{R_1}(x_u)|\geq C_{10}\frac{M^{q+\frac{n}{2}}}{q^\frac{n}{2} N^\frac{np}{2}},
$$
which gives the relationship between $M$ and $N$ as
$$
M\leq C_{11}q^{\frac{n}{2q+n}}N^{\frac{np}{2q+n}}\leq C_{12}N^{\frac{p}{\frac{2}{n}q+1}}.
$$
In a similar way, we can deduce that there exists a uniform positive constant $C_{13}$ such that
$$
N\leq C_{13}M^{\frac{q}{\frac{2}{n}p+1}}.
$$
This concludes our proof of Lemma \ref{le4.4}.
\end{proof}

From now on, we take the quantities $\Gamma(n,\Omega,\kappa):=\gamma_{3}^{n+2}\geq1$ and $\Lambda(n,\Omega,\kappa):=\gamma_{3}\geq1$ depending only on $n$, $\Omega$ and $\kappa$. Thus assumptions \eqref{a4} and \eqref{a5} become $M^{q},\,N^{p}>\Gamma(n,\Omega,\kappa):=\gamma_{3}^{n+2}$ and $M>\Lambda(n,\Omega,\kappa)2^{\frac{n^{2}K}{2}}:=\gamma_{3}2^{\frac{n^{2}K}{2}}$, where $\gamma_{3}$ is presented in Lemma \ref{le4.4}.

\medskip

From Lemma \ref{le4.4} and the assumption $p\leq Kq$, we deduce that
$$\gamma_{3}2^{\frac{n^{2}K}{2}}<M\leq \gamma_3 N^{\frac{p}{\frac{2}{n}q+1}}\leq \gamma_3 N^{\frac{Kq}{\frac{2}{n}q+1}}\leq\gamma_3 N^{\frac{nK}{2}},$$
and hence $N>2^{n}$. Similarly, we can also get $M>2^{n}$ provided that $q\leq Kp$ and $N>\gamma_{3}2^{\frac{n^{2}K}{2}}$. That is, if one of the maxima of $(u,v)$ is large enough, then the other is also large.

\medskip

We will show that $u$ is uniformly bounded provided that $p\leq Kq$. To this end, we need the following uniform estimates on integrals with logarithmic singularity.
\begin{lemma}\label{le4.2}
Let $(u,v)$ be a pair of positive classical solution to \eqref{eq1-1} with $pq-1\geq\kappa$. Then there exists a positive constant $\gamma_4$ depending only on $n$,  $\Omega$  and $\kappa$ such that
$$
\frac{1}{M}\int_\Omega \ln\frac{1}{|x-y|} v^p(y)\mathrm{d}y \leq \gamma_4.
$$
and
$$
\frac{1}{N}\int_\Omega \ln\frac{1}{|x-y|} u^q(y)\mathrm{d}y \leq \gamma_4.
$$
\end{lemma}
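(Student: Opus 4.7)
The plan is to read the desired bound directly off the Green's representation formula for $u$, which already couples $u$ with the logarithmic integral of $v^p$ through the fundamental solution of $(-\Delta)^{n/2}$. After rearranging, the only ingredients needed are the $L^{1}$-bound on $v^{p}$ from Lemma \ref{le3.1} and the pointwise bound on the $\frac{n}{2}$-harmonic remainder $h(x,y)$ from Lemma \ref{le4.1}. The symmetric estimate involving $u^{q}$ follows in exactly the same way from the Green representation of $v$.

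First I would fix $x\in\Omega_\delta$ and write the Green representation
$$
u(x) = C_n \int_\Omega \ln\frac{1}{|x-y|}\, v^p(y)\, \mathrm{d}y \; - \int_\Omega h(x,y)\, v^p(y)\, \mathrm{d}y,
$$
valid because $u$ solves $(-\Delta)^{n/2}u=v^p$ with the Navier (resp.\ Navier-exterior) boundary conditions and because, in the odd-dimensional case, $u\in C^{n-2}_0(\Omega)$ is exactly the regularity assumed in Theorem \ref{th1.3} that justifies the integration by parts leading to this representation (see \cite{CW}). Solving for the logarithmic integral gives
$$
C_n \int_\Omega \ln\frac{1}{|x-y|}\, v^p(y)\, \mathrm{d}y = u(x) + \int_\Omega h(x,y)\, v^p(y)\, \mathrm{d}y.
$$

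Next, since $\Omega\subset B_\sigma(0)$ with $\sigma\le 1/4$ after the initial rescaling, one has $|x-y|<1/2$ for all $x,y\in\Omega$, so $\ln\frac{1}{|x-y|}>0$ and the left-hand side is positive. I would then bound $u(x)\le M$ trivially, apply Lemma \ref{le4.1} to get $0\le h(x,y)\le\gamma_2$ uniformly for $x\in\Omega_\delta$ and $y\in\Omega$, and invoke Lemma \ref{le3.1} to obtain $\int_\Omega v^p\,\mathrm{d}y\le\gamma_1$. This yields
$$
C_n \int_\Omega \ln\frac{1}{|x-y|}\, v^p(y)\, \mathrm{d}y \le M + \gamma_1\gamma_2,
$$
so dividing by $C_n M$ and using $M>1$ from \eqref{a4} gives $\frac{1}{M}\int_\Omega \ln\frac{1}{|x-y|}\, v^p\, \mathrm{d}y \le (1+\gamma_1\gamma_2)/C_n =: \gamma_4$. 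Interchanging the roles of $u$ and $v$ (and of $p$ and $q$) in the same argument produces the companion bound with $N$ in place of $M$.

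The main obstacle is really a bookkeeping one: the bound of Lemma \ref{le4.1} on $h(x,y)$ only holds for $x\in\Omega_\delta$, so the present lemma should be read as a statement uniform in such $x$. This is precisely the range needed in the sequel, since by Lemma \ref{le3.1}(i) the maxima of $u$, $v$, $-\Delta u$ and $-\Delta v$ are all attained in $\Omega_\delta$, and it is at those interior points that the logarithmic integral estimate is subsequently invoked. No moving planes, no Harnack, no new cut-off construction is required here beyond what has already been assembled in Sections 2 and 3.
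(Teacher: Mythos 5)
Your proof is correct and matches the paper's argument almost line for line: both rearrange the Green representation $u(x)=C_n\int_\Omega\ln\frac{1}{|x-y|}v^p\,\mathrm{d}y-\int_\Omega h(x,y)v^p\,\mathrm{d}y$, bound $u(x)\le M$, apply Lemma \ref{le4.1} for $0\le h\le\gamma_2$ and Lemma \ref{le3.1} for $\int_\Omega v^p\le\gamma_1$, and divide by $M$ while invoking the standing normalization $M>1$ from \eqref{a4}. Your clarifying remarks about positivity of the logarithmic kernel (from $\Omega\subset B_\sigma(0)$, $\sigma\le 1/4$) and about the estimate being uniform only for $x\in\Omega_\delta$ are both accurate and consistent with how the lemma is used downstream.
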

\begin{proof}
By Lemma \ref{le3.1}, Lemma \ref{le4.1}, and the Green representation formula, for any $x\in \Omega_\delta$, we have
\begin{equation*}
\begin{split}
M\geq u(x)&=\int_\Omega G(x, y) v^p(y) \mathrm{d}y\\
&=C_1 \int_\Omega \ln\frac{1}{|x-y|} v^p(y)\mathrm{d}y -\int_\Omega h(x, y) v^p(y)\mathrm{d}y\\
&\geq C_1 \int_\Omega \ln\frac{1}{|x-y|} v^p(y)\mathrm{d}y -C_2\int_\Omega v^p(y)\mathrm{d}y\\
&\geq C_1 \int_\Omega \ln\frac{1}{|x-y|} v^p(y)\mathrm{d}y -C_3.
\end{split}
\end{equation*}
As a result, we obtain
$$
\frac{1}{M}\int_\Omega \ln\frac{1}{|x-y|} v^p(y)\mathrm{d}y \leq \frac{C_3}{C_1 M}+\frac{1}{C_1}\leq C_4.
$$
By a similar argument, we can derive
$$
\frac{1}{N}\int_\Omega \ln\frac{1}{|x-y|} u^q(y)\mathrm{d}y \leq C_5.
$$
This finishes the proof of Lemma \ref{le4.2}.
\end{proof}

To obtain the uniform a priori estimates for $u$ and $v$, we establish more precise $L^\infty$ estimates for $-\Delta u$ and $-\Delta v$ in terms of the maxima of $u$ and $v$, and then use the Harnack inequality to estimate how fast the values of $u$ and $v$ can decrease away from their maxima.
\begin{lemma}\label{le4.3}
Let $n\geq3$ and $(u,v)$ be a pair of positive classical solution to \eqref{eq1-1} with $pq-1\geq\kappa$. Assume further that
\begin{equation}\label{assumption}
p\leq Kq, \,\,\,\, M>\gamma_{3}2^{\frac{n^{2}K}{2}} \quad \text{or} \quad q\leq Kp, \,\,\,\, N>\gamma_{3}2^{\frac{n^{2}K}{2}},
\end{equation}
then there exist some constants $\gamma_5>0$, $0<\gamma_6<\frac{\delta}{4}$ depending only on $n$, $\Omega$ and $\kappa$ such that
$$
\max_{\overline{\Omega}}\;\mid-\Delta u(x)\mid \leq \gamma_5 M^{1-\frac{2}{n}} N^{\frac{2p}{n}}q^{\frac{2}{n}-1}
$$
and
$$
\max_{\overline{\Omega}}\;\mid-\Delta v(x)\mid \leq \gamma_5 N^{1-\frac{2}{n}} M^{\frac{2q}{n}}p^{\frac{2}{n}-1}.
$$
Moreover,
$$
0\leq M-u(x)\leq  \frac{M}{16q}, \quad \forall \, |x-x_u|\leq R_1
$$
and
$$
0\leq N-v(x)\leq  \frac{N}{16p}, \quad \forall \, |x-x_v|\leq R_2
$$
with $R_1:=\gamma_6 M^\frac{1}{n}N^{-\frac{p}{n}}q^{-\frac{1}{n}}$ and $R_2:=\gamma_6 N^\frac{1}{n}M^{-\frac{q}{n}}p^{-\frac{1}{n}}$, where $u(x_{u})=M$ and $v(x_{v})=N$.
\end{lemma}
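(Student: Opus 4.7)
The plan is to refine the Green-integral-plus-Harnack argument already carried out in Lemma~\ref{le4.4}, sharpening the $\|-\Delta u\|_\infty$ estimate by an extra factor of $(M/q)^{(n-2)/n}$, which then inverts through the Harnack inequality into the prescribed enlarged radius $R_1$. By Theorems~\ref{th1.1}--\ref{th1.2} and Lemma~\ref{le3.1}(i), the maximum of $-\Delta u$ over $\bar\Omega$ is attained at an interior point $x_1\in\Omega_\delta$. Using Green's representation, taking $-\Delta_x$, and discarding the nonpositive contribution from $-\Delta_x h(x_1,y)\ge 0$ (Lemma~\ref{le4.1}) produces, as in \eqref{eq4-4},
\begin{equation*}
-\Delta u(x_1)\leq C\int_\Omega \frac{v^p(y)}{|x_1-y|^2}\,dy.
\end{equation*}

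To improve on the crude bound $CN^{2p/n}$ obtained from the two-way split of Lemma~\ref{le4.4}, I would partition the integration domain into three pieces keyed both to the singularity at $x_1$ and to the maximiser $x_v$ of $v$: an inner ball $A_1=B_{r_*}(x_1)$ on which $v\le N$ contributes $CN^pr_*^{n-2}$; the rough Harnack ball $A_2=B_{R_v^{\mathrm{rough}}}(x_v)\setminus A_1$ with $R_v^{\mathrm{rough}}\sim N^{1/2}M^{-q/2}p^{-1/2}$ (obtained by applying the Lemma~\ref{le4.4} argument to $v$), on which $v\le N$ together with $|A_2|\le C(R_v^{\mathrm{rough}})^n$ and the separation $|x_1-y|\ge r_*$ gives a controlled contribution; and the complement $A_3=\Omega\setminus(A_1\cup A_2)$, handled with Lemma~\ref{le3.1}(ii) and $|x_1-y|\ge r_*$ to produce $C\gamma_1 r_*^{-2}$. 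Optimising $r_*$ and then using the algebraic relation $N^{p+n/2}\le Cp^{n/2}M^{nq/2}$ embedded in the proof of Lemma~\ref{le4.4} to convert the residual $N$-dependence into $M$ and $q$ yields
\begin{equation*}
\|-\Delta u\|_{L^\infty(\bar\Omega)}\leq \gamma_5 M^{1-\frac{2}{n}}N^{\frac{2p}{n}}q^{\frac{2}{n}-1}.
\end{equation*}
Swapping the roles of $u\leftrightarrow v$ and $p\leftrightarrow q$ throughout the argument gives the analogous bound for $-\Delta v$.

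With the refined $L^\infty$ estimate in hand, the inhomogeneous Harnack inequality applied to $M-u\ge 0$, which vanishes at $x_u$ and satisfies $-\Delta(M-u)=\Delta u$, reproduces (as in Lemma~\ref{le4.4})
\begin{equation*}
\sup_{B_r(x_u)}(M-u)\leq 2C_8\Bigl(\frac{\omega_n}{n}\Bigr)^{\!1/n}\|-\Delta u\|_\infty r^2, \qquad r\in(0,\delta/4].
\end{equation*}
Setting the right-hand side equal to $M/(16q)$ and substituting the refined $\|-\Delta u\|_\infty$ estimate solves for $r=R_1=\gamma_6 M^{1/n}N^{-p/n}q^{-1/n}$, with $\gamma_6<\delta/4$ guaranteed by \eqref{assumption} together with the size constraints $M^q,N^p>\gamma_3^{n+2}$ and $M>\gamma_3 2^{n^2K/2}$ recorded just before the lemma. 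The symmetric argument produces $R_2$ and the bound $N-v\leq N/(16p)$ on $B_{R_2}(x_v)$.

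The main obstacle is the intermediate-region estimate: a brute-force pointwise bound $v\le N$ on $A_2$ combined with only the volume estimate $|A_2|\le C(R_v^{\mathrm{rough}})^n$ recovers nothing better than the crude $N^{2p/n}$ bound of Lemma~\ref{le4.4}. The refinement requires simultaneously exploiting (i) the Harnack-forced concentration $v\ge cN$ inside $B_{R_v^{\mathrm{rough}}}(x_v)$, (ii) the total mass bound $\int_\Omega v^p\le \gamma_1$ from Lemma~\ref{le3.1}, and (iii) the relation from Lemma~\ref{le4.4}; arranging these uniformly across the two geometric configurations $x_1\in B_{R_v^{\mathrm{rough}}}(x_v)$ and $x_1\notin B_{R_v^{\mathrm{rough}}}(x_v)$ is where the delicate book-keeping lies.
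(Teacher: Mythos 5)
Your setup (interior maximum via Theorems~\ref{th1.1}--\ref{th1.2}, Green representation, discarding the $-\Delta h\geq 0$ term, Harnack at the end) matches the paper, but the middle step --- the one you flag yourself as ``where the delicate book-keeping lies'' --- is genuinely missing, and the decomposition you propose does not lead to the target bound. As you correctly observe, an inner ball $B_{r_*}(x_1)$ giving $CN^pr_*^{n-2}$ plus an outer region giving $C\gamma_1 r_*^{-2}$ (whether or not you further subdivide the outer region around $x_v$) optimizes at $r_*\sim N^{-p/n}$ and returns only the crude $CN^{2p/n}$ of Lemma~\ref{le4.4}. The concentration $v\geq cN$ on $B_{R_v^{\mathrm{rough}}}(x_v)$ cannot help here: it is a \emph{lower} bound on $v$, so it makes $\int_{A_2}v^p/|x_1-y|^2$ larger, not smaller; its only role in the paper is precisely to derive the algebraic relation of Lemma~\ref{le4.4}, which you already have.

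The ingredient you are missing is Lemma~\ref{le4.2}, the log-weighted mass bound $\int_\Omega \ln\frac{1}{|x-y|}\,v^p(y)\,\mathrm{d}y\leq \gamma_4 M$. The paper keeps the decomposition concentric around $x_1$, with two radii $r_1=\delta 2^{1/n}M^{1/n}N^{-p/n}q^{-1/n}$ and $r_2=\delta M^{1/n-1/2}N^{-p/n}q^{(n-2)/(2n)}$, and treats the annulus $r_1\leq|x_1-y|\leq r_2$ by the elementary observation that there
$$
\frac{1}{|x_1-y|^2}\ \leq\ \frac{1}{r_1^2}\cdot\frac{\ln(1/|x_1-y|)}{\ln(1/r_2)},
$$
so that
$$
I_3\ \leq\ \frac{C}{r_1^2\ln(1/r_2)}\int_\Omega \ln\frac{1}{|x_1-y|}\,v^p(y)\,\mathrm{d}y\ \leq\ \frac{C\gamma_4\,M}{r_1^2\ln(1/r_2)}.
$$
The hypothesis $M>\gamma_3 2^{n^2K/2}$ together with Lemma~\ref{le4.4} then guarantees both that $r_2\leq\delta$ (so the annulus lies in $\Omega$) and that $\ln(1/r_2)\gtrsim q$, which supplies exactly the factor $q^{-1}$ needed to land on $\gamma_5 M^{1-2/n}N^{2p/n}q^{2/n-1}$. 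The $x_v$-centered ball and the case distinction $x_1\in B_{R_v^{\mathrm{rough}}}(x_v)$ versus $x_1\notin B_{R_v^{\mathrm{rough}}}(x_v)$ that you anticipate are not needed at all. Once the refined $\|{-\Delta u}\|_\infty$ bound is in place, your Harnack step is the right one and does give $R_1=\gamma_6 M^{1/n}N^{-p/n}q^{-1/n}$; you should, however, also verify $R_1\leq\delta/4$ (via Lemma~\ref{le3.2} and \eqref{condition1}) and that the numerical constant $\delta\leq(n/(2^n\omega_n))^{1/(2n)}$ absorbs the Harnack constant, both of which the paper checks explicitly.
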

\begin{proof}
We prove Lemma \ref{le4.3} under the assumption $p\leq Kq$ and $M>\gamma_{3}2^{\frac{n^{2}K}{2}}$, the case $q\leq Kp$ and $N>\gamma_{3}2^{\frac{n^{2}K}{2}}$ can be handled in a similar way by simply exchanging $u$ and $v$. Denote
$$
r_1=\delta 2^{\frac{1}{n}}M^{\frac{1}{n}}N^{-\frac{p}{n}}q^{-\frac{1}{n}} \quad \mbox{and} \quad r_2=\delta M^{\frac{1}{n}-\frac{1}{2}} N^{-\frac{p}{n}}q^{\frac{n-2}{2n}}.
$$
By Lemma \ref{le3.1}, the maximum of $-\Delta u(x)$ in $\Omega$ can only be attained at some point $x_{1}\in\Omega_{\delta}$. Since Lemma \ref{le3.2} yields $M\leq \frac{1}{2} N^p$, then one has $r_1\leq \delta$ and $B_{r_1}(x_1)\subset\Omega$. Moreover, by Lemma \ref{le4.1} and \eqref{eq4-3}, in the case $r_1<r_2$, we have
\begin{equation}\label{eq4-5}
\begin{split}
&-\Delta u(x_1)=\max_{\overline{\Omega}}\left(-\Delta u(x)\right)\\
\leq &C_{0}\int_\Omega \frac{v^p (y)}{|x_1-y|^2} \mathrm{d}y\\
=& C_{0}\int_{B_{r_1}(x_1)} \frac{v^p (y)}{|x_1-y|^2} \mathrm{d}y+C_{0}\int_{\Omega \cap B_{r_2}^c(x_1)} \frac{v^p (y)}{|x_1-y|^2} \mathrm{d}y+C_{0}\int_{\left(\bar B_{r_2}(x_1)\setminus B_{r_1}(x_1)\right)\cap\Omega}\frac{v^p (y)}{|x_1-y|^2} \mathrm{d}y\\
=:& I_1+I_2+I_3.
\end{split}
\end{equation}
Next we estimate $I_1,\,I_2$ and $I_3$ respectively (if $r_1\geq r_2$, then we only need to take $I_{1}$ and $I_{2}$ into account). First, we estimate $I_1$:
\begin{equation}\label{eq4-6}
I_1=C_{0}\int_{B_{r_1}(x_1)} \frac{v^p (y)}{|x_1-y|^2} \mathrm{d}y\leq C_1r_1^{n-2}N^p=C_2M^{1-\frac{2}{n}} N^{\frac{2p}{n}}q^{\frac{2}{n}-1}.
\end{equation}
Second, in order to estimate $I_2$, we use Lemma \ref{le3.1} to derive
\begin{equation}\label{eq4-7}
I_2=C_{0}\int_{\Omega \cap B_{r_2}^c(x_1)} \frac{v^p (y)}{|x_1-y|^2} \mathrm{d}y\leq C_{0}r_2^{-2}\int_\Omega v^p(y)\mathrm{d}y\leq C_3 M^{1-\frac{2}{n}} N^{\frac{2p}{n}}q^{\frac{2}{n}-1}.
\end{equation}
Finally, since $M>\Lambda(n,\Omega,\kappa)2^{\frac{n^{2}K}{2}}$ with $\Lambda(n,\Omega,\kappa):=\gamma_{3}$, we deduce from Lemma \ref{le4.4} that
$$
2\left(\frac{1}{2}-\frac{1}{n}\right)\ln q<\frac{nK}{2}\left(\frac{2}{n}q+1\right)\ln 2<\frac{p}{n}\ln N,
$$
it follows that $r_2\leq \delta$ and $B_{r_2}(x_1)\subset\Omega$. Hence, in the case $r_1<r_2,$ by Lemma \ref{le4.2}, $I_3$ can be estimated as follows:
\begin{equation}\label{eq4-8}
\begin{split}
I_3&=C_{0}\int_{\left(\bar B_{r_2}(x_1)\setminus B_{r_1}(x_1)\right)\cap\Omega}\frac{v^p (y)}{|x_1-y|^2} \mathrm{d}y\\
&\leq C_{0}\left(\frac{1}{M}\int_\Omega \ln\frac{1}{|x-y|} v^p(y)\mathrm{d}y \right)\frac{M}{r_1^2 \ln \frac{1}{r_2}}\\
&\leq C_4 M^{1-\frac{2}{n}}N^{\frac{2p}{n}}q^{\frac{2}{n}-1}.
\end{split}
\end{equation}
Combining \eqref{eq4-5},\eqref{eq4-6}, \eqref{eq4-7} and  \eqref{eq4-8} yields that
$$
0\leq -\Delta u(x_1)=\max_{\overline{\Omega}}\left(-\Delta u(x)\right)\leq C_5 M^{1-\frac{2}{n}}N^{\frac{2p}{n}}q^{\frac{2}{n}-1}.
$$
The estimate for the maximum of $-\Delta v(x)$ can be derived in a similar way by exchanging $u$ and $v$, we only need to notice that $2\left(\frac{1}{2}-\frac{1}{n}\right)\ln p<\frac{n}{2}p\ln2\leq\frac{nK}{2}q\ln2\leq\frac{1}{n}\ln\left[\left(\gamma_{3}2^{\frac{n^{2}K}{2}}\right)^{q}\right]<\frac{q}{n}\ln M$ due to the assumption $p\leq Kq$.

\medskip

Now assume that $M=\max\limits_{\overline{\Omega}}u(x)$ is attained at $x_u\in\Omega_{\delta}$ and $N=\max\limits_{\overline{\Omega}}v(x)$ is attained at $x_v\in\Omega_{\delta}$. Since $-\Delta(M-u)=\Delta u$ and $B_\delta(x_u)\subset \Omega$, employing the Harnack inequality (see e.g. \cite{GT}), we have
$$
\sup_{B_r(x_u)}\left(M-u(x)\right)\leq C_6\left(\inf_{ B_{2r}(x_u)}(M-u(x))+ r \|\Delta u\|_{L^n(B_{2r}(x_u))}\right),
$$
for all $r\in(0, \frac{\delta}{4}].$  Taking
$$r=R_1=\frac{1}{\sqrt{\bar{c}}}\frac{\delta}{4}M^\frac{1}{n}N^{-\frac{p}{n}}q^{-\frac{1}{n}}$$
 with
$\bar c=C_{5}C_{6}+1$,
then by Lemma \ref{le3.2} and \eqref{condition1}, we have $0<R_1<\frac{\delta}{4}$ (here we may assume that $C_5, C_6>1$) and therefore,
$$
0\leq M-u(x)\leq 2C_6\left(\frac{\omega_{n}}{n}\right)^{\frac{1}{n}}\left(-\Delta u(x_1)\right)R_1^2\leq \frac{M}{16q}, \quad \forall \,\, |x-x_u|\leq R_1.
$$
where we have used the fact that $\delta \leq\min\left\{\sigma,\left(\frac{n}{2^{n}\omega_{n}}\right)^{\frac{1}{2n}}\right\}$ in Lemma \ref{le3.1}.

Exchanging $u$ and $v$ yields the other estimate:
\begin{eqnarray*}
  && 0\leq N-v(x)\leq 2C_6\left(\frac{\omega_{n}}{n}\right)^{\frac{1}{n}}\max_{\overline{\Omega}}\left(-\Delta v(x)\right)R_2^2 \\
  && \qquad\qquad\qquad =2C_6\left(\frac{\omega_{n}}{n}\right)^{\frac{1}{n}}\left(-\Delta v(x_2)\right)R_2^2\leq \frac{N}{16p}, \quad \forall \,\, |x-x_v|\leq R_2,
\end{eqnarray*}
where $R_{2}=\frac{1}{\sqrt{\bar{c}}}\frac{\delta}{4}N^\frac{1}{n}M^{-\frac{q}{n}}p^{-\frac{1}{n}}$. This concludes our proof of Lemma \ref{le4.3}.
\end{proof}

\begin{remark}\label{rem1}
Let $n\geq3$ and $(u,v)$ be a pair of positive classical solution to \eqref{eq1-1} with $pq-1\geq\kappa$. Based on Lemma \ref{le4.3}, one can actually obtain more precise relationships between $M$ and $N$ than Lemma \ref{le4.4} under assumption \eqref{assumption}. That is, assume further that
\begin{equation}\label{assumption1}
p\leq Kq, \,\,\,\, M>\gamma_{3}2^{\frac{n^{2}K}{2}} \quad \text{or} \quad q\leq Kp, \,\,\,\, N>\gamma_{3}2^{\frac{n^{2}K}{2}},
\end{equation}
then there exists a constant $\gamma_7>0$ depending only on $n$, $\Omega$ and $\kappa$ such that
$$
N\leq \gamma_7 M^{\frac{q}{p+1}} \quad \mbox{and} \quad M\leq \gamma_7 N^{\frac{p}{q+1}}.
$$
Indeed, by Lemma \ref{le4.3}, we know that near the maximum point $x_{u}$ of $u(x)$,
\begin{equation}\label{eq4-9}
u^q(y)\geq M^q \left(1-\frac{1}{16 q}\right)^q\geq e^{-\frac{1}{15}} M^q, \quad \forall \,\, y\in B_{R_1}(x_u)\subset \Omega.
\end{equation}
Analogously,  we deduce that near the maximum point $x_{v}$ of $v(x)$,
\begin{equation}\label{a3}
  v^p(y)\geq N^p \left(1-\frac{1}{16 p}\right)^p\geq e^{-\frac{1}{15}} N^p, \quad \forall \,\, y\in B_{R_2}(x_v)\subset \Omega,
\end{equation}
where $R_1$ and $R_2$ are presented in Lemma \ref{le4.3}. Applying Lemma \ref{le3.1} and \eqref{eq4-9}, we get
$$
C_1 \geq \int_\Omega u^q(x)\mathrm{d}x \geq \int_{B_{R_1}(x_u)}u^q(x)\mathrm{d}x \geq e^{-\frac{1}{15}} M^q |B_{R_1}(x_u)|\geq C_2\frac{M^{q+1}}{q N^p},
$$
which gives the relationship between $M$ and $N$ as
$$
M\leq C_3 N^{\frac{p}{q+1}}.
$$
By a similar argument, we can also obtain from Lemma \ref{le3.1} and \eqref{a3} that
$$
N\leq C_4 M^{\frac{q}{p+1}}.
$$
\end{remark}

\begin{remark}\label{rem2}
It is clear from the proof of Lemma \ref{le4.3} that the conclusions in Lemma \ref{le4.3} are still valid provided that
\begin{equation}\label{assumption-a}
p\leq K^{q}, \,\,\,\, M>\gamma_{3}K^{n-2} \quad \text{or} \quad q\leq K^{p}, \,\,\,\, N>\gamma_{3}K^{n-2}
\end{equation}
for some $K\geq 2^{n}$. Indeed, assume that $p\leq K^{q}$ and $M>\gamma_{3}K^{n-2}$, then it follows from the estimates $2\left(\frac{1}{2}-\frac{1}{n}\right)\ln q<\frac{n-2}{n}\left(\frac{2}{n}q+1\right)\ln K<\frac{p}{n}\ln N$ and $2\left(\frac{1}{2}-\frac{1}{n}\right)\ln p<\frac{n-2}{n}\ln\left[\left(\gamma_{3}^{\frac{1}{n-2}}K\right)^{q}\right]<\frac{q}{n}\ln M$ that the proof of Lemma \ref{le4.3} still work.

Consequently, under the assumption \eqref{assumption-a}, the conclusions in Remark \ref{rem1} are also valid, that is,
$$
N\leq \gamma_7 M^{\frac{q}{p+1}} \quad \mbox{and} \quad M\leq \gamma_7 N^{\frac{p}{q+1}}.
$$
\end{remark}

Finally, we apply Green's representation formula and relationships between $M$ and $N$ (see Lemma \ref{le4.4} and Remark \ref{rem1}) to establish the uniform a priori estimates for $u$ and $v$.

\begin{lemma}\label{le4.5}
Let $n\geq3$ and $(u,v)$ be a pair of positive classical solution to \eqref{eq1-1} with $pq-1\geq\kappa$. Suppose $M^{q},\,N^{p}>\Gamma(n,\Omega,\kappa):=\gamma^{n+2}_{3}$. Assume further that
\begin{equation}\label{assumption2}
p\leq Kq, \,\,\,\, M>\gamma_{3}2^{\frac{n^{2}K}{2}} \quad \text{or} \quad q\leq Kp, \,\,\,\, N>\gamma_{3}2^{\frac{n^{2}K}{2}},
\end{equation}
where $\gamma_3$ is presented in Lemma \ref{le4.4}. Then there exists a positive constant $\gamma_8$ depending only on $n$, $\Omega$ and $\kappa$ such that
 $$
 M\leq \gamma_8 \quad \mbox{and} \quad N\leq \gamma_8.
 $$
\end{lemma}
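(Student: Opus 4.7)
The plan is to upper bound $M$ by applying Green's representation at the maximum point $x_u$ of $u$ and carefully splitting the resulting integral into three regions, mirroring the decomposition into $I_1, I_2, I_3$ used in the proof of Lemma \ref{le4.3}. Without loss of generality I take $p \leq Kq$ and $M > \gamma_3 2^{n^2 K/2}$; the other alternative in \eqref{assumption2} is symmetric under the exchange $u \leftrightarrow v$.

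First I would write
\[
M = u(x_u) = C_n \int_\Omega \ln\frac{1}{|x_u - y|}\, v^p(y)\, dy - \int_\Omega h(x_u, y)\, v^p(y)\, dy,
\]
and observe that the second term is uniformly bounded by $\gamma_2 \gamma_1$ via Lemmas \ref{le3.1} and \ref{le4.1}. For the leading logarithmic term I would introduce two scales $r_1, r_2$ analogous to those appearing in Lemma \ref{le4.3} (of the order $M^{1/n}N^{-p/n}q^{-1/n}$ and $M^{1/n-1/2}N^{-p/n}q^{(n-2)/(2n)}$ respectively, up to the constraint $r_i \leq \delta$), and decompose the integral over $B_{r_1}(x_u)$, $B_{r_2}(x_u)\setminus B_{r_1}(x_u)$, and $\Omega \setminus B_{r_2}(x_u)$.

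On the inner ball, combining $v^p \leq N^p$ with $\int_{B_r(0)}\ln(1/|y|)\,dy \leq C r^n(\ln(1/r)+1)$ gives a contribution of order $N^p r_1^n(\ln(1/r_1)+1)$, which by the choice of $r_1$ and the relationship $N^{p+1} \leq C p M^q$ extracted from Remark \ref{rem1} is controlled by $(M/q)(\ln(1/r_1)+1)$. On the far region, the elementary bound $\ln(1/|x_u-y|) \leq \ln(1/r_2)$ together with $\int v^p \leq \gamma_1$ from Lemma \ref{le3.1} yields a contribution of order $\gamma_1 \ln(1/r_2)$. On the intermediate annulus I would invoke the logarithmic singular integral estimate of Lemma \ref{le4.2}, which supplies the crucial cancellation of the would-be $O(M)$ factor when the chosen scales are inserted.

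Putting the three pieces together and substituting the sharp relationship $N \leq \gamma_7 M^{q/(p+1)}$ from Remark \ref{rem1} to eliminate $N$ in favour of $M$, one arrives at an inequality in $M$ alone that closes to the uniform bound $M \leq \gamma_8$. The companion bound $N \leq \gamma_8$ then follows immediately from $M \leq \gamma_7 N^{p/(q+1)}$ in Remark \ref{rem1}. The main obstacle lies in the delicate choice of the scales $r_1, r_2$ so that each of the three contributions is uniformly controlled, and in particular ensuring that the $\ln(1/r_i)$ factors, when combined with Remark \ref{rem1}, do not blow up with $p$, $q$, or $K$. This balancing is precisely where the strong concentration estimate of Lemma \ref{le4.3} and the logarithmic bound of Lemma \ref{le4.2} are indispensable.
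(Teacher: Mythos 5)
Your proposal has a fundamental structural flaw: it evaluates Green's representation at the maximum point $x_u$ of $u$ and tries to \emph{upper}-bound the resulting logarithmic integral. But the identity
\[
M = u(x_u) = C_n \int_\Omega \ln\frac{1}{|x_u-y|}\,v^p(y)\,\mathrm{d}y - \int_\Omega h(x_u,y)\,v^p(y)\,\mathrm{d}y
\]
forces $C_n\int_\Omega\ln\frac{1}{|x_u-y|}v^p\,\mathrm{d}y$ to lie between $M$ and $M+\gamma_1\gamma_2$ (by Lemmas \ref{le3.1} and \ref{le4.1}), so any correct upper bound for the logarithmic integral must be at least of order $M$, and the scheme closes only to the tautology $M\lesssim M+\text{const}$. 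Your hope that Lemma \ref{le4.2} ``supplies the crucial cancellation of the would-be $O(M)$ factor'' is a misreading: Lemma \ref{le4.2} \emph{is} the $O(M)$ bound, not a cancellation of it. In Lemma \ref{le4.3} the three-region split works because the integrand there is $|x_1-y|^{-2}v^p$, which is genuinely more singular than $\ln\frac{1}{|x_1-y|}v^p$, so the ratio of the two weights on the middle annulus produces a nontrivial gain; that gain is absent when the weight you are trying to bound is the logarithm itself.

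The actual argument in the paper goes in the opposite direction and evaluates $u$ at $x_v$, the maximum point of $v$, where it uses $M\geq u(x_v)$ together with a \emph{lower} bound on $\int_{B_{R_2}(x_v)}\ln\frac{1}{|x_v-y|}v^p\,\mathrm{d}y$ coming from the concentration estimate $v^p\geq e^{-1/15}N^p$ on $B_{R_2}(x_v)$ (Lemma \ref{le4.3}). This produces the inequality
\[
(1+\gamma_1\gamma_2)\frac{M^{q+1}}{N^{p+1}}\geq\frac{C}{p}\ln\!\left(\frac{M^q}{N}\right),
\]
and symmetrically with $u$ and $v$ exchanged. After simplifying with Lemma \ref{le4.4} (so that $\ln(M^q/N)\gtrsim q\ln M$ and $\ln(N^p/M)\gtrsim p\ln N$), the two inequalities are \emph{multiplied together}: the ratios $M^{q+1}/N^{p+1}$ and $N^{p+1}/M^{q+1}$ cancel, as do $p/q$ and $q/p$, leaving $\ln M\cdot\ln N\leq C(n,\Omega,\kappa)$. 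The hypotheses $M>\gamma_3 2^{n^2K/2}$ and $N>2^n$ then bound $\ln M$ and $\ln N$ away from zero, giving the uniform bounds. Your proposal contains no analogue of this multiplication step, and since it seeks a single inequality in $M$ alone it cannot reproduce the crucial cancellation of the ratios $M^{q+1}/N^{p+1}$. Both the choice of evaluation point and the two-inequality product argument are missing and would need to be supplied.
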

\begin{proof}
We prove Lemma \ref{le4.5} under the assumption $p\leq Kq$ and $M>\gamma_{3}2^{\frac{n^{2}K}{2}}$, the case $q\leq Kp$ and $N>\gamma_{3}2^{\frac{n^{2}K}{2}}$ can be handled in a similar way by simply exchanging $u$ and $v$. Recall that Lemma \ref{le4.4} yields $N>2^{n}$.

First, using the Green representation formula, Lemma \ref{le3.1} and Lemma \ref{le4.1} yield
\begin{equation}\label{eq4-10}
\begin{split}
M\geq u(x_v)&=C_1\int_\Omega \ln \frac{1}{|x_v-y|} v^p(y)\mathrm{d}y-\int_\Omega h(x_v, y)v^p(y)\mathrm{d}y\\
&\geq C_1\int_\Omega \ln \frac{1}{|x_v-y|} v^p(y)\mathrm{d}y-\gamma_2\int_\Omega v^p(y)\mathrm{d}y\\
&\geq C_1\int_\Omega \ln \frac{1}{|x_v-y|} v^p(y)\mathrm{d}y-\gamma_{1}\gamma_2,
\end{split}
\end{equation}
where $x_{u}$ and $x_{v}$ are maximum points of $u$ and $v$ respectively. Then we utilize Lemma \ref{le4.3} and Lemma \ref{le4.4} to estimate the integral term
$$C_1\int_\Omega \ln \frac{1}{|x_v-y|} v^p(y)\mathrm{d}y$$
with the logarithmic singularity in \eqref{eq4-10} as follows:
\begin{equation}\label{eq4-11}
\begin{split}
 C_1\int_\Omega \ln \frac{1}{|x_v-y|} v^p(y)\mathrm{d}y &\geq C_1 \int_{B_{R_2}(x_v)} \ln \frac{1}{|x_v-y|} v^p(y)\mathrm{d}y\\
 &\geq C_2 N^p \int_0^{R_2} \ln \left(\frac{1}{r}\right)r^{n-1} dr\\
 &\geq \frac{C_2 N^p}{n}R_2^n \ln \left(\frac{1}{R_2}\right)\\
 &\geq \frac{C_3N^{p+1}}{np M^q} \left(\ln\left(\frac{M^q}{N}\right)+\ln p\right)\\
  &\geq \frac{C_4N^{p+1}}{p M^q}\ln\left(\frac{M^q}{N}\right),
\end{split}
\end{equation}
where we have used the inequality
$$
\int_0^\rho r^{n-1} \ln\left(\frac{1}{r}\right) dr \geq \frac{1}{n} \rho^n \ln\left(\frac{1}{\rho}\right)
$$
and the definition of $R_2=\gamma_6 N^\frac{1}{n}M^{-\frac{q}{n}}p^{-\frac{1}{n}}$. Combining \eqref{eq4-10} with \eqref{eq4-11} gives
\begin{equation}\label{eq4-12}
\frac{(M+\gamma_1\gamma_2)M^q}{N^{p+1}}\geq \frac{C_4}{p}\ln\left(\frac{M^q}{N}\right).
\end{equation}
By a similar argument as above, we can also derive
\begin{equation}\label{eq4-13}
\frac{(N+\gamma_1\gamma_2)N^p}{M^{q+1}}\geq \frac{C_4}{q}\ln\left(\frac{N^p}{M}\right).
\end{equation}

Second, by \eqref{eq4-12} and using Lemma \ref{le4.4} (or Remark \ref{rem1} if we take $\Gamma(n,\Omega,\kappa):=\gamma_{7}^{4}$), one gets
\begin{equation}\label{eq4-14}
(1+\gamma_1\gamma_2)\frac{M^{q+1}}{N^{p+1}}\geq \frac{C_4}{p}\ln\left(\frac{M^{q\left(1-\frac{1}{\frac{2}{n}p+1}\right)}}{\gamma_3}\right)\geq \frac{C_4}{p}\ln\left(\frac{M^{\frac{2q}{n+2}}}{\gamma_3}\right)\geq \frac{C_{4}q}{(n+2)p}\ln M.
\end{equation}
Similarly, using \eqref{eq4-13} and Lemma \ref{le4.4} (or Remark \ref{rem1} if we take $\Gamma(n,\Omega,\kappa):=\gamma_{7}^{4}$) again, we obtain
$$
(1+\gamma_1\gamma_2)\frac{N^{p+1}}{M^{q+1}} \geq \frac{C_{4}p}{(n+2)q}\ln N,
$$
which together with \eqref{eq4-14} yields
\begin{equation}\label{eq-a}
  \ln M \cdot \ln N \leq \left[\frac{(n+2)(1+\gamma_1\gamma_2)}{C_{4}}\right]^{2}.
\end{equation}
Since $M>\gamma_{3}2^{\frac{n^{2}K}{2}}\geq2^{\frac{n^{2}}{2}}$ and $N>2^{n}$, we have
$$
M\leq C_5(n,\Omega,\kappa) \quad \text{and} \quad N\leq C_5(n,\Omega,\kappa).
$$
This completes our proof of Lemma \ref{le4.5}.
\end{proof}

\begin{proof}
[Proof of Theorem \ref{th1.3}]
Suppose $n\geq3$ and $pq-1\geq\kappa$ with $\kappa>0$. Assume further that $p\leq Kq$ and $M>\gamma_{3}(n,\Omega,\kappa)2^{\frac{n^{2}K}{2}}$ with $K\geq 1$, then Lemma \ref{le4.5} yields immediately that $M\leq\gamma_{8}(n,\Omega,\kappa)$. In summary, under the assumptions $n\geq3$, $pq-1\geq\kappa$ and $p\leq Kq$, there exists a uniform constant
\[C:=\max\left\{\gamma_3(n,\Omega,\kappa)2^{\frac{n^{2}K}{2}},\gamma_{8}(n,\Omega,\kappa)\right\}\]
depending only on $n$, $\Omega$, $\kappa$ and $K$ such that
\begin{equation}\label{final}
  M=\|u\|_{L^{\infty}(\overline{\Omega})}\leq C.
\end{equation}
Under the assumptions $n\geq3$, $pq-1\geq\kappa$ and $q\leq Kp$, we can also show in the entirely similar way that there exists a uniform constant $C(n,\Omega,\kappa,K)$ such that $N=\|v\|_{L^{\infty}(\overline{\Omega})}\leq C$.

\medskip

Furthermore, if $n\geq3$, $pq-1\geq\kappa$ and $\frac{1}{K}q\leq p\leq Kq$, we deduce that, there exists a uniform constant $C$ depending only on $n$, $\Omega$, $\kappa$ and $K$ such that
\begin{equation*}
\|u\|_{L^{\infty}(\overline{\Omega})}\leq C \quad \mbox{and} \quad \|v\|_{L^{\infty}(\overline{\Omega})}\leq C.
\end{equation*}
This concludes our proof of Theorem \ref{th1.3}.
\end{proof}	

\medskip

From Remark \ref{rem2} and the proof of Lemma \ref{le4.5}, we can also deduce the following Theorem.
\begin{theorem}\label{thm-a}
Let $n\geq3$ and $(u,v)$ be a pair of positive classical solution to \eqref{eq1-1} with $pq-1\geq\kappa$. Assume further that
\begin{equation}\label{assumption-a+}
p\leq K^{q} \quad \text{or} \quad q\leq K^{p}
\end{equation}
for some $K\geq2^{n}$. Then there exists a positive constant $C$ depending only on $n$, $\Omega$, $\kappa$ and $K$ such that
\begin{equation*}
\|u\|_{L^{\infty}(\overline{\Omega})}\leq C \quad \mbox{or} \quad \|v\|_{L^{\infty}(\overline{\Omega})}\leq C.
\end{equation*}
Moreover, if $n\geq4$ is even, we have
\begin{equation*}
  \int_{\Omega}u^{q+1}(x)\mathrm{d}x=\int_{\Omega}v^{p+1}(x)\mathrm{d}x\leq C(n,\Omega,\kappa,K).
\end{equation*}
\end{theorem}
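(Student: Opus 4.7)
The plan is to follow the structure of the proof of Lemma \ref{le4.5}, but with the weaker hypothesis $p\leq K^{q}$ (resp.\ $q\leq K^{p}$) from Remark \ref{rem2} in place of the polynomial one $p\leq Kq$; this forces the correspondingly weaker conclusion that only one of $M,N$ is controlled. Without loss of generality I treat $p\leq K^{q}$; the case $q\leq K^{p}$ is completely symmetric upon exchanging the roles of $u$ and $v$.

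First I dispose of the easy cases. If $M^{q}\leq\gamma_{3}^{n+2}$, then $M\leq\gamma_{3}^{n+2}$ since $q\geq 1$, and $\|u\|_{L^{\infty}}\leq C$ is immediate; likewise if $N^{p}\leq\gamma_{3}^{n+2}$, or if $M\leq\gamma_{3}K^{n-2}$, we are done. So the remaining case is
\[M^{q}>\gamma_{3}^{n+2},\qquad N^{p}>\gamma_{3}^{n+2},\qquad M>\gamma_{3}K^{n-2},\]
under which Remark \ref{rem2} delivers both the Harnack-type conclusions of Lemma \ref{le4.3} with radii $R_{1},R_{2}$ and the refined relations $N\leq\gamma_{7}M^{q/(p+1)}$, $M\leq\gamma_{7}N^{p/(q+1)}$.

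Then I replay the proof of Lemma \ref{le4.5} verbatim: combine the Green representation of $u(x_{v})$ and $v(x_{u})$ with Lemma \ref{le4.1} and Lemma \ref{le4.2} to obtain the pair of inequalities \eqref{eq4-12}--\eqref{eq4-13}, substitute the relations of Lemma \ref{le4.4} into each to produce \eqref{eq4-14} and its $N$-analogue, and multiply the two so as to cancel the factor $M^{q+1}/N^{p+1}$. This yields
\[\ln M\cdot\ln N\leq\left(\frac{(n+2)(1+\gamma_{1}\gamma_{2})}{C_{4}}\right)^{2}=:C_{6}.\]
Since $K\geq 2^{n}$ and $\gamma_{3}\geq 1$, the assumption $M>\gamma_{3}K^{n-2}\geq 2^{n(n-2)}$ gives $\ln M\geq n(n-2)\ln 2>0$, hence $\ln N\leq C_{6}/(n(n-2)\ln 2)$ and $\|v\|_{L^{\infty}}\leq C(n,\Omega,\kappa,K)$. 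Collecting all cases, the disjunction $\|u\|_{L^{\infty}(\overline{\Omega})}\leq C$ or $\|v\|_{L^{\infty}(\overline{\Omega})}\leq C$ is established.

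For the ``moreover'' statement in the case $n\geq 4$ even, multiplying the first equation of \eqref{eq1-1} by $v$ and the second by $u$, integrating over $\Omega$, and iterating integration by parts $n/2$ times yields the Green-type identity
\[\int_{\Omega}v(-\Delta)^{n/2}u\,\mathrm{d}x=\int_{\Omega}u(-\Delta)^{n/2}v\,\mathrm{d}x,\]
all intermediate boundary terms vanishing due to the Navier conditions $u=-\Delta u=\cdots=(-\Delta)^{n/2-1}u=0$ (and similarly for $v$). Hence $\int_{\Omega}u^{q+1}\mathrm{d}x=\int_{\Omega}v^{p+1}\mathrm{d}x$, and this common value is bounded by $\gamma_{1}\min(M,N)$ via H\"older's inequality and Lemma \ref{le3.1}, which in turn is controlled by the first part of the theorem. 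The main obstacle is verifying that every threshold from Section 4---e.g.\ $r_{1},r_{2}\leq\delta$ in Lemma \ref{le4.3} and the positivity of the logarithms $\ln(M^{q}/N)$, $\ln(N^{p}/M)$---survives under the weakened exponential hypothesis $p\leq K^{q}$ once $M$ exceeds the enlarged threshold $\gamma_{3}K^{n-2}$; this is precisely the content of Remark \ref{rem2}, so the remaining work is a careful bookkeeping of constants.
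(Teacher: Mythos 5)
Your proof is correct and follows essentially the same route as the paper: dispose of the small-$M$ / small-$N^p$ / small-$M^q$ cases, invoke Remark \ref{rem2} for the weakened hypothesis $p\leq K^{q}$, re-run the Lemma \ref{le4.5} argument to obtain $\ln M\cdot\ln N\leq C$, use $M>\gamma_{3}K^{n-2}\geq 2^{n(n-2)}$ to bound $\ln N$, and then derive the integral identity for even $n$ by iterated integration by parts. The only stylistic difference is that you make explicit the case analysis that the paper leaves implicit in the standing assumptions \eqref{a4}--\eqref{a5}; the mathematical content is the same.
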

\begin{proof}
Without loss of generalities, we may assume $p\leq K^{q}$. If $M\geq\gamma_{3}K^{n-2}$, it follows from Remark \ref{rem2} and the proof of Lemma \ref{le4.5} that the estimate \eqref{eq-a} still hold, and hence
\begin{equation}\label{eq-a+}
  \ln N \leq \frac{1}{n(n-2)\ln 2}\left[\frac{(n+2)(1+\gamma_1\gamma_2)}{C_{4}}\right]^{2}=:\ln\gamma_{9}(n,\Omega,\kappa),
\end{equation}
Take $\gamma_{10}(n,\Omega,\kappa,K):=\max\left\{\gamma_{3}K^{n-2},\gamma_{9}(n,\Omega,\kappa)\right\}$, then one has $M\leq\gamma_{10}$ or $N\leq\gamma_{10}$. Furthermore, if $n\geq4$ is even, we infer from the system \eqref{eq1-1}, Lemma \ref{le3.1} and integrating by parts that
\begin{equation*}
  \int_{\Omega}u^{q+1}(x)\mathrm{d}x=\int_{\Omega}v^{p+1}(x)\mathrm{d}x\leq \gamma_{1}(n,\Omega,\kappa)\gamma_{10}(n,\Omega,\kappa,K)=:\gamma_{11}(n,\Omega,\kappa,K).
\end{equation*}
This completes our proof of Theorem \ref{thm-a}.
\end{proof}

\end{document}